\documentclass[10pt]{article}
\synctex=1

%% Usepackages ............................................................. %%
\usepackage{amsmath,amssymb,amsfonts,amsthm,mathtools,xfrac,extarrows,enumitem,bbold}
\usepackage{epstopdf}
\usepackage{graphicx, float}
\usepackage[T1]{fontenc}
\usepackage{amscd}
\usepackage[active]{srcltx}
\usepackage[font={small}]{caption}
\usepackage{epstopdf}
\usepackage[T1]{fontenc}
\usepackage{amscd}
\usepackage{hyperref}
%\usepackage{xr,xr-hyper}
%\usepackage{refcheck}

%% Styling ................................................................................%%
%\numberwithin{equation}{section}
%\textwidth 17cm
%\oddsidemargin -0.3cm
%\addtolength{\textheight}{3cm}
%\addtolength{\topmargin}{-1cm}

%% Newcommands .......................................................................%%
\newcommand{\N}{\mathbb{N}}
\newcommand{\R}{\mathbb{R}}

\def\nlongrightarrow{\;\;\;/\mkern-25mu\longrightarrow}

\def\nLongleftarrow{\quad\;/\mkern-25mu\Longleftarrow \quad}
\def\nLongrightarrow{\quad\;/\mkern-25mu\Longrightarrow \quad}

\newcommand\extrafootertext[1]{%
    \bgroup
    \renewcommand\thefootnote{\fnsymbol{footnote}}%
    \renewcommand\thempfootnote{\fnsymbol{mpfootnote}}%
    \footnotetext[0]{#1}%
    \egroup
}

%% Envirionments ..............................................................................%%
\newtheorem{thm}{Theorem}
\newtheorem{lemma}{Lemma}[section]

\newtheorem{remark}[lemma]{Remark}
\newtheorem{proposition}[lemma]{Proposition}
\newtheorem{definition}[lemma]{Definition}

\title{\vspace{-1.0cm}Prescribing scalar curvatures: on the negative Yamabe case}

\author
{
Martin Mayer
\\
\small{Scuola Superiore Meridionale,
Largo San Marcellino 10, 80138 Napoli, Italia
}
\\
\&
\\
Chaona Zhu
%\\
%\small{
%School of Mathematics and Statistics, Ningbo University} 
%\\
%\small{No. 818, Fenghua Road, Ningbo 315211, P.
%R. China}
\\
\small{
Dipartimento di Matematica dell'Universit\`a degli studi di Roma "Tor Vergata"
}
\\
\small{
Via della Ricerca Scientifica 1, Roma, Italia
}
}

\normalsize

\begin{document}
\maketitle
\begin{abstract}
The problem of prescribing conformally the scalar curvature on a closed Riemannian manifold
of negative Yamabe invariant is always solvable, when the function $K$ to be prescribed is strictly
negative, while sufficient and necessary conditions are known for $K\leq 0$.
For sign changing $K$ Rauzy \cite{Rauzy} showed solvability,
provided $K$ is not too positive.
We revisit this problem in a different variational context,
thereby recovering and quantifying the principal existence result of Rauzy
and show under additional assumptions,
that for a sign changing $K$ solutions to the conformally prescribed scalar curvature problem,
while existing, are not unique.
\end{abstract}

{\footnotesize
\begin{center}
{\it Key Words:}
conformal geometry, scalar curvature, calculus of variations, \\nonlinear analysis
\\
{\it MSC : }
35A15, 35J60, 53C21
\end{center}
}

\tableofcontents

\section{Introduction}\label{Sec_Introduction}
Let $M=(M^n, g_0)$ be a closed Riemannian manifold with $n\geq3$.
We consider the classical conformally prescribed scalar curvature problem,
i.e., given a smooth function $K$ on $M$, we ask
for the existence of a conformal metric $g$ to $g_0$,
whose  scalar curvature is $K$.

If we denote by $g=g_{u}=u^{\frac{4}{n-2}}g_0$ with $u>0$ a conformal metric to $g_{0}$,
this problem is equivalent to finding a positive solution $u>0$ of the equation
\begin{equation}\label{Y}
L_{g_0}u
=
-c_n\Delta_{g_0} u+R_{g_0}u
=
Ku^{\frac{n+2}{n-2}}, \; c_n=\frac{4(n-1)}{n-2}.
\end{equation}
Here $R_{g_0}$ denotes the scalar curvature with respect to the metric $g_0$.
In particular, when $K$ is constant,
\eqref{Y} reduces to the Yamabe problem,
which has been, as is well known, completely solved by the works of Yamabe, Trudinger, Aubin and Schoen.

The prescribed scalar curvature problem for non constant $K$ has been widely studied as well,
especially in case of a positive Yamabe invariant, in particular on the standard sphere $S^{n}$,
see for instance \cite{MM4} and the references therein.
Here we are interested in the case of a negative Yamabe invariant, i.e. when
$$
Y(M)
=
\inf_{\substack{u\in H^{1}(M)\\ u>0}}
\frac
{\int_ML_{g_0}uud\mu_{g_0}}
{(\int_Mu^{\frac{2n}{n-2}}d\mu_{g_0})^{\frac{n-2}{n}}}
<0,
$$
and we refer to \cite{Aubin_Book} for a comprehensive introduction.
By the resolution of the Yamabe problem \cite{Kazdan_Warner_JDE}
we may assume $R_{g_0}\equiv-1$, whence
$$L_{g_{0}}=-c_{n}\Delta_{g_{0}}-1$$
and the constant functions become the first and a negative eigenspace of $L_{g_{0}}$.

\

If $K<0$, then \eqref{Y} is always solvable, and still,
if $K\leq 0$, solutions are unique \cite{Kazdan_Warner_JDE}.
Moreover
a necessary condition for solvability of \eqref{Y} is
\begin{equation}\label{necessary_condition_nu_1_positive}
\nu_{1}(\Omega_{K})=\nu_{1}(L_{g_{0}},\Omega_{K})>0,
\end{equation}
as was first proved by Rauzy \cite{Rauzy},
where
$\Omega_K=\{K \geq 0\}$,
\begin{equation*}%\label{eig}
\nu_{1}(L_{g_{0}},\Omega_K)
=
\sup_{\Omega_{K} \subset \Omega \; \text{smooth } \; }
\nu_{1}(L_{g_{0}},\Omega)
\end{equation*}
is the first Dirichlet eigenvalue
and here for a smooth subset $\Omega \subset M$
\begin{equation*}%\label{eig_explained}
\nu_{1}(L_{g_{0}},\Omega)
=
\inf_\mathcal{A}\frac{\int_{\Omega}L_{g_0}uud\mu_{g_0}}{\int_{\Omega}u^2d\mu_{g_0}}
,
\;
\mathcal{A}=\{u\in C_{0}^\infty(\Omega) \; : \; u>0 \; \text{ in } \; \Omega\ \}.
\end{equation*}
And actually,
if $K\leq 0$, the necessary condition $\nu_{1}(\Omega_{K})>0$ is also
sufficient to guarantee solvability, see \cite{Ouyang,Rauzy,Tang,Vazquez_Veron}.
Furthermore an additional necessary condition,
which is automatically satisfied for $0\not \equiv K \leq 0$,
is the positivity of the unique solution $\bar{w}>0$ of
\begin{equation}\label{linear_control_equation}
\mathcal{L}_{g_{0}}\bar{w}=-(n-1)\Delta_{g_{0}}\bar{w}+\bar{w}=-K,
\end{equation}
see \cite{Kazdan_Warner_JDE}, and in particular necessarily
$\int K d\mu_{g_{0}}<0$.
Indeed
$\bar{u}=u^{-\frac{4}{n-2}}$ for a solution $u>0$ of \eqref{Y}
is a subsolution to \eqref{linear_control_equation} and thus the maximum principle tells us,
that necessarily $\bar{u}\leq \bar{w}$.
Finally, when $K$ changes sign, Rauzy \cite{Rauzy} used a subcritical approximation argument
to obtain solvability under a smallness assumption on $\sup K$,
which later on  Aubin\&Bismuth \cite{Aubin_Bismuth} quantified, see
Remark \ref{rem_Aubin_Bismuth_Error} below and for $n=2$ the analogous work \cite{Bismuth_Dimension_Two} by Bismuth.

\

In view of these results we will study for sign changing $K$
the conformally prescribed scalar curvature problem in a variational setting \textit{different} from the
one used by Rauzy \cite{Rauzy} and assume throughout the necessary conditions
\begin{enumerate}[label=(\roman*)]
\item $\nu_{1}(\Omega_{K})>0$
\item $\int_{M}Kd\mu_{g_{0}}<0$.	
\end{enumerate}
Let us first introduce some notations. Let
$$X=\{u>0\}\cap\{r<0\}\cap\{k<0\}\cap\{\Vert u\Vert _{L^{\frac{2n}{n-2}}}=1\}
\subset C^{\infty}(M)
,$$
where
\begin{equation}\label{r_and_k_definitions}
r=r_{g_{u}}=\int_MR_{g}d\mu_g=\int_ML_{g_0}uud\mu_{g_0} \;
\text{and} \; k=k_{g_{u}}=\int_MKu^{\frac{2n}{n-2}}d\mu_{g_0}.
\end{equation}
Clearly $X\neq \emptyset$, as some constant function is in $X$,
and we consider
\begin{equation*}%\label{J}
J=\frac{-k}{(-r)^{\frac{n}{n-2}}}>0
\end{equation*}
as a scaling invariant functional on $X$ with derivative
\begin{equation*}%\label{partial_J}
\partial J(u)
=
\frac{2^*}{(-r)^{\frac{n}{n-2}}}\bigg(\frac{-k}{-r}\, L_{g_0}u-Ku^{\frac{n+2}{n-2}}\bigg)
,\;
2^*=\frac{2n}{n-2}
\end{equation*}
and a Yamabe type flow
\begin{equation}\label{flow_for_J}
\partial_{t}u
=
-(\frac{-k}{-r}R-K)u
=
-u^{-\frac{4}{n-2}}(\frac{-k}{-r}L_{g_{0}}u-Ku^{\frac{n+2}{n-2}}).
\end{equation}
In this way $J$ becomes a variational functional
and $X$ a variational space,
as any solution to $\partial J=0$ on $X$ is a solution to \eqref{Y}.
Note, that the choice of $X$ is somewhat natural,
since for $K\leq 0$ any normalized solution
to the conformally prescribed scalar curvature problem must be in $X$.
On the other hand $J$ is not necessarily bounded from below
and, flowing along \eqref{flow_for_J}, while
the factors $-k,-r>0$ are readily uniformly bounded away from infinity on $X$,
generally
\begin{equation}\label{r_k_or_k_r_not_bounded}
0<-k\longrightarrow 0
\; \text{ or } \;
0<-r \longrightarrow 0
\end{equation}
may happen and it is natural to ask,
how to prevent this scenario.

\

We will show first, that under conditions similar to,
but generally  weaker than those of Aubin\&Bismuth \cite{Aubin_Bismuth}
a certain integral condition holds,
to which we refer as a global A-B-inequality, namely
$$
\Vert u \Vert_{H^{1}}^{2}
\leq
Ar
+
B\vert k \vert^{\frac{n-2}{n}}
\; \text{ for all } \;
u\in H^{1}(M),
$$
see Proposition \ref{prop_A_B_inequality_from_A_B_conditions},
and second, that an A-B-inequality holding on $X$ guarantees $\inf_{X} J>0$ and
\eqref{r_k_or_k_r_not_bounded} does not occur,
whence $J$ becomes an energy and the flow generated by \eqref{flow_for_J} can be used for variational arguments.
In particular, by choosing appropriate initial data,
we find a solution of \eqref{Y},
which is a global minimizer of the functional $J$ on $X$.
Note, that a minimizing property or saddle point structure of
the solution obtained via approximation and variational means by Rauzy or via perturbation arguments by
Aubin\&Bismuth
is unknown, while here we argue on the critical equation directly
and do not rely on the method of sub- and supersolutions for instance and in
contrast to e.g. \cite{Aubin_Bismuth}, \cite{Kazdan_Warner_JDE}.

\begin{thm}\label{thm_minimize_J}
If an A-B-inequality holds
on some sublevel
$$J^{L}=\{ J\leq L \} \neq \emptyset,$$
then $J$ admits a global minimizer on $X$,
which is a solution of equation \eqref{Y}.
\end{thm}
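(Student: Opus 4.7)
The plan is to apply the direct method of the calculus of variations to the constrained functional $J$ on $X$, using the A-B-inequality on $J^L$ to provide a strictly positive lower bound for $J$, an $H^1$ bound on minimizing sequences, and the compactness in $L^{2^*}$ needed to identify a minimizer. For $u \in J^L \subset X$, scale invariance gives $|k|^{(n-2)/n} = J(u)^{(n-2)/n}(-r)$, so the A-B-inequality rewrites as
\[
0 < \|u\|_{H^1}^{2} \leq \bigl(B\,J(u)^{(n-2)/n} - A\bigr)(-r),
\]
and since $-r > 0$ this forces $J(u) \geq (A/B)^{n/(n-2)} > 0$. Because $J^L \neq \emptyset$ we have $\inf_X J \leq L$, so a minimizing sequence eventually enters $J^L$ and its $J$-values are bounded below by $(A/B)^{n/(n-2)}$.

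Next I would select $(u_m) \subset J^L$ with $J(u_m) \to \inf_X J$. The normalization $\|u_m\|_{L^{2^*}} = 1$ and H\"older's inequality give a uniform $L^2$ bound, and the sign condition $r(u_m) < 0$ then forces $c_n\|\nabla u_m\|_{L^2}^{2} < \|u_m\|_{L^2}^{2} \leq C$, yielding a uniform $H^1$ bound. Extracting a subsequence, $u_m \rightharpoonup u$ weakly in $H^1$, strongly in $L^p$ for every $p < 2^*$, and pointwise a.e., with $u \geq 0$.

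The principal obstacle is to promote weak $L^{2^*}$-convergence to strong, i.e.\ to rule out concentration of $|u_m|^{2^*}$ at finitely many points. Setting $v_m := u_m - u$ and letting $\alpha = \lim\|\nabla v_m\|_{L^2}^{2}$, $\beta = \lim\int |v_m|^{2^*}d\mu_{g_0}$, $\gamma = \lim\int K|v_m|^{2^*}d\mu_{g_0}$, the Brezis--Lieb lemma together with weak $H^1$-convergence and strong $L^2$-convergence yield
\[
r(u_m) \to r(u) + c_n\alpha, \quad k(u_m) \to k(u) + \gamma, \quad \|u_m\|_{H^1}^2 \to \|u\|_{H^1}^2 + \alpha.
\]
Inserting these into the A-B-inequality applied to $u_m$ and combining with the Sobolev inequality, which yields $\beta^{2/2^*} \leq C_S\,\alpha + o(1)$ (valid because $v_m \to 0$ in $L^2$), together with the crude bound $|\gamma| \leq \|K\|_\infty\,\beta$, I expect to derive an inequality incompatible with $\beta > 0$. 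This balance of constants is the technical heart of the argument, and is precisely where the quantitative form of the A-B-inequality on $J^L$ must be exploited; one may need in addition to apply the inequality to carefully chosen test functions supported away from and near the concentration points to close the estimate.

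Once concentration has been ruled out, $\|u\|_{L^{2^*}} = 1$ and $k(u) = \lim k(u_m) < 0$, while weak lower semicontinuity of the Dirichlet integral gives $r(u) \leq \liminf r(u_m) < 0$; hence $u \in X$. Continuity of $J$ along the convergent sequence then forces $J(u) = \inf_X J$, so $u$ is a global minimizer. As a constrained critical point, $u$ satisfies the Lagrange equation $\frac{-k(u)}{-r(u)}\,L_{g_0}u = K u^{(n+2)/(n-2)}$, and the rescaling $u \mapsto \lambda u$ with $\lambda^{4/(n-2)} = (-k(u))/(-r(u)) > 0$ converts this into \eqref{Y}; elliptic regularity and the strong maximum principle finally give a smooth $u > 0$.
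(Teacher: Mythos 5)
Your scheme has a genuine gap exactly at the step you call its ``technical heart'': excluding concentration along the minimizing sequence. Carrying your own estimates through shows the proposed balance of constants cannot close under the hypotheses of the theorem. Full concentration is indeed impossible (it would force $r(u_m)\geq c_n\alpha-o(1)>0$), and concentration on $\{K\leq 0\}$ is also excluded, but the dangerous case is a loss of mass $\beta>0$ at points where $K>0$. Writing $P=-k(u)$, $Q=-r(u)$, your Brezis--Lieb identities give $\inf_X J=\lim J(u_m)=\frac{P-\gamma}{(Q-c_n\alpha)^{n/(n-2)}}$, while the competitor $u$ (after normalization and a small positive perturbation to place it in $X$) gives $P\geq \inf_X J\cdot Q^{n/(n-2)}$. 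Subtracting and using convexity of $t\mapsto t^{n/(n-2)}$ together with the sublevel bound $Q-c_n\alpha=-\lim r_m\geq c>0$ from Lemma \ref{lem_implications_of_A_B_inequality_on_J}, and then the Sobolev inequality $c_n\alpha\geq\sigma\beta^{\frac{n-2}{n}}+o(1)$ for the vanishing-in-$L^2$ part, one only obtains
$\sup_M K\cdot\beta\;\geq\;\gamma\;\geq\;\inf_X J\,\tfrac{n}{n-2}\,c^{\frac{2}{n-2}}\,c_n\alpha\;\gtrsim\;\inf_X J\, c^{\frac{2}{n-2}}\sigma\,\beta^{\frac{n-2}{n}}$,
i.e.\ a \emph{positive lower bound} $\beta\geq\beta_0(A,B,\sup_M K)>0$ on the lost mass, which is a contradiction only if $\sup_M K$ is small enough that $\beta_0>1$. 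Theorem \ref{thm_minimize_J} assumes no such smallness, only an A-B-inequality on a sublevel; and in the paper a smallness condition on $\sup_M K$ is needed precisely for this kind of exclusion (Lemma \ref{lem_no_blow_up_for_J}), while mixed-type concentration is expected to occur in general (Remark \ref{rem_critcal_points_at_infinity}). So the direct concentration-compactness route, as you set it up, proves the statement only under an additional hypothesis.

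The paper's proof is organized so that concentration never has to be ruled out. Each member of a minimizing sequence of initial data is evolved by the flow \eqref{flow_for_J}; the A-B-inequality via Lemma \ref{lem_implications_of_A_B_inequality_on_J} keeps $-k,-r$ uniformly away from $0$ and $\infty$ on the sublevel, which yields long-time existence (Lemmata \ref{logineq}, \ref{lem_long_time_existence}), a uniform positive lower bound along the flow line (Proposition \ref{prop_flow_properties} (iii)), and times $t_j$ with $\vert\partial J\vert(u_{t_j})\to 0$. The weak $H^1$ limit is then automatically a \emph{positive} solution of the rescaled equation, and the identity $r_{u_\infty}/k_{u_\infty}=r_\infty/k_\infty$ combined with weak lower semicontinuity of $r$ gives $J(u_\infty)\leq J(u_0)$, see \eqref{energy_of_weak_limit}: even if mass is lost to a bubble, no energy is gained in the limit, so one obtains a minimizing sequence of positive solutions and, repeating the argument, a minimizer. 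Note also that in your last step the weak limit is a priori only a nonnegative $H^1$ function, so the claims ``$u\in X$'' and the passage to the Lagrange equation (and then to positivity via Harnack) need an extra argument; the paper gets positivity for free from the flow's lower bound and the equation from the near-criticality of $u_{t_j}$, but this issue is secondary to the missing concentration exclusion.
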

For the proof see Section \ref{section_solvability_by_minimization}.
Combined with
Proposition \ref{prop_A_B_inequality_from_A_B_conditions}, Theorem \ref{thm_minimize_J} quantifies
the smallness assumption in  Rauzy \cite{Rauzy}.
\begin{remark}\label{rem_Aubin_Bismuth_Error}
Let us review the Aubin\&Bismuth result in the corresponding notations
of Remark 6.13 in \cite{Aubin_Book} and Theorem 6 in \cite{Aubin_Bismuth},
namely solvability of
$$
L_{g_{0}}u=fu^{\frac{n+2}{n-2}},\; u>0
\; \text{ for } \;
f \in \mathcal{F}_{\alpha,K}
=
\{ f\in C^{\alpha}(M) \; : \;
\{ f\geq 0 \} = K
 \},
$$
where $0<\alpha<1$ and $K \subset M$ are fixed.
\begin{enumerate}[label=(\roman*)]
\item[a)] In Remark 6.13 in \cite{Aubin_Book} solvability is claimed, provided, that
for some smooth
\begin{equation} \label{Omega_in_Aubin_Bismuth}
\Omega \supset K \; \text{ with  } \; \lambda(\Omega) > -\tilde{R}
\end{equation}
there holds
$$\sup f \leq C(K) \inf_{M\setminus \Omega}(-f),$$
where $C(K)$ supposedly depends on
$K=\{ f\geq 0 \}$ only.
But this is not substantiated by the reference to \cite{Aubin_Bismuth}, see Theorem 6 there,
whose statement requires specific choices of neighbourhoods
$\Omega \supset \theta \supset K$, see b) below, and thus
$\Omega$ as in \eqref{Omega_in_Aubin_Bismuth} is not arbitrary.

\item[b)]
In Theorem 6 in \cite{Aubin_Bismuth} solvability is claimed, provided
\begin{equation}\label{Smallness_Assumption_In_Aubin_Bismuth}
\sup K \leq C(K)\inf_{M\setminus \theta} (-f),
\end{equation}
where according to the Definition before Theorem 6
\begin{equation}\label{theta_and_Omega_in_Aubin_Bismuth}
\Omega \supset \supset \theta \supset \supset K
\end{equation}
are smooth neighbourhoods of $K$, which in particular satisfy
$$
\frac{1}{2}\lambda(K) < \lambda(\theta) < \lambda(K)
\; \text{ and } \;
\frac{1}{2}\lambda(\theta) < \lambda(\Omega) < \lambda(\theta)
$$
for the first Dirichlet eigenvalues $\lambda$ of these sets, whence specific choices for $\theta$ and $\Omega$
are required in contrast to Proposition \ref{prop_A_B_inequality_from_A_B_conditions} below.

\item[c)]
Moreover $C(K)$ in \eqref{Smallness_Assumption_In_Aubin_Bismuth} is supposed to depend on
$K=\{ f\geq 0 \}$ only, which is surprising, since the smallness constant
in our Proposition \ref{prop_A_B_inequality_from_A_B_conditions} depends on a distance
corresponding to  $d(\partial\theta,\partial\Omega)$, cf. \eqref{theta_and_Omega_in_Aubin_Bismuth}.
And in fact the upper bound of $\psi$ in Proposition 1 of \cite{Aubin_Bismuth},
claimed to be
$$\sup \psi \leq C(K),$$
tends to infinity, when $d(\partial \Omega,\partial\theta)\longrightarrow 0$.
To be precise, if $d(\partial \Omega,\partial\theta)\longrightarrow 0$, then $\inf\, \varphi \longrightarrow 0$
from  (15) and the definition of $\varphi$ two  lines below, while
$$u^{+}=\xi \varphi \; \text{ for a constant } \; \xi>0
\; \text{ and } \;
u^{-}=\vert R \vert^{\frac{n-2}{4}}
$$
shall act as a super- and subsolution respectively.
To apply the method of sub- and supersolutions we then require
$$u^{-}\leq u^{+} \; \text{ pointwise and thus } \; \xi\longrightarrow \infty,$$
as $sup \,\varphi \nlongrightarrow 0$. So a uniform upper bound for $\psi$ is not feasible.
 \end{enumerate}
\end{remark}
\noindent
Either way,
while Aubin\&Bismuth in \cite{Aubin_Bismuth} qualify via \eqref{Smallness_Assumption_In_Aubin_Bismuth}
the smallness condition of Rauzy in \cite{Rauzy},
Proposition \ref{prop_A_B_inequality_from_A_B_conditions}
quantifies the smallness constant $C(K)>0$ in \eqref{Smallness_Assumption_In_Aubin_Bismuth}.
Concerning non existence and
recalling \eqref{necessary_condition_nu_1_positive} and \eqref{linear_control_equation} we have
\begin{lemma}\label{lem_conditions_for_existence}
There holds
$$
\nu_{1}(\Omega_{K})>0
\begin{matrix}
\nLongleftarrow\\
\nLongrightarrow
\end{matrix}	
\bar{w}>0
\;\; \Longrightarrow \;
\int_{M} Kd\mu_{g_{0}}<0.
$$
\end{lemma}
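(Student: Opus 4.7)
The plan is to treat the three claims separately: the forward implication is a one-line integration, while each of the two non-implications requires an explicit counter-example built from a suitably localized bump.

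The implication $\bar{w} > 0 \Rightarrow \int_M K\, d\mu_{g_0} < 0$ follows at once from \eqref{linear_control_equation}: integrating the PDE over the closed manifold and using $\int_M \Delta_{g_0}\bar{w}\, d\mu_{g_0}=0$ gives $\int_M \bar{w}\, d\mu_{g_0} = -\int_M K\, d\mu_{g_0}$, and positivity of $\bar{w}$ forces $\int_M K\, d\mu_{g_0}<0$.

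For $\nu_1(\Omega_K) > 0 \not\Rightarrow \bar{w} > 0$, the key point is that $\Omega_K$ only sees the sign of $K$ while $\bar{w}$ depends linearly on its amplitude. I would pick a smooth bump $\phi \geq 0$, $\phi \not\equiv 0$, supported in a small ball $B\subset M$, and set $K_C = C\phi - \delta$ for small $\delta > 0$ and large $C > 0$. Then $\Omega_{K_C} = \{\phi \geq \delta/C\}$ sits inside $B$, and for $B$ small enough the Dirichlet Laplace eigenvalue on any smooth neighbourhood of $\Omega_{K_C}$ dominates $|R_{g_0}| = 1$, so $\nu_1(\Omega_{K_C}) > 0$. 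Writing $\bar{w} = Ca + \delta$ by linearity, with $\mathcal{L}_{g_0} a = -\phi$ and $\mathcal{L}_{g_0} 1 = 1$, the strong maximum principle applied to $-a$ (exploiting that $\mathcal{L}_{g_0}$ has zeroth order term $+1 > 0$) gives $a < 0$ on all of $M$. Taking $C$ large then makes $\bar{w} = Ca + \delta < 0$ throughout $M$.

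For $\bar{w} > 0 \not\Rightarrow \nu_1(\Omega_K) > 0$ I would take the opposite extreme, $K = -\phi$ with $\phi \geq 0$, $\phi \not\equiv 0$, supported in a small open $U \subset M$. Then $\mathcal{L}_{g_0}\bar{w} = \phi \geq 0$ and $\not\equiv 0$, so $\bar{w} > 0$ by the strong maximum principle. On the other hand $\Omega_K \supset M \setminus U$, so every admissible smooth $\Omega \supset \Omega_K$ has $M \setminus \Omega \subset U$; as $U$ shrinks, the first Dirichlet eigenvalue of $L_{g_0}$ on $\Omega$ converges to the first eigenvalue of $L_{g_0}$ on the closed manifold $M$, which is $R_{g_0} = -1$ realized by the constants. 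Taking $U$ small enough makes $\nu_1(L_{g_0}, \Omega) < 0$ uniformly over the admissible $\Omega$, hence $\nu_1(\Omega_K) \leq 0$. The mildly delicate step, and the main obstacle, is precisely this eigenvalue convergence as the excluded region shrinks, which I would verify by a standard cut-off argument in the Rayleigh quotient --- testing with the constant $1$ truncated to vanish near $U$ brings the quotient arbitrarily close to $-1$.
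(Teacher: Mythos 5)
Your argument is correct, and two of its three parts coincide with the paper's: the implication $\bar{w}>0\Rightarrow\int_M K\,d\mu_{g_0}<0$ is obtained, as in the paper, by integrating \eqref{linear_control_equation} over $M$ (``testing against a constant''), and your counterexample for $\bar{w}>0\not\Rightarrow\nu_1(\Omega_K)>0$ is the paper's as well: take $0\not\equiv K\leq 0$ negative only on a small ball, so $\Omega_K=M\setminus B_\epsilon(x_0)$, and test the Rayleigh quotient with a cut-off of the constant function to drive $\nu_1(\Omega_K)$ below $0$. Where you genuinely diverge is the non-implication $\nu_1(\Omega_K)>0\not\Rightarrow\bar{w}>0$: the paper constructs $K=-\epsilon+\eta_{\epsilon,x_0}\bigl(\frac{\lambda}{1+\lambda^2 d_{g_0}^2(x_0,\cdot)}\bigr)^p$ with $\min\{2,n/2\}<p<n$ and $\epsilon\lambda^{n-p}\gg 1$ and evaluates $\bar{w}(x_0)=-G_{\mathcal{L}_{g_0}}(x_0,\cdot)*K\simeq-\lambda^{p-2}<0$ through Green's function asymptotics, whereas your $K_C=C\phi-\delta$ uses only linearity of the solution operator and the strong maximum principle ($\mathcal{L}_{g_0}a=-\phi\leq 0$, $\not\equiv 0$ forces $a<0$ on the compact $M$, then take $C$ large), which is more elementary and avoids the Green's function altogether. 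The trade-off is that the paper's example additionally satisfies $\int_M K\,d\mu_{g_0}<0$, i.e.\ both necessary conditions (i) and (ii) simultaneously, which is what supports the subsequent remark that these conditions are not sufficient and that a smallness assumption is natural; your $K_C$ has $\int_M K_C\,d\mu_{g_0}>0$ for large $C$, so it proves the lemma exactly as displayed but not that stronger point. One small wording issue: it is not true that \emph{any} smooth neighbourhood of $\Omega_{K_C}$ has positive first Dirichlet eigenvalue, but since $\nu_1(\Omega_{K_C})$ is defined as a supremum it suffices to pick one small smooth ball containing $\mathrm{supp}\,\phi$, which is clearly what you intend, so the conclusion stands.
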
	
Thus (i) or (ii) above are alone  not sufficient to guarantee solvability.
To see Lemma \ref{lem_conditions_for_existence}, whose demonstration we believe to be instructive,
first note, that the last implication
is immediate from testing \eqref{linear_control_equation} against a constant.
Secondly, to see $\bar{w} >0\nLongrightarrow \hspace{-4pt}\nu_{1}(\Omega_K)>0$, note, that $\bar{w}>0$ for $0 \not \equiv K\leq 0$ by the maximum principle,
while we may choose $\Omega_{K}=M\setminus B_{\epsilon}(x_{0})$.
Then for suitable
$0\leq\eta_{\epsilon,x_{0}}\leq 1$
with
$\eta_{\epsilon,x_{0}} \equiv 0$ on $B_{\epsilon}(x_{0})$ we have
$$\int_{M}L_{g_{0}}\eta_{\epsilon,x_{0}}\eta_{\epsilon,x_{0}}<0,$$
whence $\nu_{1}(\Omega_{K})<0$.
Finally we may even construct $K\in C^{\infty}(M)$, for which (i) and (ii) hold,
but $\bar{w} \not >0$.
Indeed $\mathcal{L}_{g_{0}}>0$ has a Green's function $G_{\mathcal{L}_{g_{0}}}$ satisfying
$$
\inf_{(M\times M) \setminus \{  diag(M) \} }G_{\mathcal{L}_{g_{0}}}>0
\; \text{ with a principal term} \;
G_{\mathcal{L}_{g_{0}}}(x,y)\simeq d_{g_{0}}^{2-n}(x,y)
$$
and we consider for $\min\{2,n/2\}<p<n$, $0<\epsilon \ll 1$ and $ \epsilon \lambda^{n-p} \gg 1$
$$K=-\epsilon+\eta_{\epsilon,x_{0}}(\frac{\lambda}{1+\lambda^{2}d^{2}_{g_{0}}(x_{0},x)})^{p}$$
with a suitable cut-off function $\eta_{\epsilon,x_{0}}$,
$\eta_{\epsilon,x_{0}} \equiv 1$ on $B_{\epsilon}(x_{0})$.
We then easily check
$$\nu_{1}(\Omega_{K})>0 \; \text{ and } \; \int_{M}Kd\mu_{g_{0}}<0,$$
while
$$\bar{w}(x_{0})=-G_{\mathcal{L}_{g_{0}}}(x_{0},\cdot)*K\simeq -\lambda^{p-2}<0.$$
In particular the last argument shows,
that some kind of smallness assumption to guarantee solvability is natural.

\

On the other hand, once solvability of \eqref{Y} is given,
we naturally ask for uniqueness,
which, as we recall, holds true in case $K\leq 0$.
Surprisingly, as our second theorem shows,
we may \textit{either} lose existence \textit{or} uniqueness,
when passing from $K < 0$ strictly negative to $K$ sign changing.

In the latter case, when we lose uniqueness in that passage,
but not existence,
we find at least two solutions,
one inducing a totally \textit{negative} scalar curvature $r=k<0$,
see \eqref{r_and_k_definitions},
while more surprisingly the other solution induces a totally \textit{positive} one, see Theorem \ref{thm_second_solution} below,
although, fixing some suitable $0\not\equiv K_-\leq 0$,
we may choose $0\not \equiv K_{+}\geq 0$ such,
that for $K=K_{+} + K_{-}$ the positive maximum of $K$ is as small as we wish.

\

For the sake of simple statements we say, that $Cond_{n}$ holds at $a\in M$, if
$$
\quad \quad
(Cond_{n})
\quad
\begin{cases}
\, \,
3\leq n \leq 5 \; \text{ and no restrictions on $a\in M$ are required}
\\
\,\,\;\;\;\;\;\,\; n\geq 6 \; \text{ and $M$ is locally conformally flat around $a\in M$.}
\end{cases}
$$

In particular $Cond_{n}$ is satisfied for all $a\in M$, if $3\leq n\leq 5$.

\begin{thm}\label{thm_second_solution}
Suppose, that $L_{g_{0}}$ is invertible, and consider
a sign changing function $K\in C^{\infty}(M)$,
for which a global  A-B-inequality holds. Then, if
$Cond_{n}$ is satisfied at some
$a\in \{ K=\max K \} $ and
$$
\nabla^{l}K(a)=0 \; \text{ for all } \; \frac{n-2}{2} \geq l \in \N,
$$
 there exists $C=C(A,B)$ such, that
the conformally prescribed scalar curvature problem
admits at least two solutions
$u_{0},u_{1}$, provided
$\sup K \leq C$,
in which case
$$
r_{u_{0}},k_{u_{0}}<0
\; \text{ and } \;
r_{u_{1}},k_{u_{1}}>0.
$$
\end{thm}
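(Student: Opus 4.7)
The first solution requires essentially no new work: invoking Theorem \ref{thm_minimize_J} with the assumed global A-B-inequality produces a global minimizer $u_0$ of $J$ on $X$, which solves \eqref{Y}, and membership in $X$ forces $r_{u_0}, k_{u_0} < 0$. Testing \eqref{Y} against $u$ shows $r_u = k_u$ for every classical solution, so producing a second solution with $r_{u_1} > 0$ will automatically give $k_{u_1} > 0$ and make it distinct from $u_0$.

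For the second solution I would introduce the complementary variational space
$$X^+ = \{u \in C^\infty(M) : u > 0,\ r_u > 0,\ k_u > 0,\ \|u\|_{L^{2n/(n-2)}} = 1\},$$
equipped with the Yamabe-type functional $I(u) = r_u / k_u^{(n-2)/n}$, whose critical points (after rescaling by $(r_u/k_u)^{(n-2)/4}$) solve \eqref{Y}. Nonemptiness of $X^+$ comes from the standard bubble ansatz $\varphi_{a, \lambda}(x) \sim (\lambda/(1+\lambda^2 d_{g_0}^2(a,x)))^{(n-2)/2}$ written in conformal normal coordinates (available by $Cond_n$) at the maximum point $a \in \{K = \max K\}$: for $\lambda$ large, the mass concentrates where $K > 0$, which gives $k_{\varphi_{a,\lambda}} > 0$, and the Dirichlet energy dominates the negative $\int u^2$ contribution coming from $R_{g_0} \equiv -1$, which gives $r_{\varphi_{a,\lambda}} > 0$. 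An elementary expansion yields $I(\varphi_{a,\lambda}) \to Y(S^n)/K(a)^{(n-2)/n}$ as $\lambda \to \infty$.

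The analytic heart of the proof is to show that $\inf_{X^+} I$ is attained. A concentration-compactness analysis of a minimizing sequence leaves only two possibilities: strong $H^1$-convergence (producing the desired minimizer $v_1$, and then $u_1 = (k_{v_1}/r_{v_1})^{(n-2)/4} v_1$), or blow-up as a single bubble at some $b \in \{K > 0\}$ with limit value $Y(S^n)/K(b)^{(n-2)/n} \geq Y(S^n)/(\max K)^{(n-2)/n}$. To rule out blow-up, it suffices to find a test element of $X^+$ with $I$-value strictly below this threshold. Here the hypotheses $Cond_n$ and $\nabla^l K(a) = 0$ for $l \leq (n-2)/2$ combine with invertibility of $L_{g_0}$ to yield an Aubin-type bubble expansion
$$I(\varphi_{a, \lambda}) = \frac{Y(S^n)}{K(a)^{(n-2)/n}} - \beta_n\, G_{L_{g_0}}(a, a)\, \lambda^{2-n} + o(\lambda^{2-n}),$$
where $G_{L_{g_0}}$ is the Green's function of the invertible conformal Laplacian; the vanishing-derivatives hypothesis kills all intermediate $K$-dependent orders, and $Cond_n$ controls the metric expansion.

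The main obstacle is the \emph{sign} of this correction. In the positive-Yamabe setting the positive mass theorem supplies $G_{L_{g_0}}(a, a) > 0$, but here $L_{g_0} = -c_n \Delta_{g_0} - 1$ is indefinite and no such a priori sign is available. The smallness assumption $\sup K \leq C(A, B)$ is precisely what carries the argument through: when $\max K$ is small, the threshold $Y(S^n)/(\max K)^{(n-2)/n}$ is large, so it is enough to exhibit \emph{any} competitor in $X^+$ below it. A natural candidate is the shifted ansatz $u_0 + \alpha\, \varphi_{a, \lambda}$ for suitable $\alpha, \lambda > 0$: expanding around the background $u_0$ instead of around $0$ replaces the indefinite $L_{g_0}$ by the linearized operator $L_{g_0} - \tfrac{n+2}{n-2} K u_0^{4/(n-2)}$, whose positive part and Green's function can be quantitatively estimated using the constants $A, B$ of the A-B-inequality. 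The explicit dependence $C = C(A,B)$ of the smallness constant should emerge from precisely this estimate.
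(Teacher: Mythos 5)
Your overall architecture coincides with the paper's (first solution from Theorem \ref{thm_minimize_J}; second solution by minimizing $I=r/k^{\frac{n-2}{n}}$ on the positive region; blow-up ruled out by a test-function energy comparison, even with the mixed ansatz $u_0+\alpha\varphi_{a,\lambda}$), but the quantitative core has a genuine gap: the dichotomy you state for a minimizing sequence of $I$ is not correct. By the Palais--Smale decomposition (Proposition \ref{prop_blow_up_analysis}) the weak limit $u_\infty$ of a blowing-up sequence need not vanish; it may be a (rescaled) solution with $r_{u_\infty}=k_{u_\infty}<0$, and by Proposition \ref{prop_rough_energy} the limiting energy is then $\bigl(E(u_\infty)+c_0\sum(4n(n-1)/K(a_{i_\infty}))^{\frac{n-2}{2}}\bigr)^{\frac{2}{n}}$ with $E(u_\infty)=-J^{-\frac{n-2}{2}}(u_\infty/\Vert u_\infty\Vert_{L^{2n/(n-2)}})<0$. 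Hence the least blow-up energy $I_\infty$ is the \emph{mixed} level (single bubble at $\{K=\max K\}$ on top of $u_0$), which is strictly \emph{below} your pure-bubble threshold $Y(S^n)/(\max K)^{\frac{n-2}{n}}$; exhibiting a competitor under the pure-bubble level therefore does not exclude blow-up. (Moreover a zero-weak-limit bubble cannot simply be discarded for an arbitrary minimizing sequence; the paper generates Palais--Smale sequences by the flow \eqref{flow_for_I}, whose preservation of positivity is what rules out zero weak limits.)

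The mechanism you propose for the strict inequality also does not carry through: making $\sup K$ small enlarges the threshold, but $\inf_Y I$ grows at the same rate, and in fact the leading term of $I(\alpha_0u_0+\alpha_1\varphi_{a,\lambda})$ is \emph{exactly} the mixed threshold $\bigl(-J^{-\frac{n-2}{2}}(u_0)+c_0(4n(n-1)/K(a))^{\frac{n-2}{2}}\bigr)^{\frac{2}{n}}$, not strictly below it. In the paper (Proposition \ref{prop_below_I_infty}) the strict gain comes from the next order: choosing $\alpha_0,\alpha_1$ so that the sign-indefinite term $\int L_{g_0}u_0\varphi_{a,\lambda}d\mu_{g_0}$ drops out, the denominator retains the positive cross term $\tfrac{2n}{n-2}\int K u_0\varphi_{a,\lambda}^{\frac{n+2}{n-2}}d\mu_{g_0}\simeq c\,K(a)\,u_0(a)\lambda^{-\frac{n-2}{2}}$, lowering $I$ by $O^{+}(\lambda^{-\frac{n-2}{2}})$. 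This deconcentration scale is precisely why $Cond_n$ and $\nabla^lK(a)=0$ for $l\le\frac{n-2}{2}$ are assumed: they ensure all metric contributions ($H_{s,a}$, $R_{g_a}$) and the $K$-expansion are $o(\lambda^{-\frac{n-2}{2}})$. Your sketch instead pushes a pure-bubble expansion down to order $\lambda^{2-n}$, which is the Escobar--Schoen scale (requiring vanishing up to order $n-2$, stronger than the hypothesis) and still leaves you facing the uncontrolled sign of the Green's function; the linearized-operator estimate "in terms of $A,B$" that you invoke is not the actual mechanism. Finally, the smallness $\sup K\le C(A,B)$ enters the paper not to enlarge the threshold but to guarantee positivity of the blow-up energies and compactness of the $J$-flow (Lemma \ref{lem_no_blow_up_for_J}, Proposition \ref{prop_rough_energy}), the dependence on $A,B$ being traced through $\inf_XJ\ge\gamma B^{-\frac{n}{n-2}}$.
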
	
For the proof see Section \ref{sec_compactness_and_existence}.
Some remarks on Theorem \ref{thm_second_solution} are in order.
\begin{remark}
\begin{enumerate}[label=(\roman*)]
\item 	
The function $u_{0}$ is the minimizer from Theorem \ref{thm_minimize_J}, while
the second solution $u_{1}$ is a minimizer of
$
I=\frac{r}{k^{\frac{n-2}{n}}}
$
on the natural domain
$$
Y=
\{ r>0 \} \cap \{ k>0 \} \cap \{ u>0 \} \cap \{ \Vert \cdot \Vert_{L^{\frac{2n}{n-2}}}=1 \}
\subset C^{\infty}(M).
$$
Clearly $Y=\emptyset$ for $K\leq 0$ and, if an A-B-inequality holds, then $\inf_{Y}I>0$.
\item
Let $J_{\epsilon}=J_{K_{\epsilon}}$, where
$K_{\epsilon}=K_{0}+\epsilon K_{1}$, $K_{0} \leq 0$ and $\sup K_{1}>0$.
Suppose, that an A-B-inequality holds for $K_{\epsilon_{0}}$,
whence for all $0\leq \epsilon \leq \epsilon_{0}$ in particular
the same A-B-inequality holds for $K_{\epsilon}$, and thus
we can minimize $J_{\epsilon}$ from Theorem \ref{thm_minimize_J}.
Then, at least if Theorem \ref{thm_second_solution} is applicable,
we have
\begin{enumerate}[label=(\roman*)]
\item[1)] a unique solution $u_{0}$ for $\epsilon=0$, namely the minimizer of $J_{0}$
\item[2)] at least two solutions
$$u_{0,\epsilon}\neq u_{1,\epsilon},$$
namely the minimizers of $J_{\epsilon}$ and $I_{\epsilon}$ respectively.  	
\end{enumerate}
Thus $u_{1,\epsilon}$ must cease to exist in the passage $0<\epsilon \longrightarrow 0$. Indeed
$$
0<k_{u_{1,\epsilon}}
=
\int K_{\epsilon}u_{1,\epsilon}^{\frac{2n}{n-2}}d\mu_{g_{0}}
\leq
\epsilon\int K_{1}u_{1,\epsilon}^{\frac{2n}{n-2}}d\mu_{g_{0}}
\leq
\epsilon \max K_{1} \xrightarrow{\epsilon \to 0} 0,
$$
while from the validity of an A-B-inequality $r_{u_{1,\epsilon}}\neq o_{\epsilon}(1)$. Hence in fact
$$
\inf I_{\epsilon}=\frac{r_{u_{1,\epsilon}}}{k_{u_{1,\epsilon}}^{\frac{n-2}{n}}}
\xrightarrow{\epsilon \to 0} \infty.
$$
\item The dimensional dependence of Theorem \ref{thm_second_solution} is reminiscent of
the distinction in Theorem 2.1 and Theorem 2.3 in \cite{Escobar_Schoen}.
The differences are explained in terms of, for which dimensions vanishing of $\nabla^{l}K$is assumed,
and, if vanishing is assumed, to which degree $l$, played against the different scales of deconcentration,
i.e. the principal quantities, that prevent blow-up.
In case of Escobar\&Schoen this is the positive mass $H$,
when considering a single bubbling blow-up of type $u=\varphi_{a,\lambda}$,
for us it is the solution $u_{0}$ for a mixed type blow up $u=u_0 + \varphi_{a,\lambda}$.
The situation is as follows
\begin{center}
\begin{tabular}[t]{l|cc}
 & Escobar\&Schoen & Mayer\&Zhu \\
\hline
 no vanishing assumption  & $n=3$ & $3\leq n \leq 5$\\
 vanishing assumption & $n\geq 4$ & $n\geq 6$ \\
 vanishing degree & $l\leq n-2$ & $l\leq \sfrac{(n-2)}{2}$\\
 deconcentration term & $\sfrac{H(a)}{\lambda^{n-2}}$ & $\sfrac{u_{0}(a)}{\lambda^{\frac{n-2}{2}}}$
\end{tabular}
\end{center}
where $a\in \{K=\max K\}$ and in particular $\nabla K(a)=0$. On the other hand a derivative
$\nabla^l K(a)$ contributes in the corresponding energy expansion at most a quantity of order
$O(\sfrac{1}{\lambda^l} + \sfrac{1}{\lambda^n})$. Then, as is easy to see from the table above,
the deconcentration terms are in fact dominant.
We refer to \cite{Ahmedou_Ben_Ayed_Non_Simple_Blow_Up,Bahri_High_Dimensions,Mayer_Scalar_Curvature_Flow}
for related studies on the mixed type blow-up case.
\item
For Theorem \ref{thm_second_solution} to be meaningful,
invertibility of $L_{g_{0}}$ and local flatness
at least {\normalfont somewhere} on $M$ must be compatible.
Indeed, if $L_{g_{0}}$ is not invertible, let us consider
the finite dimensional space $ker( L_{g_{0}} )$, for each
eigenfunction $e_{i} \in ker L_{g_{0}}$ its nodal set
$N_{i}=\{ e_{i} = 0\}$ and
an open set $O_{i}\subset M$ with $N_{i}\cap O_{i} \neq \emptyset$ such,
that $O_{i}\cap O_{j}=\emptyset$ for $i\neq j$.
Following and slightly modifying the arguments in \cite{GHJL},
we then can perturb $g_{0}$ to gain invertibility,
while the perturbation leaves $g_{0}$
on $M \setminus \cup_{i}O_{i}$ unchanged.
The required localization of the perturbation in
\cite{GHJL} is based on adding
for each $i=1,\ldots,dim(ker(L_{g_{0}}))$ a suitable cut-off function $\eta=\eta_{i}$,
living on $O_{i}$,
to the definition of $h=h_{i}$ in line 5, page 799 in \cite{GHJL},
where $\psi=e_{i}$, i.e.
$$
h
=
\eta
\left(
c_{n}\psi
(
2\psi \mathring{\nabla}^{2}\psi
-
\psi^{2}\mathring{Ric}
)
+
(2c_{n}-1)(d\psi \otimes d\psi)^{o}
\right).
$$
We leave it to the reader to verify, that the arguments in \cite{GHJL}
proceed with only minor modifications.
In this way, if $M=(M^{n},g_{0})$ is of negative Yamabe invariant
and locally conformally flat on some $A\subset M\setminus \cup_{i}O_{i}$,
then we may slightly change the metric $g_{0}$ to gain invertibility
of $L_{g_{0}}$, while local conformal flatness on $A$ is unchanged and,
as the modification is only slight, the Yamabe invariant remains negative.

\end{enumerate}
\end{remark}	
Concerning Theorem \ref{thm_second_solution}
we also mention Rauzy \cite{Rauzy_Multiplicity} for complementary and
under much stronger assumptions \cite{Pistoia_Roman} for similar results.
Note, that extensions of Theorem \ref{thm_second_solution}
are possible under suitable flatness assumptions played
against a non vanishing  Weyl tensor
at a maximum point of $K$, thereby recovering the results of \cite{Rauzy_Multiplicity}.
On the other hand for generic functions $K$ in higher dimensions
the relevant arguments for direct minimization of $J$ or $I$
cannot be applied, as we will discuss in \cite{MZ2}.

Finally we wish to thank Prof. Daniele Bartolucci
for bringing this problem to our attention during our time
at the University of Rome "Tor Vergata".

\section{Preliminaries}\label{sec_preliminaries}
We start with providing the fundamental properties of the flow
generated by \eqref{flow_for_J},
whose short time existence is standard, cf. \cite{F},
provided $k_{u_{0}},r_{u_{0}}<0$ for an initial data, which we assume.
\begin{proposition}\label{prop_flow_properties}
For a positive flow line
$$
u:[0,T)\times C^{\infty}(M,\R_{+})\longrightarrow C^{\infty}(M,\R_{+}):(t,u_{0})\longrightarrow u
\; \text{ with } \;
k_{u_{0}},r_{u_{0}}<0
$$
generated by \eqref{flow_for_J}
and satisfying
$
k,r<0 \; \text{ on } \; [0,T)
$
there holds for all $0\leq t <T$
\begin{enumerate}[label=(\roman*)]
\item
conservation of the volume, i.e. \;
$\partial_{t}\mu_{g_{u}}=\partial_{t}\int_M u^{\frac{2n}{n-2}}d\mu_{g_0}=0.$
\item
non growth of $J$, precisely
$$
\partial_{t}J(u)
=
-\frac{2^*}{(-r)^{\frac{n}{n-2}}}|\delta J|^2(u)
\leq
-\frac{(-r)^{\frac{n}{n-2}}}{2^{*}S^{2}\|u\|_{L^{2^*}}^{\frac{4}{n-2}}}\vert \partial J \vert^{2}(u)
$$
where
$
|\delta J|^2(u)
=
\int_M|\frac{-k}{-r}R-K|^2u^{\frac{2n}{n-2}}d\mu_{g_0}
$
and $S$ is the Sobolev constant.
\item
preservation of positivity, precisely
$$\min\{ 1/C,u_{min}(0)\}\leq u(t)\leq u_{max}(0)e^{Ct},$$
where $C=C(\sup_{[0,t]}(\frac{-k}{-r}+\frac{-r}{-k}))$.
\end{enumerate}
\end{proposition}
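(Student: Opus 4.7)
The plan is to verify each item by direct computation from the flow equation $\partial_t u = -(\alpha R - K)u$, where $R = R_{g_u}$ and $\alpha := (-k)/(-r) > 0$, together with the pointwise identity $L_{g_0}u = R\, u^{(n+2)/(n-2)}$ and the normalizations $\int R u^{2^*}d\mu_{g_0} = r$, $\int K u^{2^*}d\mu_{g_0} = k$.

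For (i), differentiating and substituting the flow gives
\begin{equation*}
\partial_t \!\int_M u^{2^*} d\mu_{g_0} = -2^*\!\int_M (\alpha R - K)u^{2^*} d\mu_{g_0} = -2^*(\alpha r - k) = 0,
\end{equation*}
since $\alpha r = k$. For the equality in (ii), I would compute $\partial_t(-k) = 2^*\int K(\alpha R - K) u^{2^*} d\mu_{g_0}$ and, using self-adjointness of $L_{g_0}$ together with $L_{g_0}u \cdot u = R\, u^{2^*}$, also $\partial_t(-r) = 2\int (\alpha R - K)R\, u^{2^*} d\mu_{g_0}$. The quotient rule applied to $J = (-k)/(-r)^{n/(n-2)}$ combines these, and the algebraic identity $\tfrac{n}{n-2}\cdot 2 = 2^*$ produces the cancellation that yields
\begin{equation*}
\partial_t J = -\frac{2^*}{(-r)^{n/(n-2)}}\int_M (\alpha R - K)^2 u^{2^*} d\mu_{g_0} = -\frac{2^*}{(-r)^{n/(n-2)}}|\delta J|^2(u).
\end{equation*}
For the inequality, interpreting $|\partial J|$ as a dual $H^{-1}$-type norm and invoking the Sobolev embedding $H^1 \hookrightarrow L^{2^*}$ with constant $S$, duality gives
\begin{equation*}
|\partial J|(u) \leq \frac{2^* S}{(-r)^{n/(n-2)}}\Vert (\alpha R - K)u^{(n+2)/(n-2)}\Vert_{L^{2n/(n+2)}}.
\end{equation*}
H\"older's inequality with conjugate exponents $(\tfrac{n+2}{n},\tfrac{n+2}{2})$ applied to $|\alpha R - K|^{2n/(n+2)} u^{2^*}$ then yields
\begin{equation*}
\Vert (\alpha R - K)u^{(n+2)/(n-2)}\Vert_{L^{2n/(n+2)}}^2 \leq \Vert u\Vert_{L^{2^*}}^{4/(n-2)}\cdot|\delta J|^2(u),
\end{equation*}
and rearranging delivers the claimed inequality.

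For (iii), I recast the flow as a quasilinear parabolic equation. Substituting $L_{g_0}u = -c_n\Delta_{g_0}u - u$ into the second form of the flow produces
\begin{equation*}
\partial_t u = \alpha c_n u^{-4/(n-2)}\Delta_{g_0}u + \bigl(\alpha u^{-4/(n-2)} + K\bigr)u,
\end{equation*}
with strictly positive diffusion coefficient. The maximum principle then applies: at a spatial minimum the Laplacian term drops, so
\begin{equation*}
\partial_t u_{\min} \geq \bigl(\alpha\, u_{\min}^{-4/(n-2)} - \Vert K\Vert_\infty\bigr)u_{\min},
\end{equation*}
which is strictly positive once $u_{\min}^{4/(n-2)} < \alpha/\Vert K\Vert_\infty$, forcing $u_{\min}(t)\geq \min\{u_{\min}(0),1/C\}$ with $C$ depending on $\sup_{[0,t]}(-r)/(-k)$. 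Symmetrically at a spatial maximum,
\begin{equation*}
\partial_t u_{\max} \leq \alpha\, u_{\max}^{1-4/(n-2)} + \Vert K\Vert_\infty u_{\max},
\end{equation*}
and the polynomial $u_{\max}^{1-4/(n-2)}$ is controlled by Young's inequality when $n\geq 6$ (exponent in $[0,1)$) or by the lower bound just obtained when $3\leq n\leq 5$ (negative exponent), producing a Gr\"onwall-type estimate $\partial_t u_{\max} \leq C u_{\max}$ with $C$ depending on $\sup_{[0,t]}(\alpha + 1/\alpha)$, whence $u_{\max}(t)\leq u_{\max}(0)e^{Ct}$.

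The main obstacle lies in (iii): the degenerate/singular diffusion coefficient $u^{-4/(n-2)}$ and the dimension-dependent sign of $1 - 4/(n-2)$ in the reaction term necessitate a case split between low and high dimensions for the upper bound, and the pointwise differentiation of $u_{\min}(t)$ and $u_{\max}(t)$ must be justified by standard envelope/Rademacher arguments together with the short-time smoothness of flow lines \cite{F}.
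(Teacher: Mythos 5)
Your proposal is correct and follows essentially the same route as the paper: direct computation for (i) and for the identity in (ii), a duality--H\"older--Sobolev estimate for the inequality in (ii) (you phrase it via the $L^{2n/(n+2)}$ dual norm, while the paper uses the weighted norm $\Vert\cdot\Vert_{L^2_w}$ with $dw=u^{4/(n-2)}d\mu_{g_0}$ -- the same computation), and the maximum principle plus Gronwall for (iii), which is exactly what the paper invokes. The only superfluous element is your dimension split in the upper bound of (iii): since $u_{\max}\geq u_{\min}$, the lower bound already obtained for $u_{\min}$ controls $u_{\max}^{-4/(n-2)}$ in every dimension $n\geq 3$, so no case distinction is needed.
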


\begin{proof}
Property (i) is easy to check by direct computation, as is
\begin{eqnarray*}
\partial_{t} J(u)
&=&
-
\frac{2^{*}}{(-r)^{\frac{n}{n-2}}}
\int_M|\frac{-k}{-r}R-K|^2u^{\frac{2n}{n-2}}d\mu_{g_0}.
\end{eqnarray*}
Moreover
\begin{equation*}
\begin{split}
|\partial J|(u)
= \; &
\sup_{\Vert \varphi\Vert _{H^1(M)}\leq 1}
\int_M \partial J(u)\cdot\varphi d\mu_{g_0}
\\
= \; &
\frac{2^{*}}{(-r)^{\frac{n}{n-2}}}
\sup_{\Vert \varphi\Vert _{H^1(M)}\leq1}
\int_M(\frac{-k}{-r}R-K)u\varphi u^{\frac{4}{n-2}}d\mu_{g_0}.
\end{split}
\end{equation*}
Denote
$dw=u^{\frac{4}{n-2}}d\mu_{g_0}$
with corresponding ${L_w^2}$-norm
$$
\Vert \varphi\Vert^2 _{L_w^2}
=
\int_M\varphi^2dw
=
\int_M\varphi^2u^{\frac{4}{n-2}}d\mu_{g_{0}}.
$$
Then for any $\varphi\in H^1(M)$ by H\"older's inequality we have
$$
\Vert \varphi\Vert _{L_w^2}^2
\leq
(\int_M\varphi^\frac{2n}{n-2}d\mu_{g_0})^{\frac{n-2}{n}}
(\int_Mu^{\frac{2n}{n-2}}d\mu_{g_{0}})^{\frac{2}{n}}
\leq S^2\|u\|_{L^{2^*}}^{\frac{4}{n-2}}
\Vert\varphi\Vert _{H^1(M)}^2,
$$
whence by $L^{2}$-duality
\begin{equation*}
\begin{split}
|\partial J|(u)
\leq \; &
\frac{2^{*}}{(-r)^{\frac{n}{n-2}}}
\sup_{\Vert \varphi\Vert _{L_w^2(M)}\leq S\|u\|_{L^{2^*}}^{\frac{2}{n-2}}}
\int_M(\frac{-k}{-r}R-K)u\varphi u^{\frac{4}{n-2}}d\mu_{g_0}
\\
\leq \; &
\frac{2^{*}S\|u\|_{L^{2^*}}^{\frac{2}{n-2}}}{(-r)^{\frac{n}{n-2}}}
\int_M(\frac{-k}{-r}R-K)u\cdot \frac{(\frac{-k}{-r}R-K)u}{\Vert (\frac{-k}{-r}R-K)u\Vert _{L_w^2}}dw
\\
= \; &
\frac{2^{*}S\|u\|_{L^{2^*}}^{\frac{2}{n-2}}}{(-r)^{\frac{n}{n-2}}}
\Vert (\frac{-k}{-r}R-K)u\Vert _{L_w^2},
\end{split}
\end{equation*}
where $S$ is the Sobolev constant and (ii) is immediate. Recalling finally \eqref{flow_for_J},
the lower bound in (iii) follows from the maximum principle, while the upper one is due
to Gronwall's lemma.
\end{proof}
Note, that we will ensure from an A-B-inequality, that a priori
$\vert k \vert$ and $\vert r \vert $
are uniformly bounded away from
zero and infinity, at least on energy sublevels,
and then long time existence follows from the next two lemmata,
since we already know, that $\inf u \longrightarrow 0$ is impossible,
cf. \cite{Amacha_Regbaoui,Mayer_Scalar_Curvature_Flow}.
\begin{lemma}\label{logineq}
For any $1\leq p\leq \frac{n^2}{2(n-2)}$ we have
$$
\int_M|\frac{-k}{-r}R-K|^{p}d\mu_{g}
\leq
e^{\omega e^{\omega T}},
$$
where $\omega\geq 1$ is bounded against
$\sup_{0\leq t\leq T}\frac{1+\vert r \vert }{\vert k \vert }$.
\end{lemma}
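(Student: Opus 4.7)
The plan is to establish, for any $p$ in the stated range, a differential inequality for $y(t) := \int_{M} |f|^{p} d\mu_{g}$, where $f := \tfrac{-k}{-r} R_{g} - K$, and then integrate it via Gronwall. The two main ingredients are a semilinear parabolic equation satisfied by $f$ and a Sobolev inequality on $(M, g)$ whose constant is under control because the volume $\int u^{2^{*}} d\mu_{g_{0}} = 1$ is conserved along the flow by Proposition \ref{prop_flow_properties}(i), while $u$ itself stays in a controlled range by part (iii).

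First, the flow $\partial_{t} u = -fu$ is equivalent to the conformal evolution $\partial_{t} g = -\tfrac{4}{n-2} f g$, so the classical formula $\partial_{t} R_{g} = c_{n} \Delta_{g} f + \tfrac{4}{n-2} f R_{g}$ applies. Combined with $\partial_{t} d\mu_{g} = -\tfrac{2n}{n-2} f d\mu_{g}$ and $\int_{M} f d\mu_{g} = 0$, this lets us rewrite $\partial_{t} \rho = \partial_{t}(k/r)$ in terms of $\int_{M} K f \, d\mu_{g}$ and $\int_{M} R f \, d\mu_{g}$. Putting everything together, $f$ satisfies
\[
\partial_{t} f - c_{n} \rho \, \Delta_{g} f \;=\; \alpha(t) + \beta(t) f + \gamma \, f^{2} + \delta \, K f,
\]
where the scalar coefficients $\alpha,\beta,\gamma,\delta$ are bounded by polynomial functions of $\omega$ and of $\Vert K \Vert_{C^{0}}$.

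Second, testing this equation against $|f|^{p-2} f$ and integrating on $(M, g)$, the Laplacian term produces via integration by parts the favorable contribution $-c_{n} \rho \tfrac{4(p-1)}{p^{2}} \int_{M} \bigl|\nabla_{g} |f|^{p/2}\bigr|^{2} d\mu_{g}$, while the right hand side of the PDE together with the volume term yields
\[
\tfrac{d}{dt} y \;+\; c \, \rho \int_{M} \bigl|\nabla_{g} |f|^{p/2}\bigr|^{2} d\mu_{g}
\;\leq\;
\omega \int_{M} \bigl( |f|^{p+1} + |f|^{p} + |f|^{p-1} + 1 \bigr) d\mu_{g}.
\]
The borderline $\int |f|^{p+1} d\mu_{g}$ is then controlled by H\"older and the Sobolev embedding $H^{1}(M, g) \hookrightarrow L^{2n/(n-2)}(d\mu_{g})$ applied to $|f|^{p/2}$. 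Thanks to volume conservation and Proposition \ref{prop_flow_properties}(iii), the Sobolev constant depends only on $g_{0}$ and $\omega$, and a direct computation shows that absorption into the gradient term works exactly up to $p \leq n^{2}/(2(n-2))$, the Sobolev-critical threshold of the scheme.

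Finally, after absorption the remaining differential inequality has the iterative shape $\dot y \leq \omega y \bigl(1 + \log(1 + y)\bigr) + \omega$, which can be treated as a two-step Gronwall: a first pass controls an auxiliary lower-order $L^{q}$-norm by $e^{\omega T}$, and plugging this bound into the $L^{p}$-inequality yields the claimed $e^{\omega e^{\omega T}}$. The principal obstacle is the absorption of the borderline $f^{p+1}$ term, which sits exactly at the Sobolev threshold and dictates the upper limit $p \leq n^{2}/(2(n-2))$; a secondary technicality is to check that the dependence on the conformal factor collapses into $\omega$ bounded by $\sup_{[0,T]} (1 + |r|)/|k|$, which reduces to Proposition \ref{prop_flow_properties}(iii) and the definition of $\rho = (-k)/(-r)$.
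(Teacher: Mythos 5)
Your overall frame (test the evolution of $f=\frac{-k}{-r}R-K$ with $|f|^{p-2}f$, use Sobolev on $|f|^{p/2}$, finish with a two-step Gronwall) is the right flavor, but the step on which everything hinges is wrong as you state it. The gradient/Sobolev term controls $\bigl(\int_M|f|^{\frac{pn}{n-2}}d\mu_g\bigr)^{\frac{n-2}{n}}$, which is homogeneous of degree $p$ in $f$, whereas the borderline term $\int_M|f|^{p+1}d\mu_g$ has degree $p+1$; so ``absorption into the gradient term'' cannot work for \emph{any} $p$, and the threshold $p\le\frac{n^2}{2(n-2)}$ is not an absorption threshold. What H\"older and Young actually leave (for $p>\frac n2$) is the power-superlinear inequality $\partial_t y\le\omega\, y^{\frac{2p-n+2}{2p-n}}+\omega y+\omega$ (at $p=\frac{n^2}{2(n-2)}$ the exponent is $1+\frac{n-2}{n}$), not the log-linear shape $\dot y\le\omega y(1+\log(1+y))+\omega$ you claim; a power-superlinear inequality blows up in finite time and no Gronwall variant integrates it on $[0,T]$ without additional input. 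Moreover, by lumping the right-hand side into $\omega\int(|f|^{p+1}+\cdots)d\mu_g$ you throw away the exact coefficient $\frac{4p-2n}{n-2}$ in front of the borderline term, which is the one structural fact the proof lives on.

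The paper's argument closes precisely through that coefficient and through a \emph{time-integrated}, not pointwise-in-time, auxiliary bound. First one takes $p=\frac n2$, where $\frac{4p-2n}{n-2}=0$, so the borderline term is absent and a linear Gronwall gives $\int_M|f|^{\frac n2}d\mu_g\le\omega e^{\omega t}$ (for $n=3$ the leftover $\omega\int_M|f|^{2}d\mu_g$ is handled via the decay of $J$, i.e. $\int_0^t\int_M|f|^2d\mu_g\,ds\le\omega J(u_0)$ from Proposition \ref{prop_flow_properties}(ii)). Integrating that same inequality in time then yields $\int_0^t\bigl(\int_M|f|^{\frac{n^2}{2(n-2)}}d\mu_g\bigr)^{\frac{n-2}{n}}ds\le\omega e^{\omega t}$, which is exactly the superlinear factor $y^{\frac{n-2}{n}}$ of the top-level inequality; dividing by $y+\omega$ and integrating $\log(y+\omega)$ gives $y\le e^{\omega e^{\omega t}}$. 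This also explains the admissible range: $\frac{n^2}{2(n-2)}=\frac{n}{n-2}\cdot\frac n2$ is the exponent reached by the Sobolev term of the $p=\frac n2$ step. A pointwise bound on a lower norm, which is all your ``first pass'' provides, does not substitute for this: at $p=\frac{n^2}{2(n-2)}$ interpolation gives exactly $\int_M|f|^{p+1}d\mu_g\le\bigl(\int_M|f|^{\frac{pn}{n-2}}d\mu_g\bigr)^{\frac{n-2}{n}}\Vert f\Vert_{L^{n/2}(d\mu_g)}$ with no slack, and since $\Vert f\Vert_{L^{n/2}}\le\omega e^{\omega t}$ grows, the product cannot be absorbed into the gradient term on all of $[0,T]$. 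You need to restore the $p=\frac n2$ cancellation and the time-integrated Sobolev control to make your scheme work.
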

\begin{proof}
Using (\ref{flow_for_J}) by direct calculation, we have
\begin{equation*}
\begin{split}
\partial_t & \int_M|\frac{-k}{-r}R-K|^pd\mu_g \\
= \; & -\frac{4(p-1)}{p}c_n(\frac{-k}{-r})\int_M|\nabla_g|\frac{-k}{-r}R-K|^{\frac{p}{2}}|^2d\mu_g \\
&
+
\frac{4p-2n}{n-2}\int_M|\frac{-k}{-r}R-K|^{p}(\frac{-k}{-r}R-K)d\mu_g
+\frac{4p}{n-2}\int_M|\frac{-k}{-r}R-K|^pKd\mu_g
\\
&
-
\frac{2p}{-k}\int_M|\frac{-k}{-r}R-K|^2d\mu_g\cdot\int_M|\frac{-k}{-r}R-K|^pd\mu_g
\\
&
-
\frac{2p}{-k}\int_M|\frac{-k}{-r}R-K|^2d\mu_g\cdot
\int_M|\frac{-k}{-r}R-K|^{p-2}(\frac{-k}{-r}R-K)Kd\mu_g
\\
&
+
\frac{4p}{n-2}\cdot\frac1{-k}\int_M(\frac{-k}{-r}R-K)Kd\mu_g\cdot\int_M|\frac{-k}{-r}R-K|^pd\mu_g
\\
&
+
\frac{4p}{n-2}\cdot\frac1{-k}\int_M(\frac{-k}{-r}R-K)Kd\mu_g
\cdot
\int_M|\frac{-k}{-r}R-K|^{p-2}(\frac{-k}{-r}R-K)Kd\mu_g.
\end{split}
\end{equation*}
Applying the Sobolev and  H\"older inequalities, we then estimated
\begin{equation*}
\begin{split}
\partial_{t} &  \int_M|\frac{-k}{-r}R -K|^pd\mu_g
+
\omega(\int_M|\frac{-k}{-r}R-K|^{\frac{pn}{n-2}}d\mu_g)^\frac{n-2}{n}
\\
\leq \;&
\frac{4p-2n}{n-2}\int_M|\frac{-k}{-r}R-K|^{p+1}d\mu_g
+
C\int_M|\frac{-k}{-r}R-K|^pd\mu_g
\\
&
-
\frac{2p}{-k}\int_M|\frac{-k}{-r}R-K|^2d\mu_g\cdot\int_M|\frac{-k}{-r}R-K|^pd\mu_g
\\
&
-
\frac{2p}{-k}\int_M|\frac{-k}{-r}R-K|^2d\mu_g\cdot
\int_M|\frac{-k}{-r}R-K|^{p-2}(\frac{-k}{-r}R-K)Kd\mu_g
\\
&
+
\frac{4p}{n-2}\cdot\frac1{-k}\int_M(\frac{-k}{-r}R-K)Kd\mu_g\cdot\int_M|\frac{-k}{-r}R-K|^pd\mu_g
\\
&
+
\frac{4p}{n-2}\cdot\frac1{-k}\int_M(\frac{-k}{-r}R-K)Kd\mu_g\cdot\int_M|\frac{-k}{-r}R-K|^{p-2}(\frac{-k}{-r}R-K)Kd\mu_g,
\end{split}
\end{equation*}
whence
\begin{equation}\label{logequ}
\begin{split}
\partial_{t}   \int_M|\frac{-k}{-r}R & -K|^pd\mu_g
+
\omega(\int_M|\frac{-k}{-r}R-K|^{\frac{pn}{n-2}}d\mu_g)^\frac{n-2}{n}
\\
\leq \; &
\frac{4p-2n}{n-2}\int_M|\frac{-k}{-r}R-K|^{p+1}d\mu_g
+
\omega\int_M|\frac{-k}{-r}R-K|^pd\mu_g
\\
&
+
\omega\int_M|\frac{-k}{-r}R-K|^{2}d\mu_g
+
\omega.
\end{split}
\end{equation}
Let $p=\frac{n}{2}$. We then estimate in case $n \geq 4$
$$
\partial_{t}\int_M|\frac{-k}{-r}R-K|^{\frac{n}{2}}d\mu_g
\leq
\omega\int_M|\frac{-k}{-r}R-K|^{\frac{n}{2}}d\mu_g+\omega,
$$
whence we obtain the logarithmic type estimate
$$
\int_M|\frac{-k}{-r}R-K|^{\frac{n}{2}}d\mu_g
\leq
(\int_M|\frac{-k}{-r}R-K|^{\frac{n}{2}}d\mu_g|_{t=0}
+
\omega)e^{\omega t},
$$
while for $n=3$ we have
$$
\partial_{t}\int_M|\frac{-k}{-r}R-K|^{\frac{3}{2}}d\mu_g
\leq
\omega\int_M|\frac{-k}{-r}R-K|^{2}d\mu_g+\omega,
$$
whence
\begin{equation*}
\begin{split}
\int_M|\frac{-k}{-r}R-K|^{\frac32}d\mu_g
& -
\int_M|\frac{-k}{-r}R-K|^{\frac32}d\mu_g|_{t=0}
\\
\leq \; &
\omega\int_0^t\int_M|\frac{-k}{-r}R-K|^2d\mu_gdt
+
\omega t\leq \omega J(u_0)+\omega t
\end{split}
\end{equation*}
and therefore
$$\int_M|\frac{-k}{-r}R-K|^{\frac32}d\mu_g\leq \omega t+\omega.$$
In conclusion for any $n\geq 3$ we get
$$\int_M|\frac{-k}{-r}R-K|^{\frac n2}d\mu_g\leq \omega e^{\omega t}.$$
Letting $p=\frac n2$ in (\ref{logequ}) and integrating from $0$ to $t$, we therefore have
\begin{equation*}%\label{logequ2}
\int_0^t(\int_M|\frac{-k}{-r}R-K|^{\frac{n^2}{2(n-2)}}d\mu_g)^{\frac{n-2}{n}}dt\leq \omega e^{\omega t}+\omega.
\end{equation*}
Returning to (\ref{logequ}) and applying the H\"older's
and Young's inequality to the term
$$\int_M|\frac{-k}{-r}R-K|^{p+1}d\mu_g,$$
we obtain
\begin{equation*}
\begin{split}
\partial_{t}\int_M|\frac{-k}{-r}R   & \; -K|^pd\mu_g
\leq
\omega(\int_M|\frac{-k}{-r}R-K|^pd\mu_g)^{\frac{2p-n+2}{2p-n}} \\
&  +
\omega\int_M|\frac{-k}{-r}R-K|^pd\mu_g+\omega\int_M|\frac{-k}{-r}R-K|^2d\mu_g+\omega.
\end{split}
\end{equation*}
Taking
$p=\frac{n^2}{2(n-2)}$
and setting
$y=\int_M|\frac{-k}{-r}R-K|^{\frac{n^2}{2(n-2)}}d\mu_g$
we then have
$$\partial_{t}y\leq \omega y^{1+\frac{n-2}{n}}+\omega y+\omega$$
and therefore
$$\log\frac{y+\omega}{(y+\omega)|_{t=0}}\leq \omega \int_0^ty^{\frac{n-2}{n}}dt+\omega t,$$
which implies
$$\int_M|\frac{-k}{-r}R-K|^{\frac{n^2}{2(n-2)}}d\mu_g\leq \omega e^{\omega e^{\omega t}}.$$
The assertion follows.
\end{proof}

\begin{lemma}\label{lem_long_time_existence}
A flow line, for which $\frac{1+\vert r \vert }{\vert k \vert }$ is upper bounded, exists for all time.
\end{lemma}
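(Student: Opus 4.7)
The plan is to bootstrap the a priori controls from Proposition \ref{prop_flow_properties} and Lemma \ref{logineq} to uniform parabolic $C^{2,\alpha}$-bounds on every finite subinterval $[0,T)$ of the maximal existence interval, and then to extend the flow past $T$ by the short-time existence result cited just before Proposition \ref{prop_flow_properties}. Since this extension can be carried out for any $T$ on which the hypothesis is in force, no finite maximal time can arise. The first step is to pin down $k$ and $r$: the assumption $\sup_{[0,T)}\frac{1+|r|}{|k|}\le C$ gives $|k|\ge 1/C$ and $|r|\le C|k|$, while the normalization $\|u\|_{L^{2^*}}=1$ yields $|k|\le \sup|K|$, and the monotonicity $J(u(t))\le J(u_0)$ from Proposition \ref{prop_flow_properties}(ii) forces $|r|^{n/(n-2)}\ge |k|/J(u_0)\ge (CJ(u_0))^{-1}$. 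Hence both $|k|$ and $|r|$ are pinched between positive constants on $[0,T)$, so $\frac{-k}{-r}+\frac{-r}{-k}$ is uniformly bounded, and Proposition \ref{prop_flow_properties}(iii) then supplies pointwise bounds $c_1\le u(t,x)\le c_2(T)$ on $[0,T]\times M$.

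Next, Lemma \ref{logineq} at the critical exponent $p=n^2/(2(n-2))$ yields a finite $L^p(M,d\mu_g)$-bound on $\frac{-k}{-r}R-K$; using the equivalence of $d\mu_g$ and $d\mu_{g_0}$ granted by the $L^{\infty}$-bounds on $u$, and absorbing the bounded factor $\frac{-r}{-k}$, this produces $\|R\|_{L^p(M,d\mu_{g_0})}\le C(T)$ for the scalar curvature $R=R_{g_u}$. The conformal identity $L_{g_0}u=R\,u^{(n+2)/(n-2)}$ together with the pointwise bounds on $u$ then give an $L^p$-bound on $L_{g_0}u$; standard $W^{2,p}$-elliptic regularity returns $\|u(t,\cdot)\|_{W^{2,p}}\le C(T)$. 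Since $p=n^2/(2(n-2))>n/2$, Sobolev embedding yields a uniform H\"older bound $\|u(t,\cdot)\|_{C^{0,\alpha}}\le C(T)$ for some $\alpha\in(0,1)$, and iterating the elliptic equation with Schauder estimates promotes this to $C^{2,\alpha}$ spatial regularity. The evolution equation $\partial_t u=-(\frac{-k}{-r}R-K)u$ then transfers the required spatial regularity to joint parabolic smoothness by standard parabolic bootstrap.

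With a uniform $C^{2,\alpha}$-bound on $u$ and $|k|,|r|$ bounded away from zero throughout $[0,T)$, one selects any $t_0\in [0,T)$ with $T-t_0$ smaller than the short-time existence interval guaranteed for data of this norm; restarting the flow from $u(t_0,\cdot)$ strictly extends the solution past $T$, contradicting the supposition that a finite maximal time exists. The main obstacle is the opening of the bootstrap: the critical nonlinearity $R\,u^{(n+2)/(n-2)}$ requires an $L^p$-integrability of $R$ with $p>n/2$, and it is precisely the choice $p=n^2/(2(n-2))$ in Lemma \ref{logineq}, barely exceeding $n/2$, that makes Sobolev embedding into H\"older spaces available and unlocks the iteration.
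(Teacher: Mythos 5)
Your reduction of the hypothesis to uniform two-sided bounds on $\vert k\vert$ and $\vert r\vert$ (via $\Vert u\Vert_{L^{2^*}}=1$, the monotonicity of $J$ from Proposition \ref{prop_flow_properties}(ii), and then the pointwise bounds on $u$ from Proposition \ref{prop_flow_properties}(iii)) is fine and matches what the paper leaves implicit, and so is the step from Lemma \ref{logineq} to $\Vert L_{g_0}u(t,\cdot)\Vert_{L^{p}}\leq C(T)$ and hence $\Vert u(t,\cdot)\Vert_{W^{2,p}}\leq C(T)$ with $p>\frac n2$. The gap is in the next step: you cannot ``iterate the elliptic equation with Schauder estimates'' to reach spatial $C^{2,\alpha}$. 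The identity $L_{g_0}u=R\,u^{\frac{n+2}{n-2}}$ is the definition of $R=R_{g_u}$, so its right-hand side is exactly as regular as $\Delta_{g_0}u$ and carries no independent information; the only control on $R$ is the $L^{p}$ bound of Lemma \ref{logineq}, which is far from H\"older, so elliptic Schauder theory never gets started and the bootstrap is stuck at $W^{2,p}$. Note also that for $n\geq 5$ one has $p\leq\frac{n^{2}}{2(n-2)}<n$, so $W^{2,p}$ does not even embed into $C^{1}$; the most the available bounds give in space is the $C^{0,\alpha}$ estimate with $\alpha=2-\frac np$, which is precisely where the paper stops.

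The second missing ingredient is regularity in time: you never use the bound $\int_M\vert\partial_t u\vert^{p}d\mu_{g_0}\leq C(\tilde\omega,T)$, and ``standard parabolic bootstrap'' cannot be invoked starting from time-slice-wise spatial estimates with a coefficient $\frac{-k}{-r}R-K$ that is only $L^{p}$ in space. The paper's route is to combine the spatial Morrey estimate with the $L^{p}$ bound on $\partial_t u$ through an averaging over balls of radius $\sqrt{t_1-t_2}$, producing a uniform space--time H\"older bound $\vert u(t_1,x)-u(t_2,y)\vert\leq C(\tilde\omega,T)(d(x,y)^{\alpha}+\vert t_1-t_2\vert^{\alpha/2})$; together with the two-sided $L^\infty$ bounds on $u$ this is the continuation criterion for the quasilinear flow \eqref{flow_for_J} (the equation, suitably rewritten, becomes uniformly parabolic with H\"older coefficients, and only then do parabolic Schauder estimates and restarting from $t_0$ close to $T$ apply). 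To repair your argument you should replace the elliptic Schauder iteration by this space--time H\"older estimate, or otherwise supply an independent argument giving H\"older control of $R$, which the present bounds do not.
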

\begin{proof}
By Lemma \ref{logineq} we have
$$\int_M|R|^pd\mu_g\leq e^{\omega e^{\omega T}}$$
for any $1\leq p\leq \frac{n^2}{2(n-2)}$.
Thus, denoting by $\tilde{\omega}\geq 1$ any quantity bounded against
$$
\sup_{0\leq t \leq T}
(
\Vert u \Vert_{L^{\infty}}
+
\Vert 1/u \Vert_{L^{\infty}}
+
\frac{1+\vert r \vert }{\vert k \vert }
),
$$
we deduce
$$
\int_M|\Delta_{g_0}u|^pd\mu_g\leq e^{\tilde{\omega}e^{\tilde{\omega}T}},
$$
since $u$ is time-dependently bounded.
Then Morrey's inequality shows
$$|u(t,x_1)-u(t,x_2))|\leq C(\tilde{\omega},T)d(x_1, x_2)^\alpha$$
for $x_1, x_2\in M$, $t\in [0, T)$
and
$$\alpha=2-\frac np , \;\frac n2<p<\min\{\frac{n^2}{2(n-2)}, n\}.$$
Moreover from Lemma \ref{logineq}
$$\int_M|\partial_{t} u|^pd\mu_{g_0}\leq C(\tilde{\omega},T).$$
We then obtain for $0<t_{1}-t_{2}<1$
\begin{eqnarray*}
|u(t_{1},x)-u(t_{2},x)|
&=&
\frac1{|B_{\sqrt{t_{1}-t_{2}}}(x)|}\int_{B_{\sqrt{t_{1}-t_{2}}}(x)}|u(t_{1},x)-u(t_{2},x)|d\mu_{g_0}(y)
\\
&=&
\frac1{|B_{\sqrt{t_{1}-t_{2}}}(x)|}\int_{B_{\sqrt{t_{1}-t_{2}}}(x)}|u(t_{1},x)-u(t_{1},y)|d\mu_{g_0}(y)
\\
&&
+
\frac1{|B_{\sqrt{t_{1}-t_{2}}}(x)|}\int_{B_{\sqrt{t_{1}-t_{2}}}(x)}|u(t_{1},y)-u(t_{2},y)|d\mu_{g_0}(y)
\\
&&
+
\frac1{|B_{\sqrt{t_{1}-t_{2}}}(x)|}
\int_{B_{\sqrt{t_{1}-t_{2}}}(x)}|u(t_{2},y)-u(t_{2},x)|d\mu_{g_0}(y)
\\
&=&
I_1+I_2+I_3.
\end{eqnarray*}
The first term on the right hand of the above equality yields to
$$
I_1
\leq
C(\tilde{\omega},T)|t_{1}-t_{2}|^{-\frac 2n}\int_{B_{\sqrt{t_{1}-t_{2}}}(x)}|x-y|^\alpha d\mu_{g_0}(y)
\leq
C(\tilde{\omega},T)|t_{1}-t_{2}|^{\frac \alpha 2}
$$
and similarly
$I_3\leq C(\tilde{\omega},T)|t_{1}-t_{2}|^{\frac \alpha 2}$.
We finally estimate
\begin{eqnarray*}
I_2
&\leq &
C|t_{1}-t_{2}|^{-\frac n2}
\sup_{t_{2}\leq t\leq t_{1}}
\int_{B_{\sqrt{t_{1}-t_{2}}}(x)}|\frac{\partial u}{\partial t}||t_{1}-t_{2}|d\mu_{g_0}(y)
\\
&\leq &
C|t_{1}-t_{2}|^{-\frac n2+1}
\sup_{t_{2}\leq t\leq t_{1}}(\int_{B_{\sqrt{t_{1}-t_{2}}}(x)}|\frac{\partial u}{\partial t}|^pd\mu_{g_0})^{\frac1p}
(\int_{B_{\sqrt{t_{1}-t_{2}}}(x)}\mathbb{1})^{1-\frac1p}
\\
&\leq &
C(\tilde{\omega},T)|t_{1}-t_{2}|^{1-\frac{n}{2p}}
=
C(\tilde{\omega},T)|t_{1}-t_{2}|^{\frac\alpha2},
\end{eqnarray*}
and long time existence follows immediately.
\end{proof}

\section{Existence}%\label{sec_existence}
We essentially show Theorem \ref{thm_minimize_J},
whose proof is found at this section's end.
\subsection{The A-B-inequality}
Recalling \eqref{necessary_condition_nu_1_positive},
we start with proving, that the A-B-conditions (i) and (ii) below
imply some A-B-inequality \eqref{AB}.
\begin{proposition}\label{prop_A_B_inequality_from_A_B_conditions}
There exists $\epsilon>0$ such, that
for any $K\in C^{\infty}(M)$, if
$$
\{ K\geq 0 \} = \Omega_{K} \subset \subset \Omega \subset \subset D
$$
with smooth $\Omega,D\subset M$ and
\begin{enumerate}[label=(\roman*)]
\item
$
\sup_M K
<
\epsilon
[
\; dist^{2\frac{n-1}{n-2}}(\partial \Omega,\partial D)
(\frac{\nu_{1}(D)}{\nu_{1}(D)+1})^{\frac{n}{n-2}}
\;
]
\inf_{M\setminus \Omega} (-K)
$
\item $\nu_{1}(D)=\nu_{1}(L_{g_{0}},D)>0$,
\end{enumerate}
then for some constants $A,B>0$ there holds
\begin{equation}\label{AB}
\Vert u \Vert_{H^{1}}^{2}
\leq
Ar
+
B\vert k \vert^{\frac{n-2}{n}}
\; \text{ for all } \;
u\in H^{1}(M).
\end{equation}
\end{proposition}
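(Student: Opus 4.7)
The plan is to prove the A-B-inequality by a cutoff argument localising to $D$, where $\nu := \nu_{1}(D) > 0$ affords spectral control, combined with the negativity of $K$ on $M \setminus \Omega$. Since $R_{g_{0}} \equiv -1$, one has $r = c_{n}\int |\nabla u|^{2}\, d\mu_{g_{0}} - \int u^{2}\, d\mu_{g_{0}}$, so the target $\|u\|_{H^{1}}^{2} \leq A r + B |k|^{(n-2)/n}$ reduces, up to universal constants, to establishing
$$
\int_{M} u^{2}\, d\mu_{g_{0}} \leq A' r + B' |k|^{\frac{n-2}{n}}.
$$
I will obtain this by decomposing $\int u^{2} = \int_{\Omega} u^{2} + \int_{M \setminus \Omega} u^{2}$ and controlling each region by its natural mechanism, absorbing the resulting cross-terms via the smallness hypothesis (i).

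For the region $M \setminus \Omega$, the bound $K \leq -\kappa$ with $\kappa := \inf_{M \setminus \Omega}(-K) > 0$ together with the splitting of $k$ yields
$$
\int_{M \setminus \Omega} u^{2^{*}}\, d\mu_{g_{0}} \leq \kappa^{-1}\bigl(|k| + \sup K \cdot \int_{\Omega} u^{2^{*}}\, d\mu_{g_{0}}\bigr),
$$
and a H\"older step with pair $(n/2, n/(n-2))$ together with subadditivity of $x \mapsto x^{(n-2)/n}$ upgrades this to
$$
\int_{M \setminus \Omega} u^{2}\, d\mu_{g_{0}} \lesssim \kappa^{-(n-2)/n}\Bigl(|k|^{(n-2)/n} + (\sup K)^{(n-2)/n}\bigl(\int_{\Omega} u^{2^{*}}\, d\mu_{g_{0}}\bigr)^{(n-2)/n}\Bigr),
$$
producing the targeted $|k|^{(n-2)/n}$ plus a cross-term to be absorbed. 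For the region $\Omega$ I fix a smooth cutoff $\eta$ with $\eta \equiv 1$ on $\Omega$, $\mathrm{supp}(\eta) \subset D$, $|\nabla \eta| \lesssim 1/d$ and $|\eta \Delta \eta| \lesssim 1/d^{2}$ for $d := \mathrm{dist}(\partial\Omega, \partial D)$; then $\eta u \in H^{1}_{0}(D)$, the Rayleigh characterisation of $\nu$ gives $(1+\nu)\int (\eta u)^{2} \leq c_{n}\int |\nabla(\eta u)|^{2}$, and the identity
$$
\int |\nabla(\eta u)|^{2}\, d\mu_{g_{0}} = \int \eta^{2} |\nabla u|^{2}\, d\mu_{g_{0}} - \int u^{2} \eta\Delta\eta\, d\mu_{g_{0}}
$$
(from integration by parts of the cross-term $\eta u \nabla\eta \cdot \nabla u$) relates the right-hand side to $c_{n}\int |\nabla u|^{2} = r + \int u^{2}$ modulo a cutoff error of size $O(d^{-2})\int u^{2}$. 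The coupling factor $(\int_{\Omega} u^{2^{*}})^{(n-2)/n}$ is then dominated by $\|\eta u\|_{L^{2^{*}}}^{2}$, which the global Sobolev inequality combined with the eigenvalue inequality controls by $\lesssim \tfrac{1+\nu}{\nu}\bigl(r + (1 + d^{-2})\int u^{2}\bigr)$.

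Assembling the two regional estimates and applying Young's inequality, the cross-term contributes a coefficient of $\int u^{2}$ on the right of the form $C (\sup K / \kappa)^{(n-2)/n} \cdot \tfrac{1+\nu}{\nu}$ multiplied by a suitable power of $(1 + d^{-2})$, which after the H\"older interpolation on $M \setminus \Omega$ produces the exponent $2(n-1)/(n-2)$ in $d$ and the exponent $n/(n-2)$ in $(1+\nu)/\nu$; the competing good coefficient available from the eigenvalue step is of the form $\nu/(1+\nu)$. Hypothesis (i) is designed precisely so that the bad coefficient is strictly smaller than the good one, enabling absorption of $\int u^{2}$ into the left-hand side and leaving $A r + B |k|^{(n-2)/n}$ on the right. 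The main obstacle is the precise bookkeeping of constants so that the exponents $d^{2(n-1)/(n-2)}$ and $(\nu/(1+\nu))^{n/(n-2)}$ in (i) emerge naturally from the interplay of the cutoff error $\sim d^{-2}$, the spectral gap $(1+\nu)/\nu$, and the H\"older/Sobolev interpolations on the two regions; engineering these exponents sharply, rather than merely their orders of magnitude, is the technical heart of the argument, whereas the structural decomposition is routine.
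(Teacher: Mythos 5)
Your structural skeleton coincides with the paper's: a cutoff adapted to $D$, the Dirichlet eigenvalue $\nu=\nu_{1}(D)$ for the inner region, the strict negativity of $K$ on $M\setminus\Omega$ to trade $\Vert u\Vert_{L^{2^{*}}(M\setminus\Omega)}$ for $\vert k\vert^{\frac{n-2}{n}}$ plus a $\sup_{M} K$ cross-term, and absorption of that cross-term by the smallness hypothesis; your reduction to $\int u^{2}\leq A'r+B'\vert k\vert^{\frac{n-2}{n}}$ and the integration-by-parts identity for $\int\vert\nabla(\eta u)\vert^{2}$ are fine.

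The genuine gap sits exactly where you defer to "bookkeeping": the exponents $d^{\,2\frac{n-1}{n-2}}$ and $(\frac{\nu}{1+\nu})^{\frac{n}{n-2}}$ of hypothesis (i) do not emerge from the steps you describe. First, your chain pays the cutoff price $d^{-2}$ twice: once in the $\Omega$-estimate (which, after absorbing $\frac{1}{1+\nu}\int_{\Omega}u^{2}$, reads roughly $\int_{\Omega}u^{2}\leq\frac{1}{\nu}r+\frac{C(1+\nu)}{\nu d^{2}}\int_{M\setminus\Omega}u^{2}$), and again in your Sobolev bound for the coupling factor $(\int_{\Omega}u^{2^{*}})^{\frac{n-2}{n}}\lesssim\frac{1+\nu}{\nu}\bigl(r+(1+d^{-2})\int u^{2}\bigr)$; composing the two regional estimates multiplies these factors, so absorption of $\int u^{2}$ forces a smallness of order $\sup_{M}K\lesssim\kappa\,(\frac{\nu}{1+\nu})^{\frac{n}{n-2}}d^{\frac{4n}{n-2}}$, and even after removing the second occurrence (the global Sobolev inequality bounds $\Vert u\Vert_{L^{2^{*}}(\Omega)}^{2}$ by $\Vert u\Vert_{H^{1}}^{2}$ with no cutoff) you still need $\sup_{M}K\lesssim\kappa\,(\frac{\nu}{1+\nu})^{\frac{n}{n-2}}d^{\frac{2n}{n-2}}$. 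Second, nothing in your outline can generate the fractional exponent $\frac{2(n-1)}{n-2}<\frac{2n}{n-2}$: your H\"older step on $M\setminus\Omega$ (from $L^{2^{*}}$ down to $L^{2}$) carries the $d$-independent volume $\vert M\setminus\Omega\vert^{\frac{2}{n}}=O(1)$. In the paper the extra gain of $d^{\frac{2}{n-2}}$ comes from pairing the single $d^{-2}$ with the small volume of the transition annulus: the cutoff error is estimated by H\"older on $D\setminus\Omega$, producing $\frac{\vert D\setminus\Omega\vert^{2/n}}{d^{2}}\Vert u\Vert_{L^{2^{*}}(M\setminus\Omega)}^{2}$, and this $L^{2^{*}}(M\setminus\Omega)$-quantity is absorbed directly by $B\vert k\vert^{\frac{n-2}{n}}$ through $\inf_{M\setminus\Omega}(-K)$, cf. \eqref{L_g_0_estimate} and \eqref{int_K_estimate_on_Omega}; raising $\frac{\vert D\setminus\Omega\vert^{2/n}}{d^{2}}$ to the power $\frac{n}{n-2}$ is what yields the stated $d^{-2\frac{n-1}{n-2}}$. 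Since for small $d$ hypothesis (i) permits values of $\sup_{M}K$ far larger than your absorption scheme can handle, your argument as written proves only a weaker statement (with $d^{\frac{2n}{n-2}}$, or $d^{\frac{4n}{n-2}}$ as literally organized, in place of $d^{2\frac{n-1}{n-2}}$); to close the stated proposition you must route the cutoff error through the $\vert D\setminus\Omega\vert$-H\"older and $\inf_{M\setminus\Omega}(-K)$ mechanism rather than through $\int_{M\setminus\Omega}u^{2}$ estimated with the $O(1)$ volume of $M\setminus\Omega$.
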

We say, that an A-B-inequality holds globally, if \eqref{AB} is satisfied, and likewise holding on $X$ means, that \eqref{AB} holds on $X$ instead of $H^1(M)$.

\begin{proof}
Recalling \eqref{r_and_k_definitions} and by rescaling we have to show
\begin{equation*}
\int_ML_{g_0}uud\mu_{g_0}
+
B\vert \int_MKu^{\frac{2n}{n-2}}d\mu_{g_0}\vert^{\frac{n-2}{n}}
\geq
\epsilon_0
\; \text{ on } \;
\{ \Vert \cdot \Vert_{H^{1}} =1\}.
\end{equation*}
Let $D \subset M$ satisfying
$$
\Omega_K\subset\subset\Omega\subset\subset D
\; \text{ and } \;
\nu_{1}(D)=\nu_{1}(L_{g_{0}},D)>0
$$
and choose a cut off function
$\eta\in C_0^\infty(D)$
with
$0\leq\eta\leq 1$ and $\eta\equiv1$
in
$\Omega$ and $|\nabla\eta|\leq C/d$, where
$d={\rm dist}(\partial \Omega, \partial D)$. We then decompose
$$u=\eta u+(1-\eta)u,\; \eta u \in H^1_{0}(D)$$
and observe, that from $\nu_{1}(D)>0$
$$
c_{n}\int_{M} \vert \nabla (\eta u) \vert^{2}d\mu_{g_{0}}-\int_{M} \vert \eta u \vert^{2}d\mu_{g_{0}}
=
\langle L_{g_{0}}(\eta u),\eta u \rangle
\geq
\nu_{1}(D)\int_{M} \vert \eta u \vert^{2}d\mu_{g_{0}} ,
$$
whence
$
c_{n}\int_{M} \vert \nabla (\eta u) \vert^{2}d\mu_{g_{0}}
\geq
(\nu_{1}(D)+1)\int_{M} \vert \eta u \vert^{2}d\mu_{g_{0}}
$
and therefore
$$
\langle L_{g_{0}}(\eta u),\eta u\rangle
\geq
\frac{c_{n}\nu_{1}(D)}{ \nu_{1}(D)+1}\int \vert \nabla (\eta u) \vert^{2}d\mu_{g_{0}} .
$$
As a consequence
\begin{equation}\label{L_g_0_estimate_to_get_A_B_inequality}
\begin{split}
\langle L_{g_0}u, u\rangle
= \; &
\int_ML_{g_0}(\eta u)(\eta u)d\mu_{g_0}
+
2\int_ML_{g_0}(\eta u)\big((1-\eta)u\big)d\mu_{g_0} \\
& +
\int_ML_{g_0}\big((1-\eta) u\big)\big((1-\eta)u\big)d\mu_{g_0} \\
\geq \; &
\frac{c_n\nu_{1}(D)}{\nu_{1}(D)+1}\int_M|\nabla(\eta u)|^2d\mu_{g_0}
+
c_n\int_M(1-\eta^2)|\nabla u|^2d\mu_{g_0}
\\
&
-
2c_n\int_M u\nabla\eta\cdot\nabla (\eta u)d\mu_{g_0}
+
c_n\int_M|\nabla \eta|^2u^2d\mu_{g_0}
\\
&
-\int_M(1-\eta^2)u^2d\mu_{g_0}.
\end{split}
\end{equation}
From H\"older's inequality we then have
\begin{equation*}
\begin{split}
\bigg|\int_M u\nabla\eta      &        \cdot\nabla(\eta u)d\mu_{g_0}\bigg|
\leq
\frac{C}{d}\bigg(\int_{D\setminus\Omega}u^2d\mu_{g_0}\bigg)^{\frac12}
\bigg(\int_{D\setminus\Omega}|\nabla (\eta u)|^2d\mu_{g_0}\bigg)^{\frac{1}{2}}
\\
\leq \; &
\frac{C}{d}\bigg(\int_{D\setminus\Omega}u^{\frac{2n}{n-2}}d\mu_{g_0}\bigg)^{\frac{n-2}{2n}}
\bigg(\int_{D\setminus\Omega}d\mu_{g_0}\bigg)^{\frac1n}
\bigg(\int_{D\setminus\Omega}|\nabla (\eta u)|^2d\mu_{g_0}\bigg)^{\frac{1}{2}}
\\
\leq \; &
\frac{C|D\setminus\Omega|^{\frac1n}}{d}
\bigg(\int_{D\setminus\Omega}u^{\frac{2n}{n-2}}d\mu_{g_0}\bigg)^{\frac{n-2}{2n}}
\bigg(\int_{D\setminus\Omega}|\nabla (\eta u)|^2d\mu_{g_0}\bigg)^{\frac{1}{2}}
\end{split}
\end{equation*}
and with $C_{3}>0$
\begin{equation*}
\begin{split}
\bigg|\int_{M}(1            & \;      -\eta^2)u^2d\mu_{g_0}\bigg|
\leq
\int_{M\setminus\Omega}u^2d\mu_{g_0} \\
\leq \; &
\bigg(\int_{M\setminus\Omega}u^{\frac{2n}{n-2}}d\mu_{g_0}\bigg)^{\frac{n-2}{n}}
\bigg(\int_{M\setminus\Omega}d\mu_{g_0}\bigg)^{\frac2n}
\leq
C_3\bigg(\int_{M\setminus\Omega}u^{\frac{2n}{n-2}}d\mu_{g_0}\bigg)^{\frac{n-2}{n}}.
\end{split}
\end{equation*}
Plugging these into \eqref{L_g_0_estimate_to_get_A_B_inequality},
whose second last summand is positive anyway, we obtain from Young's inequality
with a constant $C_2>0$ and for any $0<a<1$
\begin{equation}\label{L_g_0_estimate}
\begin{split}
\langle L_{g_0}u   &    ,u\rangle
\geq
\frac{c_n\nu_{1}(D)}{\nu_{1}(D)+1}
\int_M|\nabla(\eta u)|^2d\mu_{g_0}
\\
& -
\frac{2 C c_n|D\setminus\Omega|^{\frac{1}{n}} }{d}
\bigg(\int_{D\setminus\Omega}u^{\frac{2n}{n-2}}d\mu_{g_0}\bigg)^{\frac{n-2}{2n}}
\bigg(\int_{D\setminus\Omega}|\nabla (\eta u)|^2d\mu_{g_0}\bigg)^{\frac{1}{2}}
\\
&
-
C_3\bigg(\int_{M\setminus\Omega}u^{\frac{2n}{n-2}}d\mu_{g_0}\bigg)^{\frac{n-2}{n}}
\\
\geq \; &
\frac{c_n(1-a)\nu_{1}(D)}{\nu_{1}(D)+1}\int_M|\nabla (\eta u)|^2d\mu_{g_0}
\\
&
-
\bigg(
C_{2}
\frac{\nu_{1}(D)+1}{\nu_{1}(D)}
\frac{|D\setminus\Omega|^{\frac{2}{n}} }{ad^2}+C_3\bigg)
\bigg(\int_{M\setminus\Omega}u^{\frac{2n}{n-2}}d\mu_{g_0}\bigg)^{\frac{n-2}{n}}.
\end{split}
\end{equation}
On the other hand, recalling $\{ K\geq 0 \}=\Omega_{K} \subset \subset \Omega $,
\begin{equation}\label{int_K_estimate_on_Omega}
\begin{split}
\vert \int_M K    &      u^{\frac{2n}{n-2}}d\mu_{g_0}\vert^{\frac{n-2}{n}}
\geq \;
\vert
\int_{\Omega^{c}_{K}} Ku^{\frac{2n}{n-2}}d\mu_{g_0}
\vert^{\frac{n-2}{n}}
-
\vert \int_{\Omega_{K}}Ku^{\frac{2n}{n-2}}d\mu_{g_0}\vert^{\frac{n-2}{n}}
\\
\geq \; &
\inf_{M\setminus\Omega}|K|^{\frac{n-2}{n}}
\vert\int_{M\setminus\Omega}u^{\frac{2n}{n-2}}d\mu_{g_0}\vert^{\frac{n-2}{n}}
-
\sup_M K^{\frac{n-2}{n}}
\vert\int_\Omega u^{\frac{2n}{n-2}}d\mu_{g_0}\vert^{\frac{n-2}{n}}.
\end{split}
\end{equation}
Thus combining \eqref{L_g_0_estimate} with \eqref{int_K_estimate_on_Omega} via
\begin{equation*}
\begin{split}
\langle L_{g_{0}}u,u\rangle
& +
B\vert \int_{M} Ku^{\frac{2n}{n-2}}d\mu_{g_{0}}\vert^{\frac{n-2}{n}}
\geq
\frac{c_n(1-a)\nu_{1}(D)}{\nu_{1}(D)+1}\int_M|\nabla (\eta u)|^2d\mu_{g_0}
\\
&
+
(
B\inf_{M\setminus \Omega}\vert K \vert^{\frac{n-2}{n}}
-
C_{2}
\frac{\nu_{1}(D)+1}{\nu_{1}(D)}
\frac{|D\setminus\Omega|^{\frac{2}{n}} }{ad^2}
-C_3
)
\Vert u \Vert_{L^{\frac{2n}{n-2}}(M\setminus \Omega )}^{2} \\
& -
B\sup_M K^{\frac{n-2}{n}}\Vert u \Vert_{L^{\frac{2n}{n-2}}(\Omega)}^{2}.
\end{split}
\end{equation*}
Choosing $a=\frac{1}{2}$, denoting by $S$ the Sobolev constant, supposing
\begin{enumerate}[label=(\roman*)]
\item[1)]
$
B^{\frac{n}{n-2}}
>
2^{\frac{n}{n-2}}(2
C_{2}
\frac{\nu_{1}(D)+1}{\nu_{1}(D)}
\frac{|D\setminus\Omega|^{\frac{2}{n}} }{d^2}
+
C_3
)^{\frac{n}{n-2}}
/
\inf_{M\setminus \Omega}\vert K \vert
$
\item[2)] $\sup_M K<C_{1}/B^{\frac{n}{n-2}}$ with
$C_{1}^{\frac{n-2}{n}}S^2<\frac{c_n\nu_{1}(D)}{4(\nu_{1}(D)+1)}$
,
\end{enumerate}
and recalling $\Omega \subset\subset D$,	
we then find from the Sobolev embedding on $H^{1}(M)$
\begin{equation}\label{L_g_0_u_u_lower_bound_against_L_2_start}
\begin{split}
\langle L_{g_{0}}u,u\rangle
& +
B\vert \int_{M} Ku^{\frac{2n}{n-2}}d\mu_{g_{0}}\vert^{\frac{n-2}{n}}
\geq
\frac{c_n\nu_{1}(D)}{4(\nu_{1}(D)+1)}\int_M|\nabla (\eta u)|^2d\mu_{g_0}
\\
&
+
(
C_{2}
\frac{\nu_{1}(D)+1}{\nu_{1}(D)}
\frac{|D\setminus\Omega|^{\frac{2}{n}} }{2d^2}
+
C_3
)
\Vert u \Vert_{L^{\frac{2n}{n-2}}(M\setminus \Omega )}^{2}
\\
\geq \; &
C(\Omega,D)\Vert u \Vert_{L^{\frac{2n}{n-2}}}^{2}
.
\end{split}
\end{equation}
Adding some $\varepsilon \langle L_{g_{0}}u,u \rangle$
with $\varepsilon>0$ sufficiently small to both sides of
\eqref{L_g_0_u_u_lower_bound_against_L_2_start},
estimate \eqref{AB} readily follows under 1) and 2) above,
which both hold true, if
\begin{equation*}
\begin{split}
\sup_M K
< \; &
\frac{\epsilon \inf_{M\setminus \Omega} (-K)}
{
(
\frac{\nu_{1}(D)+1}{\nu_{1}(D)}\frac{|D\setminus\Omega|^{\frac{2}{n}}}{d^{2}}
+
1
)^{\frac{n}{n-2}}
}
\end{split}
\end{equation*}
for some universal $\epsilon>0$.
Recalling $d=dist(\partial \Omega,\partial D)$, the assertion follows.
\end{proof}

\begin{remark}
In view of the non existence results, discussed in Section \ref{Sec_Introduction},
condition (i) in Proposition \ref{prop_A_B_inequality_from_A_B_conditions}
displays the correct behaviour in that $\sup (K,0)\longrightarrow 0$, as
$\inf(K,0)\longrightarrow 0$ or $\nu_{1}(\Omega_{K})\longrightarrow 0$.
Also note, that
$\nu_{1}(\Omega_{K})\longrightarrow \infty$
does not really relax the smallness assumption.
\end{remark}

\begin{lemma}\label{lem_implications_of_A_B_inequality_on_J}
If an A-B-inequality holds on $X$, then
\begin{enumerate}[label=(\roman*)]
\item 	
$\inf_{X}(-k),\inf_{X}J >0$ and $\sup_{X}(-k),\sup_{X}(-r) <\infty$
\item $\inf_{X}(-r)>0$  on any sublevel $\{ J\leq L \} $.
\end{enumerate}
In particular $\Vert \cdot \Vert_{H^{1}}$ is uniformly bounded on $X$.
\end{lemma}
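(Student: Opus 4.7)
The plan is to extract everything directly from the A-B-inequality once it is paired with the normalization $\Vert u\Vert_{L^{2^*}}=1$ and the Sobolev embedding. For $u\in X$, since $r<0$, the hypothesis \eqref{AB} rearranges as
\[
\Vert u\Vert_{H^{1}}^{2}+A(-r)\leq B|k|^{\frac{n-2}{n}},
\]
so all four of the claimed bounds will be obtained by reading this line together with $\Vert u\Vert_{L^{2^*}}=1$ in the appropriate direction.

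First I would establish the upper bounds. The crudest one, $\sup_{X}(-k)<\infty$, is immediate from H\"older: $-k=\int_{M}(-K)u^{2^{*}}d\mu_{g_{0}}\leq \Vert K\Vert_{\infty}$. Substituting this into the rearranged A-B-inequality displayed above, I get both $\sup_{X}(-r)\leq \tfrac{B}{A}\Vert K\Vert_{\infty}^{(n-2)/n}$ and $\sup_{X}\Vert u\Vert_{H^{1}}^{2}\leq B\Vert K\Vert_{\infty}^{(n-2)/n}$, proving uniform $H^{1}$-boundedness as well.

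Next I would obtain the lower bound $\inf_{X}(-k)>0$ by turning the Sobolev embedding around: on $X$ the constraint $\Vert u\Vert_{L^{2^{*}}}=1$ combined with $\Vert u\Vert_{L^{2^{*}}}\leq S\Vert u\Vert_{H^{1}}$ gives $\Vert u\Vert_{H^{1}}^{2}\geq 1/S^{2}$. Dropping the non-positive term $Ar$ in \eqref{AB} then forces
\[
\frac{1}{S^{2}}\leq B|k|^{\frac{n-2}{n}},
\]
whence $\inf_{X}(-k)\geq (1/(BS^{2}))^{n/(n-2)}>0$. The positivity of $\inf_{X}J$ is then automatic from the quotient structure $J=(-k)/(-r)^{n/(n-2)}$, with a numerator bounded below and a denominator bounded above on $X$. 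Finally, on a sublevel $\{J\leq L\}$ the defining inequality rewrites as $-k\leq L(-r)^{n/(n-2)}$; together with the uniform lower bound on $-k$ just obtained, this yields $(-r)^{n/(n-2)}\geq \inf_{X}(-k)/L$, proving (ii).

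There is essentially no obstacle: the whole argument is a bookkeeping exercise against \eqref{AB}. The one point that needs care is that the A-B-inequality is assumed only on $X$, not globally, so every manipulation must stay within $X$; in particular it is important that the sign condition $r<0$ built into $X$ is what allows one to discard the term $Ar$ in the final step, and that $\Vert u\Vert_{L^{2^{*}}}=1$ is the correct normalization that converts Sobolev into a usable lower bound on $\Vert u\Vert_{H^{1}}$.
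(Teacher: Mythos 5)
Your argument is correct and follows essentially the same route as the paper: H\"older for $\sup_X(-k)$, the normalized A-B-inequality combined with Sobolev and the sign $r<0$ for $\inf_X(-k)>0$, the quotient structure for $\inf_X J>0$, and the rearrangement $-k\leq L(-r)^{\frac{n}{n-2}}$ on sublevels for (ii). The only cosmetic difference is that you extract the bounds on $-r$ and $\Vert u\Vert_{H^{1}}$ from the A-B-inequality itself, whereas the paper gets them directly from $-r\leq \int u^{2}\,d\mu_{g_{0}}\leq C_{0}$ and $\Vert u\Vert_{H^{1}}^{2}\leq C_{1}r+C_{2}\Vert u\Vert_{L^{2^{*}}}^{2}$ with $r<0$; both are equally valid.
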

\begin{proof}
Since $\Vert \cdot \Vert_{L^{\frac{2n}{n-2}}}=1$ on $X$, we have
\begin{enumerate}[label=(\roman*)]
\item[1)]
$
-k
\leq
\Vert K \Vert_{L^{\infty}}\int u^{\frac{2n}{n-2}}d\mu_{g_{0}}
=
\Vert K \Vert_{L^{\infty}}
$
\item[2)]
$
0<
-r
=
-\int (c_{n}\vert \nabla u\vert^{2}-u^{2})d\mu_{g_{0}}
\leq
\int u^{2}d\mu_{g_{0}}
\leq
C_{0}
\Vert u \Vert^{2}_{L^{\frac{2n}{n-2}}}=
C_{0}
$ 	
\end{enumerate}
and may from the Sobolev embedding assume, that
$
1\leq Ar+B\vert k \vert^{\frac{n-2}{n}}.
$
Then $k\longrightarrow 0 \Longrightarrow  2r>1/A$,
while $r<0$ on $X$ by definition. Thus
$$-k\geq c_{0}>0 \; \text{ on  } \; X$$
and therefore
$
J(u)=\frac{-k}{(-r)^{\frac{n}{n-2}}} \geq \frac{c_{0}}{C_{0}^{\frac{n}{n-2}}}
$.
We conclude
$$
L\geq J(u)=\frac{-k}{(-r)^{\frac{n}{n-2}}}
\Longrightarrow
(-r)^{\frac{n}{n-2}}\geq \frac{-k}{L}
\geq
\frac{c_{0}}{L}.
$$
Finally
$\Vert u \Vert_{H^{1}}^{2}\leq C_{1}r+C_{2}\Vert u \Vert_{L^{\frac{2n}{n-2}}}^{2}$ by H\"older's inequality.
\end{proof}

\subsection{Solvability by Minimization}\label{section_solvability_by_minimization}
We show Theorem \ref{thm_minimize_J} by direct minimization, using
the Yamabe type flow \eqref{flow_for_J} in order to pass from a minimizing sequence to a  minimizing sequence of solutions.
\begin{proof}[Proof of Theorem \ref{thm_minimize_J}]
Choose a minimizing sequence of initial data $(u_0^i)\subset X$,
$J(u_0^i)\longrightarrow  \inf_X J$,
and for $i\in \N$ fixed the flow line generated by
\eqref{flow_for_J}
$$u^{i}=u^{i}(t,\cdot), \; u(0,\cdot)=u_0^i.$$
Combining Lemmata \ref{lem_implications_of_A_B_inequality_on_J}, \ref{lem_long_time_existence}
and Proposition \ref{prop_flow_properties} we find
\begin{equation*}
\; \exists \; 0<t_{j}^i\longrightarrow \infty
\; : \;
\vert \partial J \vert(u_{t_{j}}^i)\longrightarrow 0,\;
u_{t_{j}}^i=u(t_{j}^i,\cdot).
\end{equation*}
Moreover, passing to a subsequence, we may assume, that
$$
r_{u_{t_{j}}^i}\longrightarrow r_{\infty}^i<0
\; \text{ and } \;
k_{u_{t_{j}}^i}\longrightarrow k_{\infty}^i<0,
$$
and, since
$\sup_{u\in X}\Vert u \Vert_{H^{1}}<C$ on $X$,
as $r<0$ and $\Vert \cdot \Vert_{L^{\frac{2n}{n-2}}}=1$ thereon,
that
\begin{equation}\label{weak_limits_of_flow_lines}
u^i_{t_{j}}\xrightharpoondown{\quad} u_\infty^i \;\text{ weakly in }\;  H^{1}(M)
\; \text{ and } \;
\sup_{i,j}\Vert u^{i}_{t_{j}} \Vert_{H^{1}}<C.
\end{equation}
In particular $u_\infty^i>0$ from (iii) in Proposition \ref{prop_flow_properties} and is a weak solution of
$$
\frac{-k_{\infty}^i}{-r_{\infty}^i} L_{g_0}u_\infty^i=K(u_\infty^i)^{\frac{n+2}{n-2}},
$$
which implies
\begin{equation}\label{r_infty_k_infty_indentity}
\frac{r_{u_{\infty}^{i}}}{k_{u_{\infty}^{i}}}
=
\frac{r_{\infty}^i}{k_{\infty}^i}.
\end{equation}
As a consequence of \eqref{r_infty_k_infty_indentity}
and the weak lower semicontinuity of the norm $\Vert \cdot \Vert_{H^{1}}$.
\begin{equation}\label{energy_of_weak_limit}
\begin{split}
J(u_{0}^{i})
\geq
\; &\lim_{j \to \infty} J(u_{t_{j}}^{i})
=
\lim_{j\to \infty}
\frac{-k_{u_{t_{j}}^{i}}}{(-r_{u_{t_{j}}^{i}})^{\frac{n}{n-2}}}
=
\lim_{j \to \infty}
(\frac{-k_{u_{t_{j}}^{i}}}{-r_{u_{t_{j}}^{i}}} (-r_{u_{t_{j}}^{i}})^{-\frac{2}{n-2}}) \\
\geq \; &
\frac{-k_{\infty}^i}{-r_{\infty}^i}
\liminf_{j\to \infty} (-r_{u_{t_{j}}^{i}})^{-\frac{2}{n-2}}
\geq
\frac{-k_{\infty}^i}{-r_{\infty}^i}
(-r_{u_{\infty}^{i}})^{-\frac{2}{n-2}}
=
\frac{-k_{u_{\infty}^{i}}}{(-r_{u_{\infty}^{i}})^{\frac{n}{n-2}}} \\
= \; &
J(u_{\infty}^{i}),
\end{split}
\end{equation}
and we deduce $J(u_{\infty}^{i})\longrightarrow \inf_{X}J$, i.e. we have found a minimizing sequence of
solutions.
Finally, passing again to a subsequence, if necessary,
from \eqref{weak_limits_of_flow_lines}
there exist $0\leq u_\infty^\infty\in H^1(M)$
as a weak limit of $u_\infty^i>0$ satisfying
$$\frac{-k_{\infty}}{-r_{\infty}} L_{g_0}u_\infty^\infty=K(u_\infty^\infty)^{\frac{n+2}{n-2}}.$$
Then by standard regularity $0\leq u_{\infty}^{\infty}\in C^{\infty}(M)$
and, writing $\partial J(u_{\infty}^{\infty})=0$
as
$$
-
c_{n}\Delta_{g_{0}}u_{\infty}^{\infty}
-
(1+\frac{-r_{\infty}}{-k_{\infty}}
K(u_{\infty}^{\infty})^{\frac{4}{n-2}})u_{\infty}
=
0,
$$
then $u_{\infty}^{\infty}>0$ follows from the weak Harnack inequality.
Finally, repeating the argument of \eqref{energy_of_weak_limit}, we find
$J(u_{\infty}^{\infty})=\inf_{X}J$ and the proof is complete.
\end{proof}

\section{Non Uniqueness}
In this section we study the occurrence of critical points at infinity, cf. \cite{MM3},
associated to variational formulations of $R=K$ and rule them out, as needed,
cf. Remark \ref{rem_critcal_points_at_infinity}.
We shall throughout this section assume
the validity of some global A-B-inequality  \eqref{AB},
which ensures, that
$$\inf_{X} J=\min_{X}J>0,$$
i.e. a lower, positive bound and
the existence of a minimizer $u_{0}\in X$ of $J$, see Theorem \ref{thm_minimize_J}.
We also assume invertibility of $L_{g_{0}}$, a generic property
\cite{GHJL} implying the existence of a Green's function $G_{g_{0}}$,
see Theorems 2.2 and 3.7 in \cite{Shubin_Spectral_Theory},
which due to non positivity of $L_{g_{0}}$ is necessarily sign changing.
Recall, that on $X$ we reduce $J$ by flowing along
$$
\partial_{t}u
=
-(\frac{k}{r}R-K)u
=
\frac{k}{r}u^{-\frac{4}{n-2}}(c_{n}\Delta_{g_{0}} u
+
u
+
\frac{r}{k}Ku^{\frac{n+2}{n-2}})
$$
and that along every flow line we have
$$
c<-k,-r<C,
$$
see Lemma \ref{lem_implications_of_A_B_inequality_on_J}, while
as a consequence of the parabolic maximum principle every flow line
starting strictly positive remains strictly positive,
and thus a zero weak limit can never occur along \eqref{flow_for_J},
see Proposition \ref{prop_flow_properties},
\textit{in contrast to what happens in case of positive Yamabe invariants}.

On the other hand, if $K$ is sign changing, then
\begin{equation*}%\label{def_Y}
\emptyset\neq Y
=
\{ r>0 \} \cap \{ k>0 \}
\cap
\{ \Vert u \Vert_{L^{\frac{2n}{n-2}}}=1 \}\cap \{ u>0 \}
\subset
C^{\infty}(M),
\end{equation*}
as is easily seen from taking
some $0\leq u_{0}\in C^{\infty}$ with
$$supp(u_{0})\subset B_{\varepsilon}(a_{0})\subset \{ K>0 \} .$$
In fact $k_{u_{0}}>0$ and also $r_{u_{0}}>0$ for $0<\varepsilon \ll 1$
sufficiently small,
as the first Dirichlet eigenvalue of $L_{g_{0}}$ on $B_{\varepsilon}(a_{0})$
is positive,
whence $u=\alpha (u_{0}+\delta) \in Y$ for suitable $\alpha >0$ and $0<\delta \ll 1$.
Moreover we may define analogously to $J$ on $X$
the scaling invariant functional
\begin{equation*}%\label{def_I}
I=\frac{r}{k^{\frac{n-2}{n}}} \; \text{ on } \; Y
\end{equation*}
with naturally associated Yamabe type flow
\begin{equation}\label{flow_for_I}
\partial_{t}u=-(R-\frac{r}{k}K)u,
\end{equation}
for which the same arguments as for \eqref{flow_for_J} in Section \ref{sec_preliminaries} apply,
and so the flow generated by \eqref{flow_for_I} exists for all times,
preserves $Y$ and is a pseudo gradient flow for $I$,
provided $c<k,r<C$ remain uniformly bounded away from zero and infinity
along each flow line,
which by the following lemma holds true.
\begin{lemma}\label{lem_implications_of_A_B_inequality_on_I}
If an A-B-inequality holds on $H^1(M)$, then
\begin{enumerate}[label=(\roman*)]
\item 	$\sup_{Y}k< \infty$ and $\inf_{Y}r,\inf_{Y}I>0 $
\item
$\sup_{Y}r <\infty \; \text{ and }  \inf_{Y}k >0$
on any sublevel $\{ I\leq L \}$.
\end{enumerate}
In particular $\Vert \cdot \Vert_{H^{1}}$ is uniformly bounded on any sublevel.
\end{lemma}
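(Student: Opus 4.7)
The plan is to mimic the proof of Lemma~\ref{lem_implications_of_A_B_inequality_on_J}, interchanging the roles of $(X,J)$ and $(Y,I)$. Since $\Vert u \Vert_{L^{\frac{2n}{n-2}}}=1$ on $Y$, H\"older's inequality immediately gives $k=\int K u^{\frac{2n}{n-2}}d\mu_{g_{0}}\leq \Vert K \Vert_{L^{\infty}}$, hence $\sup_{Y}k<\infty$. Combining the A-B-inequality with the Sobolev embedding $H^{1}\hookrightarrow L^{\frac{2n}{n-2}}$ and absorbing the Sobolev constant into $A$ and $B$, one may assume the normalised inequality $1\leq A\,r+B\,k^{\frac{n-2}{n}}$ for all $u\in Y$, which is the workhorse for what follows.

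For (ii), on a sublevel $\{I\leq L\}$ the identity $r=I\,k^{\frac{n-2}{n}}\leq L\Vert K\Vert_{L^{\infty}}^{\frac{n-2}{n}}$ gives $\sup_{\{I\leq L\}}r<\infty$, and substituting back into the normalised A-B-inequality yields $1\leq (AL+B)\,k^{\frac{n-2}{n}}$, whence $\inf_{\{I\leq L\}}k\geq (AL+B)^{-n/(n-2)}>0$. The uniform $H^{1}$-bound on sublevels then follows directly from the A-B-inequality itself, whose right-hand side is bounded on $\{I\leq L\}$.

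The delicate part is (i), namely $\inf_{Y}r>0$, from which $\inf_{Y}I\geq (\inf_{Y}r)/\Vert K\Vert_{L^{\infty}}^{\frac{n-2}{n}}>0$ follows at once. The algebraic trick of Lemma~\ref{lem_implications_of_A_B_inequality_on_J}, which exploited $r<0$ on $X$ to force $\vert k\vert$ to absorb the unit on the right-hand side, is unavailable on $Y$ since both $r,k>0$. My plan is to proceed by contradiction: if $u_{n}\in Y$ with $r_{n}\to 0$, then the A-B-inequality bounds $u_{n}$ in $H^{1}$, so up to a subsequence $u_{n}\rightharpoonup u_{\infty}$ weakly in $H^{1}$ and strongly in $L^{2}$. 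Weak lower semicontinuity of $\int \vert \nabla \cdot\vert^{2}$ forces $r(u_{\infty})\leq 0$, while Sobolev together with $\Vert u_{n}\Vert_{L^{\frac{2n}{n-2}}}=1$ rules out $u_{\infty}\equiv 0$. Applying the A-B-inequality to $u_{\infty}$ yields $\Vert u_{\infty}\Vert_{H^{1}}^{2}\leq B\vert k(u_{\infty})\vert^{\frac{n-2}{n}}$, so in particular $k(u_{\infty})\neq 0$; a Brezis--Lieb/concentration-compactness split of $k_{n}$ as $k(u_{\infty})$ plus a concentration contribution, combined with $k_{n}>0$ and the invertibility of $L_{g_{0}}$ (which excludes nontrivial nonnegative $u_{\infty}$ with $r(u_{\infty})=0$), should yield the contradiction. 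The main obstacle is precisely this concentration-compactness step: ruling out $r\to 0$ on $Y$ is not purely algebraic and depends crucially on the geometric constraint $k>0$ on $Y$ and on the invertibility of $L_{g_{0}}$, not on the A-B-inequality alone.
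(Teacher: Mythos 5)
Your treatment of the routine parts is fine and matches the paper: $\sup_Y k\leq \Vert K\Vert_{L^{\infty}}$ from the normalization, the bounds in (ii) by pure algebra from $I\leq L$ together with the A-B-inequality, and the $H^{1}$-bound on sublevels. The gap is exactly where you flag it: the claim $\inf_Y r>0$ is not proved, and the route you sketch does not close. In particular, the parenthetical step ``invertibility of $L_{g_{0}}$ excludes nontrivial nonnegative $u_{\infty}$ with $r(u_{\infty})=0$'' is false: $r(u)=\int L_{g_{0}}u\,u\,d\mu_{g_{0}}=0$ only says that $u$ lies on the null cone of an indefinite quadratic form, not that $u\in\ker L_{g_{0}}$; e.g. interpolating between a bump with $r>0$ and the constant (which has $r<0$) produces nonnegative functions with $r=0$ regardless of invertibility. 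So the intended contradiction after the Brezis--Lieb splitting is not there, and your concluding assertion that the statement ``is not purely algebraic and depends crucially on the invertibility of $L_{g_{0}}$'' is also not what the paper does — invertibility plays no role in this lemma.

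The paper's proof of $\inf_Y r>0$ is elementary and uses precisely the two ingredients you set aside: the A-B-inequality is assumed on all of $H^{1}(M)$ (not merely on $Y$), and the standing necessary condition $\int_M K\,d\mu_{g_{0}}<0$. Suppose $\inf_Y r=0$ and pick $u_{0}\in Y$ with $r_{u_{0}}\leq\epsilon$. Consider the normalized segment
$$
u_{\tau}
=
\frac{(1-\tau)u_{0}+\tau\mathbb{1}}{\Vert (1-\tau)u_{0}+\tau\mathbb{1}\Vert_{L^{\frac{2n}{n-2}}}},
\qquad \tau\in[0,1].
$$
Since $L_{g_{0}}\mathbb{1}=-\mathbb{1}$ and $u_{0}>0$, both the cross term $2\tau(1-\tau)\int L_{g_{0}}u_{0}\,\mathbb{1}=-2\tau(1-\tau)\int u_{0}$ and $\tau^{2}r_{\mathbb{1}}$ are negative, while the $L^{\frac{2n}{n-2}}$-norm of the convex combination of positive functions stays bounded away from zero, so $r_{u_{\tau}}\leq C\epsilon$ along the whole segment. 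On the other hand $k_{u_{0}}>0$ and $k_{\mathbb{1}}=\int_M K\,d\mu_{g_{0}}<0$, so by the intermediate value theorem there is a $\tau$ where $0<k_{u_{\tau}}\leq\epsilon$ while still $r_{u_{\tau}}\leq C\epsilon$; this violates the normalized inequality $1\leq Ar+B\vert k\vert^{\frac{n-2}{n}}$ for $\epsilon$ small. The crucial point your sketch misses is that the functions $u_{\tau}$ leave $Y$ (their $k$ changes sign), which is exactly why the hypothesis that the A-B-inequality holds on $H^{1}(M)$, rather than on $Y$ alone, is what makes the conclusion accessible without any compactness or spectral argument.
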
	
\begin{proof}
By $\Vert \cdot \Vert_{L^{\frac{2n}{n-2}}}=1$ on $Y$, clearly
$k\leq \max K$, and we may assume
\begin{equation}\label{A_B_inequality_for_proof}
1\leq Ar+B\vert k \vert^{\frac{n-2}{n}}
\; \text{ on } \;
H^{1}\cap \{ \Vert \cdot \Vert_{L^{\frac{2n}{n-2}}} = 1\}.
\end{equation}
Moreover suppose, that $\inf_{Y}r=0$. Then there exists
$(u_{m})\subset Y$ with $r_{u_{m}}\longrightarrow  0$
and hence for any $\epsilon>0$
some $u_{0}\in Y$ with $r_{u_{0}}\leq \epsilon$.
Then consider for $\tau \in [0,1]$
$$
u_{\tau}
=
\frac{(1-\tau)u_{0}+\tau\mathbb{1}}{\Vert (1-\tau)u_{0}+\tau\mathbb{1} \Vert_{L^{\frac{2n}{n-2}}}}
\in H^{1}\cap \{ \Vert \cdot \Vert_{L^{\frac{2n}{n-2}}} = 1\},
$$
for which we readily have
$r_{u_{\tau}}\leq \epsilon$,
while from
$k_{0}=k_{u_{0}}>0$ and $k_{1}=k_{\mathbb{1}}<0$ there exists some
$0<t_{0}<1$ with $k_{t_{0}}=0$.
Then for some $0<t_{1}<t_{0}$ we have
$r_{u_{t_{1}}}\leq \epsilon$
and
$0<k_{u_{t_{1}}}\leq \epsilon$,
contradicting \eqref{A_B_inequality_for_proof}
for $0< \epsilon\ll 1$. Thus
$$
\sup_{Y}k<\infty,\inf_{Y}r>0
\; \text{ and in particular } \;
I=\frac{r}{k^{\frac{n-2}{n}}}\geq C,
$$
whence (i) is shown. To see (ii),
for $u\in \{ I\leq L \} $ from (i) we have
\begin{enumerate}[label=(\roman*)]
\item
$
L\geq I=\frac{r}{k^{\frac{n-2}{n}}}
\Longrightarrow
r\leq Lk^{\frac{n-2}{n}}\leq CL
$	
\item
$
L\geq I=\frac{r}{k^{\frac{n-2}{n}}}
\Longrightarrow
k^{\frac{n-2}{n}}\geq \sfrac{r}{L} \geq \sfrac{C}{L}
$.
\end{enumerate}
Finally
$\Vert u \Vert^{2}_{H^{1}}\leq C_{1}r+C_{2}\Vert u \Vert_{L^{\frac{2n}{n-2}}}^{2}$
and the last assertion follows.
\end{proof}

For the same reasons as for $J$
a flow line $u$ for $I$ along \eqref{flow_for_I}
with a strictly positive initial data $u_{0}$
will remain strictly positive and
hence also for $I$ a zero weak limit will never occur.
Be it as it may, clearly $I$ is a valid variational energy just like $J$,
but in contrast to $J$ we cannot
pass to weak limits $u_{t_{k}} \xrightharpoondown{\quad} u_{\infty}$ within $Y$,
since in that passage $r=\int L_{g_{0}}uud\mu_{g_{0}}>0$
as a just \textit{lower} semicontinuous function
may become non positive, i.e. $r_{u_{\infty}}\leq 0$.
On the other hand, if we can exclude blow-up for $I$,
then we will find a minimizer for $I$,
yielding a second solution to the problem besides the minimizer for $J$.
This will show Theorem \ref{thm_second_solution}.

\subsection{Bubbles and Estimates}\label{section_bubbles}
Let us recall the construction of {\em conformal normal coordinates} from \cite{lp}.
Given $a \in M$, one chooses a special conformal metric
\begin{equation*}%\label{conformal_normal_metric}
\begin{split}
g_{a}=u_{a}^{\frac{4}{n-2}}g_{0}
\; \text{ with } \;
u_{a}=1+O(d^{2}_{g_{0}}(a,\cdot)),
\end{split}
\end{equation*}
whose volume element in $ g_{a}$-geodesic normal coordinates coincides with the Euclidean one, see also \cite{gunther}. In particular
\begin{equation*}
\begin{split}
(exp_{a}^{g_{0}})^{-}\circ \exp_{a}^{g_{a}}(x)
=
x+O(\vert x \vert^{3})
\end{split}
\end{equation*}
for the exponential maps centered at $ a $.
We then denote by $r_a$ the geodesic distance from $a$ with respect to the metric $g_a$
just introduced.
With these choices  the expression of the
Green's function $G_{g_{ a }}$  for the conformal
Laplacian $L_{g_{a}}$ with pole at $a \in M$, denoted by
$G_{a}=G_{g_{a}}(a,\cdot)$, simplifies to
\begin{equation}\label{Expansion_Green_Function}
\begin{split}
G_{ a }=\frac{1}{4n(n-1)\omega _{n}}(r^{2-n}_{a}+H_{ a }), \; r_{a}=d_{g_{a}}(a, \cdot)
, \;
H_{ a }=H_{r,a }+H_{s, a },
\end{split}
\end{equation}
where $\omega_{n} = |S^{n-1}|$, cf. Section 6 in \cite{lp}.
Here $H_{r,a }\in C^{2, \alpha}$,
while
\begin{equation}\label{Irregular_Part}
\begin{split}
H_{s,a}
=
O
\begin{pmatrix}
0 & \text{ for }\, n=3
\\
r_{a}^{2}\ln r_{a} & \text{ for }\, n=4
\\
r_{a}& \text{ for }\, n=5
\\
\ln r_{a} & \text{ for }\, n=6
\\
r_{a}^{6-n} & \text{ for }\, n\geq 7
\end{pmatrix}
\end{split}
\end{equation}
and
$H_{s,a}\equiv 0$, if $g_{a}$ is flat around $a$.
In fact the argument in \cite{lp} of a successive polynomial killing of polynomial deficits,
created by the local geometry, is purely local and thus applies here as well.
Proceeding hence as in \cite{lp} we reach
$$L_{g_{a}}\tilde{G}_{a}=\delta_{a}+ \tilde{f}_{a}
\; \text{ with } \;
\tilde{G}_{a}
\; \text{ as in \eqref{Expansion_Green_Function} and } \;
\tilde{f}_{a}\in C^{0,\alpha}$$
and,
solving $L_{g_{a}}\tilde F_{a}=-\tilde f_{a}$ by assumed invertibility of $L_{g_{a}}$,
we find $\tilde F_{a}\in C^{2,\alpha}$ from Schauder estimates and then $G_{a}=\tilde{G}_{a}+\tilde{F}_{a}$.
On $\{ G_{a}>0 \} $ let
for $\lambda>0$
\begin{equation*}%\label{theta_a_lambda}
\theta_{a,\lambda}=u_{ a }(\frac{\lambda}{1+\lambda^{2}\gamma_{n} G_{a}^{\frac{2}{2-n}}})^{\frac{n-2}{2}}
,\;
\gamma_{n}=(4n(n-1)\omega _{n})^{\frac{2}{2-n}},
\end{equation*}
see \cite{MM1} or \cite{Mayer_Scalar_Curvature_Flow}. Extend $\theta_{a,\lambda}=0$ on $\{ G_{a}\leq 0 \} $  and
with a smooth cut-off function
$$
\eta_{a}=\eta(d_{g_{a}}(a,\cdot))
=
\left\{
\begin{matrix*}
1  &  \;\text{on}\;  &  B_{\epsilon}(a) &=&B^{d_{g_{a}}}_{\epsilon}(a) \\
0 &  \;\text{on}\;  &   B_{2\epsilon}(a)^{c}&=&M\setminus  B^{d_{g_{a}}}_{2\epsilon}(a) \}, \\
\end{matrix*}
\right.
$$
where $0<\epsilon\ll 1$ is independent of $a\in M$ and such, that on
$B^{d_{g_{a}}}_{4\epsilon}(a)$ the conformal normal coordinates
from $g_{a}$ are well defined and $G_{g_{a}}>0$, define
\begin{equation}\label{Bubble_Definition}
\begin{split}
\varphi_{a, \lambda }
= &
\eta_{a}\theta_{a,\lambda}.
\end{split}
\end{equation}
Note,  that
$
\gamma_{n}G^{\frac{2}{2-n}}_{ a }(x) =
d_{g_a}^{2}(a,x) + o(d_{g_a}^{2}(a,x))$,
as $x \longrightarrow a$.
\begin{lemma}\label{lem_L_g_0_of_bubble}
If $Cond_{n}$ holds at $a\in M$, then
\begin{equation*}
L_{g_{0}}\varphi_{a, \lambda}
=
4n(n-1)\varphi_{a, \lambda}^{\frac{n+2}{n-2}}
+
O(\frac{\chi_{B_{2\epsilon}(a)\setminus B_{\epsilon}(a)}}{\lambda^{\frac{n-2}{2}}})
+
o_{\frac{1}{\lambda}}(\frac{1}{\lambda^{\frac{n-2}{2}}}) \; \text{ in } \; W^{-1,2}(M).
\end{equation*}
The expansion above persists upon taking the $\lambda \partial_\lambda$ and $\frac{\nabla_{a}}{\lambda}$ derivatives.
\end{lemma}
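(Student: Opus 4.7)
The plan is to reduce the computation to the auxiliary metric $g_a$ via conformal covariance, exploit the Green's function representation of $\tilde\theta_{a,\lambda}:=\theta_{a,\lambda}/u_a$, and Taylor-expand using the Lee--Parker structure in \eqref{Expansion_Green_Function}--\eqref{Irregular_Part}. Writing $\varphi_{a,\lambda}=u_a(\eta_a\tilde\theta_{a,\lambda})$, the covariance identity $L_{g_0}(u_av)=u_a^{(n+2)/(n-2)}L_{g_a}v$ reduces matters to computing $L_{g_a}(\eta_a\tilde\theta_{a,\lambda})$, since $u_a$ is bounded above and bounded below on the support of $\eta_a$. Expanding the Laplacian on the product,
\begin{equation*}
L_{g_a}(\eta_a\tilde\theta_{a,\lambda})=\eta_aL_{g_a}\tilde\theta_{a,\lambda}-c_n(\Delta_{g_a}\eta_a)\tilde\theta_{a,\lambda}-2c_n\langle\nabla_{g_a}\eta_a,\nabla_{g_a}\tilde\theta_{a,\lambda}\rangle_{g_a},
\end{equation*}
the last two summands are supported on $B_{2\epsilon}(a)\setminus B_{\epsilon}(a)$, and on this annulus both $\tilde\theta_{a,\lambda}$ and $|\nabla\tilde\theta_{a,\lambda}|$ are pointwise $O(\lambda^{-(n-2)/2})$, which accounts for the $O(\chi_{B_{2\epsilon}(a)\setminus B_\epsilon(a)}/\lambda^{(n-2)/2})$ contribution in $W^{-1,2}(M)$.

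Inside $B_{\epsilon}(a)$ the cutoff is identically one and I would use $L_{g_a}G_a=0$ off the diagonal. Writing $\tilde\theta_{a,\lambda}=F(G_a)$ with $F(s)=\lambda^{(n-2)/2}(1+\lambda^2\gamma_n s^{2/(2-n)})^{-(n-2)/2}$, a direct calculation yields
\begin{equation*}
L_{g_a}\tilde\theta_{a,\lambda}=-c_nF''(G_a)|\nabla_{g_a}G_a|_{g_a}^2+R_{g_a}\bigl(F(G_a)-G_aF'(G_a)\bigr).
\end{equation*}
If $G_a$ were the Euclidean fundamental solution in a flat chart, the first summand would collapse exactly to $4n(n-1)\tilde\theta_{a,\lambda}^{(n+2)/(n-2)}$ by the Aubin--Talenti identity for $U_\lambda(y)=(\lambda/(1+\lambda^2|y|^2))^{(n-2)/2}$, while the second would vanish since $R_{g_a}=0$. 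In general the error collects three pieces: the deviation $\gamma_nG_a^{2/(2-n)}-r_a^2=-\tfrac{2}{n-2}r_a^nH_a+O(r_a^{2n-2}H_a^2)$ controlled by \eqref{Expansion_Green_Function}--\eqref{Irregular_Part}, the deviation of $|\nabla_{g_a}r_a|_{g_a}^2$ and $\Delta_{g_a}r_a^2$ from their flat values, controlled in conformal normal coordinates by $g_a-\delta=O(r_a^2)$ together with $R_{g_a}(a)=0$, and the lower-order factor $R_{g_a}(F-G_aF')$.

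The core step is to show that when tested against arbitrary $v\in H^1(M)$ each of these remainders yields $o(\lambda^{-(n-2)/2})$. This is precisely where the dichotomy $Cond_n$ enters: for $3\le n\le 5$ the radial profile of $U_\lambda$ and the boundedness of $H_{s,a}$ in these dimensions (see \eqref{Irregular_Part}) force the $o$-estimate by direct integration regardless of the Weyl geometry; for $n\ge 6$, local conformal flatness around $a$ makes $H_{s,a}\equiv 0$ and, in conformal normal gauge, kills the metric perturbations to high polynomial order near $a$, once more yielding the $o$-estimate. Concerning the last assertion, $\lambda\partial_\lambda$ acts only on $F$ and rescales every estimate by an $O(1)$ factor, while $\nabla_a/\lambda$ differentiates $G_a$ (smooth in $a$ off the diagonal, with scale-invariant bounds) and $\eta_a$ (producing further cutoff contributions on the same annulus); repeating the expansion for the differentiated bubble preserves both the localization of the cutoff remainder and the $o(\lambda^{-(n-2)/2})$ rate of the bulk error.

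The hard part will be the translation of the pointwise expansions of $G_a$ and of $g_a-\delta$ into $W^{-1,2}$-bounds with the sharp power $\lambda^{-(n-2)/2}$ and a genuine $o$-improvement under $Cond_n$, in particular the correct bookkeeping of $H_{r,a}$ against $H_{s,a}$ and of the vanishing of derivatives of the metric at $a$. Once this is done the cutoff and derivative parts follow routinely from the construction.
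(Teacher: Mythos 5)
Your proposal is correct and follows essentially the same route as the paper: the paper simply cites the pointwise expansion of $L_{g_0}\theta_{a,\lambda}$ (Lemma 3.3 of \cite{Mayer_Scalar_Curvature_Flow}), which is exactly what you re-derive via conformal covariance, $\tilde\theta_{a,\lambda}=F(G_a)$ and $L_{g_a}G_a=0$, and then, like the paper, you isolate the annulus cutoff terms as the $O(\chi_{B_{2\epsilon}(a)\setminus B_{\epsilon}(a)}\lambda^{-\frac{n-2}{2}})$ contribution and use $Cond_n$ (boundedness of $H_a$ and $R_{g_a}=O(r_a^2)$ for $3\le n\le 5$; $H_{s,a}\equiv R_{g_a}\equiv 0$ near $a$ for $n\ge 6$) together with the same H\"older/integral estimates to make the interior error $o_{\frac1\lambda}(\lambda^{-\frac{n-2}{2}})$ in $W^{-1,2}$. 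The remaining bookkeeping you defer (including the harmless $(\eta_a-\eta_a^{\frac{n+2}{n-2}})\theta_{a,\lambda}^{\frac{n+2}{n-2}}$ term and the differentiated expansions) is precisely what the paper also treats as routine.
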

\begin{proof}
As in Lemma 3.3 in \cite{Mayer_Scalar_Curvature_Flow} and denoting  $r_{a}=d_{g_{a}}(a,\cdot)$ we find
\begin{equation*}
\begin{split}
L_{g_{0}}\theta_{a, \lambda}
= &
4n(n-1)\theta_{a, \lambda}^{\frac{n+2}{n-2}}
+
\frac{u_{a}^{\frac{2}{n-2}}R_{g_{a}}}{\lambda}\theta_{a, \lambda}^{\frac{n}{n-2}}
\\
& -
2nc_{n}
(1+o_{r_{a}}(1))r_{a}^{n-2}((n-1)H_{a}+r_{a}\partial_{r_{a}}H_{a}) \theta_{a, \lambda}^{\frac{n+2}{n-2}}
\end{split}
\end{equation*}
pointwise and thus
\begin{equation*}%\label{L_g_0_varphi_a_lambda_pointwise}
\begin{split}
& L_{g_{0}}  \varphi_{a,\lambda}
= \;
4n(n-1)\varphi_{a,\lambda}^{\frac{n+2}{n-2}}\\
&
-c_{n}\Delta_{g_{0}}\eta_{a}\theta_{a,\lambda}-2c_{n}\langle\nabla \eta_{a},\nabla \theta_{a,\lambda}\rangle_{g_{0}}
+4n(n-1)(\eta_{a}-\eta_{a}^{\frac{n+2}{n-2}})\theta_{a,\lambda}^{\frac{n+2}{n-2}}
\\
&
-
2nc_{n}
(1+o_{r_{a}}(1))r_{a}^{n-2}((n-1)H_{a}+r_{a}\partial_{r_{a}}H_{a}) \eta_a \theta_{a, \lambda}^{\frac{n+2}{n-2}} +
\frac{u_{a}^{\frac{2}{n-2}}R_{g_{a}}}{\lambda}\eta_a\theta_{a, \lambda}^{\frac{n}{n-2}}.
\end{split}
\end{equation*}
Readily the terms of the second line above can be subsumed under some
$$O(\lambda^{\frac{2-n}{2}})\chi_{B_{2\epsilon}(a)\setminus B_{\epsilon}(a)},$$
while those of the third are of order $o(\lambda^{\frac{2-n}{2}})$ in $W^{-1,2}(M)$,
as follows from
\begin{enumerate}[label=(\roman*)]
\item 	$R_{g_{a}}=O(r_{a}^{2})$ and \eqref{Irregular_Part} in case $3\leq n \leq 5$
\item   $H_{s,a}=R_{g_{a}}= 0$ close to $a$ in case of local flatness around $a$ for $n\geq 6$
\end{enumerate}
and simple integral estimates.
\end{proof}
Since we target $\lambda^{\frac{2-n}{2}}$ as the level of precision and
from Lemma \ref{lem_L_g_0_of_bubble}
$$
\int \vert L_{g_{0}} \varphi_{a,\lambda}-4n(n-1)\varphi_{a,\lambda}^{\frac{n+2}{n-2}}\vert \theta_{a,\lambda}d\mu_{g_{0}}		
=
o_{\frac{1}{\lambda}}(\frac{1}{\lambda^{\frac{n-2}{2}}}),
$$

the error terms in Lemma \ref{lem_L_g_0_of_bubble} will be of no concern.

\
 
\noindent {\bf Notation.}
For  points $a_i \in M$  we will denote  by $K_{i},\nabla K_i$ and $\Delta K_i$ for instance
$$
K(a_{i}),\;
\nabla K(a_{i}) = \nabla_{g_0} K(a_{i})
\; \text{ and } \;
\Delta K(a_{i}) = \Delta_{g_0} K(a_{i}).$$
\noindent
For $k,l=1,2,3$ and $ \lambda_{i} >0, \, a _{i}\in M, \,i= 1, \ldots,q$ let
\begin{enumerate}[label=(\roman*)]
 \item
$
\varphi_{i}
=
\varphi_{a_{i}, \lambda_{i}}$ and $(d_{1,i},d_{2,i},d_{3,i})
=
(1,-\lambda_{i}\partial_{\lambda_{i}}, \frac{1}{\lambda_{i}}\nabla_{a_{i}});
$
 \item
$\phi_{1,i}=\varphi_{i}, \;\phi_{2,i}
=
-\lambda_{i} \partial_{\lambda_{i}}\varphi_{i},
\;\phi_{3,i}= \frac{1}{\lambda_{i}} \nabla_{ a _{i}}\varphi_{i}$, so
$
\phi_{k,i}=d_{k,i}\varphi_{i}.
$
\end{enumerate}
Note, that with the above definitions $\phi_{k,i}$ is uniformly bounded in $H^{1}(M)$.
\begin{lemma}\label{lem_interactions}
Let $k,l=1,2,3$ and $i,j = 1, \ldots,q$. Then for
\begin{equation*}
\varepsilon_{i,j}
=
\eta(d_{g_{0}}(a_{i},a_{j}))
(
\frac{\lambda_{j}}{\lambda_{i}}
+
\frac{\lambda_{i}}{\lambda_{j}}
+
\lambda_{i}\lambda_{j}\gamma_{n}G_{g_{0}}^{\frac{2}{2-n}}(a _{i},a _{j})
)^{\frac{2-n}{2}}
\end{equation*}
with a suitable cut-off function
$$
\eta
=
\left\{
\begin{matrix*}[l]
1  &  \;\text{on}\;  &  r<4\epsilon \\
0 &  \;\text{on}\;  &  r\geq 6 \epsilon \\
\end{matrix*}
\right.
$$
and $\epsilon>0$ sufficiently small there holds
\begin{enumerate}[label=(\roman*)]
\item
$
\vert \phi_{k,i}\vert,
\vert \lambda_{i}\partial_{\lambda_{i}}\phi_{k,i}\vert,
\vert \frac{1}{\lambda_{i}}\nabla_{a_{i}} \phi_{k,i}\vert
\leq
C \varphi_{i} + o(\frac{1}{\lambda_{i}^{\frac{n-2}{2}}})$
\item
$
\int \varphi_{i}^{\frac{4}{n-2}}
\phi_{k,i}\phi_{k,i}d\mu_{g_{0}}
=
c_{k}\cdot id
+
O(\frac{1}{\lambda_{i}^{2}})
+
o(\frac{1}{\lambda_{i}^{\frac{n-2}{2}}})
, \;c_{k}>0$
\item
for  $i\neq j$ up to some
$O(\sum_{i\neq j}(\frac{1}{\lambda_{i}^{4}}
+
\varepsilon_{i,j}^{\frac{n+2}{n}}))
+
o(\sum_{i\neq j}\frac{1}{\lambda_{i}^{\frac{n-2}{2}}})$
\begin{equation*}
\int \varphi_{i}^{\frac{n+2}{n-2}}\phi_{k,j}d\mu_{g_{0}}
=
b_{k}d_{k,i}\varepsilon_{i,j}
=
\int \varphi_{i}d_{k,j}\varphi_{j}^{\frac{n+2}{n-2}}  d\mu_{g_{0}}
\end{equation*}
 \item
$
\int \varphi_{i}^{\frac{4}{n-2}}
\phi_{k,i}\phi_{l,i}d\mu_{g_{0}}
=
O(\frac{1}{\lambda_{i}^{2}})+o(\frac{1}{\lambda_{i}^{\frac{n-2}{2}}})$
for $k\neq l$ and for $k=2,3$
$$
\int \varphi_{i}^{\frac{n+2}{n-2}}
\phi_{k,i}d\mu_{g_{0}}
=
O
(
\frac{1}{\lambda_{i}^{4}})
+
o(\frac{1}{\lambda_{i}^{\frac{n-2}{2}}})
$$
 \item
$
\int \varphi_{i}^{\alpha}\varphi_{j}^{\beta} d\mu_{g_{0}}
=
O(\varepsilon_{i,j}^{\beta})
$
for $i\neq j,\;\alpha +\beta=\frac{2n}{n-2}, \; \alpha>\frac{n}{n-2}>\beta\geq 1$
\item
$
\int \varphi_{i}^{\frac{n}{n-2}}\varphi_{j}^{\frac{n}{n-2}} d\mu_{g_{0}}
=
O(\varepsilon^{\frac{n}{n-2}}_{i,j}\ln \varepsilon_{i,j}), \,i\neq j
$
 \item
$
(1, \lambda_{i}\partial_{\lambda_{i}}, \frac{1}{\lambda_{i}}\nabla_{a_{i}})\varepsilon_{i,j}=O(\varepsilon_{i,j})
+
o(\frac{1}{\lambda_{i}^{\frac{n-2}{2}}}+\frac{1}{\lambda_{j}^{\frac{n-2}{2}}})
, \,i\neq j$,
\end{enumerate}
with constants $ b_{k}=\underset{\R^{n}}{\int}\frac{dx}{(1+r^{2})^{\frac{n+2}{2}}}$ for $k = 1, 2, 3$ and
$$
c_{1}=\underset{\R^{n}}{\int}\frac{dx}{(1+r^{2})^{n}},\,
c_{2}=\frac{(n-2)^{2}}{4}\underset{\R^{n}}{\int}\frac{( r^{2}-1)^{2}dx}{(1+r^{2})^{n+2}},\,
c_{3}=\frac{(n-2)^{2}}{n}\underset{\R^{n}}{\int}\frac{r^{2}dx}{(1+r^{2})^{n+2}}.
$$
\end{lemma}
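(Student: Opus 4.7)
The plan is to reduce every one of the assertions (i)--(vii) to a canonical Euclidean integral computation via $g_{a_{i}}$--conformal normal coordinates centered at each concentration point. Under such coordinates the metric has Euclidean volume element, so after the substitution $y=\lambda_{i}x$ the standard bubble $\theta_{a_{i},\lambda_{i}}$ becomes, to leading order, the Aubin--Talenti profile $(1+|y|^{2})^{-(n-2)/2}$ times the conformal factor $u_{a_{i}}=1+O(r_{a_{i}}^{2})$, with remainders encoded by the Green's function decomposition $H_{a_{i}}=H_{r,a_{i}}+H_{s,a_{i}}$ from \eqref{Expansion_Green_Function}. The targeted precision is $\lambda_{i}^{-(n-2)/2}$, which is exactly the threshold at which $Cond_{n}$ matters: for $n\geq 6$ we use local conformal flatness so that $H_{s,a_{i}}\equiv 0$ and $R_{g_{a_{i}}}\equiv 0$ near $a_{i}$, while for $3\leq n\leq 5$ the explicit rates of $H_{s,a_{i}}$ in \eqref{Irregular_Part} together with $R_{g_{a_{i}}}=O(r_{a_{i}}^{2})$ are just good enough, as already exploited in Lemma \ref{lem_L_g_0_of_bubble}.

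For the diagonal statements (i), (ii), (iv) I differentiate the explicit expression $\varphi_{i}=\eta_{a_{i}}\theta_{a_{i},\lambda_{i}}$ directly. The chain rule produces a bounded multiple of $\varphi_{i}$ itself plus terms supported on the annulus where $\eta_{a_{i}}$ varies; there $\varphi_{i}=O(\lambda_{i}^{-(n-2)/2})$, giving the $o$--term in (i). Substituting $y=\lambda_{i}x$ in the diagonal integrals and extending $B_{\lambda_{i}\epsilon}(0)$ to $\R^{n}$ with algebraically small error, (ii) reduces to the three normalized integrals defining $c_{1},c_{2},c_{3}$. For (iv), the leading Euclidean contribution to $\int\varphi_{i}^{4/(n-2)}\phi_{k,i}\phi_{l,i}$ with $k\neq l$ vanishes by radial/angular parity, and similarly for $\int\varphi_{i}^{(n+2)/(n-2)}\phi_{k,i}$ with $k=2,3$; the residual contributions of size $O(\lambda_{i}^{-2})+o(\lambda_{i}^{-(n-2)/2})$ come from the curvature correction $u_{a_{i}}-1$ and the $H_{a_{i}}$ terms.

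For the interaction estimates (iii), (v), (vi) I split $M$ into $B_{\epsilon}(a_{i})$, $B_{\epsilon}(a_{j})$ and the complement. Inside $B_{\epsilon}(a_{i})$ the bubble $\varphi_{i}$ concentrates and rescales, while $\varphi_{j}$ at $a_{i}$ is well-approximated by $\lambda_{j}^{(n-2)/2}\gamma_{n}^{(n-2)/2}G_{g_{0}}(a_{j},\cdot)$; integrating $\int\varphi_{i}^{(n+2)/(n-2)}\phi_{k,j}$ then reproduces $b_{k}d_{k,i}\varepsilon_{i,j}$ after recognizing $\varepsilon_{i,j}$ directly from its definition through $G_{g_{0}}^{2/(2-n)}$. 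Swapping the roles of $i$ and $j$ yields the alternative symmetric expression, and the two agree up to the stated error $O(\varepsilon_{i,j}^{(n+2)/n})+O(\lambda_{i}^{-4})+o(\lambda_{i}^{-(n-2)/2})$ coming from the higher Taylor order of $G_{g_{0}}$ beyond its leading $r^{2-n}$ singularity, which is the standard Bahri interaction computation. The far region contributes only $o$--terms since both bubbles are there of size $O(\lambda^{-(n-2)/2})$. Estimates (v) and (vi) are purely integral: after rescaling near $a_{i}$ they reduce to $\int_{\R^{n}}(1+|y|^{2})^{-\alpha(n-2)/2}|y-c|^{-\beta(n-2)}\,dy$ type integrals, converging for $\alpha>n/(n-2)>\beta\geq 1$ and giving the borderline $\ln\varepsilon_{i,j}$ factor precisely when $\alpha=\beta=n/(n-2)$. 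Finally, (vii) is an explicit differentiation of the closed form of $\varepsilon_{i,j}$, using $(\lambda_{i}\partial_{\lambda_{i}})(\lambda_{j}/\lambda_{i}+\lambda_{i}/\lambda_{j})=O(\lambda_{j}/\lambda_{i}+\lambda_{i}/\lambda_{j})$ and bounding $\frac{1}{\lambda_{i}}\nabla_{a_{i}}G_{g_{0}}^{2/(2-n)}(a_{i},a_{j})$ by $\lambda_{i}^{-1}G_{g_{0}}^{2/(2-n)}(a_{i},a_{j})$ away from the diagonal; the $o$--term absorbs the $H_{a_{i}}$ remainder.

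The main technical obstacle is the simultaneous bookkeeping of \emph{all} error terms down to the level $\lambda_{i}^{-(n-2)/2}$; this is exactly where $Cond_{n}$ is invoked, and without it the singular part $H_{s,a}$ of the Green's function would produce contributions swamping the leading interaction in dimensions $n\geq 6$. Once the dichotomy "$3\leq n\leq 5$ versus locally conformally flat in higher dimension" is installed as in Lemma \ref{lem_L_g_0_of_bubble}, everything else is an orchestrated but routine combination of rescaling, domain decomposition and parity.
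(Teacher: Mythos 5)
Your outline is mathematically sound, but it takes a genuinely different route from the paper: you re-derive the estimates from scratch by the standard Bahri-type computation (conformal normal coordinates, rescaling, parity, Green's function tails), whereas the paper proves nothing of this directly. Its proof is a reduction: it first notes that $\varepsilon_{i,j}$ is well defined because $G_{g_{0}}(a_{i},a_{j})>0$ for $a_{i},a_{j}$ close --- a point your sketch glosses over and which matters here, since in the negative Yamabe case $G_{g_{0}}$ is sign changing, the bubbles are supported in $\{G_{a}>0\}$, and $\varepsilon_{i,j}$ carries its own cut-off $\eta(d_{g_{0}}(a_{i},a_{j}))$ --- and then quotes Lemma 3.5 of \cite{Mayer_Scalar_Curvature_Flow} for all the listed expansions, explaining only why the cut-offs in \eqref{Bubble_Definition} cost at most $o(\lambda_{i}^{-\frac{n-2}{2}})$, resp. $o(\lambda_{i}^{-\frac{n-2}{2}}+\lambda_{j}^{-\frac{n-2}{2}})$: interactions vanish identically when $d_{g_{0}}(a_{i},a_{j})>4\epsilon$ (disjoint supports), both sides are already of that small order when $d_{g_{0}}(a_{i},a_{j})\geq\epsilon/4$, and otherwise the cited expansion applies verbatim; selfactions are handled likewise using $\varphi_{a_{i},\lambda_{i}}=O(\lambda_{i}^{-\frac{n-2}{2}})$ off $B_{\epsilon}(a_{i})$, together with Lemma \ref{lem_L_g_0_of_bubble}. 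Your self-contained derivation buys independence from the earlier paper, but if you carry it out you should repair a few points that the citation route sidesteps: the tail of $\varphi_{j}$ near $a_{i}$ is $\lambda_{j}^{-\frac{n-2}{2}}\gamma_{n}^{-\frac{n-2}{2}}u_{a_{j}}G_{g_{a_{j}}}(a_{j},\cdot)$ (negative powers of $\lambda_{j}$ and $\gamma_{n}$, and the Green's function of $g_{a_{j}}$, which must be converted to $G_{g_{0}}$ by conformal covariance to match the definition of $\varepsilon_{i,j}$); this approximation only covers the regime $\lambda_{j}d_{g_{0}}(a_{i},a_{j})\gg1$, so the comparable-scale regimes of (iii), (v), (vi) still require the full interaction computation rather than the Green's-tail shortcut; and in the band $4\epsilon\leq d_{g_{0}}(a_{i},a_{j})\leq6\epsilon$, where the integrals vanish but $\eta(d_{g_{0}}(a_{i},a_{j}))\neq0$, the mismatch with $\varepsilon_{i,j}$ must be absorbed into the stated $o$-errors --- exactly the case distinction the paper makes explicit.
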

\begin{proof}
First note, that $\varepsilon_{i,j}$ is well defined, as
$G_{g_{0}}(a_{i},a_{j})>0$, if $a_{i},a_{j}$ are close.
Secondly the estimates above are just those of
Lemma 3.5\footnote
{
See also Lemma 3.5 and its proof in the more
detailed version of \cite{Mayer_Scalar_Curvature_Flow}
found at
\url{http://geb.uni-giessen.de/geb/volltexte/2015/11691/}
}
in \cite{Mayer_Scalar_Curvature_Flow} up to some
$$
o(\lambda_{i}^{-\frac{n-2}{2}})
\; \text{ or } \;
o(\lambda_{i}^{-\frac{n-2}{2}}+\lambda_{j}^{-\frac{n-2}{2}})
$$ respectively,
depending on whether we calculate an inter- or selfactions of bubbles.
These errors account for the cut-off functions in \eqref{Bubble_Definition}. Concerning
\begin{enumerate}[label=(\roman*)]
\item 	interactions of two bubbles
$\varphi_{i}$ and $\varphi_{j}$, $i\neq j$, as in e.g. (iii),
these
\begin{enumerate}[label=(\roman*)]
\item[(1)] are zero, if $a_{i},a_{j}$ are far, e.g. $d_{g_{0}}(a_{i},a_{j})>4\epsilon$
\item[(2)] are of order $o(\sfrac{1}{\lambda_{i}^{\frac{n-2}{2}}}+\sfrac{1}{\lambda_{j}^{\frac{n-2}{2}}})$, in case $d_{g_{0}}(a_{i},a_{j})\geq \frac{\epsilon}{4}$
\item[(3)] or else expand as in Lemma 3.5 in \cite{Mayer_Scalar_Curvature_Flow}.
\end{enumerate}
\item the various selfactions of one bubble, e.g. (ii), in view of
Lemma \ref{lem_L_g_0_of_bubble} and
$$\varphi_{a_{i},\lambda_{i}}=O(\lambda_{i}^{-\frac{n-2}{2}}) \; \text{ on } \; B_{\epsilon}(a_{i})^{c}$$
the estimates are up to some $o(\lambda_{i}^{-\frac{n-2}{2}})$
the same as in Lemma 3.5 in \cite{Mayer_Scalar_Curvature_Flow}.
\end{enumerate}
With these hints in mind we leave the details to the reader.
\end{proof}

\subsection{Blow-up Description}
The general characterization of Palais-Smale sequences hold for $I$ and $J$ alike.

\begin{proposition}\label{prop_blow_up_analysis}
Let
$K\in C^{\infty}(M)$ satisfy some A-B-inequality on $H^1(M)$ and
$$(u_{m}) \subset X \; \text{ or } \; \;(u_{m})	\subset \;Y$$
be a sequence, for which
\begin{equation*}
J(u_{m})
=
\frac{-k_{u_{m}}}{(-r_{u_{m}})^{\frac{n}{n-2}}}
\longrightarrow
c>0
\;\text{ and }\;
\partial J(u_{m})\longrightarrow 0
\; \text{ in }\;
W^{-1,2}(M).
\end{equation*}
or
\begin{equation*}
I(u_{m})
=
\frac{r_{u_{m}}}{(k_{u_{m}})^{\frac{n-2}{n}}}
\longrightarrow
c>0
\;\text{ and }\;
\partial I(u_{m})\longrightarrow 0
\; \text{ in }\;
W^{-1,2}(M)
\end{equation*}
respectively. Then  up to a subsequence
$$
r_{u_{m}}=r_{m}\longrightarrow r_{\infty}<0
,\;
k_{u_{m}}=k_{m}\longrightarrow k_{\infty}<0
$$
or
$$
r_{u_{m}}=r_{m}\longrightarrow r_{\infty}> 0
,\;
k_{u_{m}}=k_{m}\longrightarrow k_{\infty}> 0
$$
respectively
and in either case
there exist a solution
$$0\leq u_\infty \in C^{\infty}(M)
\; \text{ to } \;
L_{g_{0}}u_{\infty}=Ku_{\infty}^{\frac{n+2}{n-2}}
\; \text{ with either } \;
u_{\infty}\equiv 0
\; \text{ or } \; 	
u_{\infty}>0,
$$
a sequence $\R_{+}\supset(\alpha_{m})\longrightarrow \alpha_{\infty}$,
$q \in \mathbb{N}_{0}$ and for $i=1,\ldots,q$ sequences
\begin{equation*}\begin{split}
M \supset (a_{i,m})\xlongrightarrow{m \to \infty} a_{i_{\infty}}
\; \text{ and }\;
\R_{+}\supset \lambda_{i,m} \xlongrightarrow{m \to \infty} \infty
\end{split}\end{equation*}
such,  that
$u_{m}
=
\alpha_{m}	u_{\infty }
+
\sum_{i=1}^{q}\alpha_{i,m}\varphi_{a_{i,m},\lambda_{i,m}}+v_{m}$
with
\begin{equation*}
\Vert v_{m} \Vert \longrightarrow 0,\;
\frac{r_{\infty}K(a_{i_{\infty}})\alpha_{i_{\infty}}^{\frac{4}{n-2}}}
{4n(n-1)k_{\infty}}= 1
\; \text{ and, if }\;
u_{\infty} \not \equiv 0,
\; \text{ then } \;
\frac{r_{\infty}\alpha_{\infty}^{\frac{4}{n-2}}}{k_{\infty}}=1.
\end{equation*}
Moreover, if $q>1$, then $(\varepsilon_{i,j})_{m} \xrightarrow{m\to \infty} 0$
for each pair $1\leq i<j\leq q$.
 \end{proposition}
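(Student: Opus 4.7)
The plan is to follow a standard Struwe-type profile decomposition, adapted to the normalized functionals $J$ and $I$ in this setting.

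First, I would use the A-B-inequality together with Lemmata \ref{lem_implications_of_A_B_inequality_on_J} and \ref{lem_implications_of_A_B_inequality_on_I} to conclude that along a Palais--Smale sequence at a finite positive level one has uniform $H^{1}$-bounds on $(u_{m})$, while $|k_{m}|$ and $|r_{m}|$ are pinched strictly between two positive constants. Extracting a subsequence, we may assume $r_{m}\to r_{\infty}$ and $k_{m}\to k_{\infty}$ of the claimed signs, together with $u_{m}\rightharpoondown \tilde u_{\infty}\geq 0$ weakly in $H^{1}(M)$ and strongly in $L^{p}(M)$ for $p<\tfrac{2n}{n-2}$.

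Second, the Palais--Smale condition for $J$ reads $\tfrac{-k_{m}}{-r_{m}}L_{g_{0}}u_{m}-Ku_{m}^{\frac{n+2}{n-2}}\to 0$ in $W^{-1,2}(M)$, and likewise for $I$ with coefficient $r_{m}/k_{m}$. Passing to the weak limit gives
\begin{equation*}
\frac{-k_{\infty}}{-r_{\infty}}L_{g_{0}}\tilde u_{\infty} = K\tilde u_{\infty}^{\frac{n+2}{n-2}}.
\end{equation*}
Setting $u_{\infty}=\tilde u_{\infty}/\alpha_{\infty}$ with $\alpha_{\infty}$ determined by $r_{\infty}\alpha_{\infty}^{4/(n-2)}/k_{\infty}=1$ produces a nonnegative weak solution of $L_{g_{0}}u_{\infty}=Ku_{\infty}^{\frac{n+2}{n-2}}$; elliptic regularity upgrades it to $C^{\infty}(M)$, and the weak Harnack/strong maximum principle forces either $u_{\infty}\equiv 0$ or $u_{\infty}>0$.

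Third, and this is the main step, I would carry out the iterative bubble extraction on the residual $w_{m}^{(0)}=u_{m}-\alpha_{m}u_{\infty}$, with $\alpha_{m}=\alpha_{\infty}$. A Brezis--Lieb type splitting shows that $w_{m}^{(0)}\rightharpoondown 0$ is again a Palais--Smale-type sequence for the critical equation with frozen coefficient $-k_{\infty}/-r_{\infty}$. If $\Vert w_{m}^{(0)}\Vert_{H^{1}}\not\to 0$, a concentration-function argument selects $a_{1,m}\to a_{1_{\infty}}\in M$ and $\lambda_{1,m}\to\infty$ such that a rescaling of $w_{m}^{(0)}$ in conformal normal coordinates at $a_{1,m}$ converges to a nontrivial nonnegative entire solution $U$ of $\tfrac{-k_{\infty}}{-r_{\infty}}(-c_{n}\Delta)U=K(a_{1_{\infty}})U^{\frac{n+2}{n-2}}$ on $\R^{n}$. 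By the Caffarelli--Gidas--Spruck classification $U$ is an Aubin--Talenti bubble, and matching its coefficient to the equation yields exactly $r_{\infty}K(a_{1_{\infty}})\alpha_{1,\infty}^{4/(n-2)}/(4n(n-1)k_{\infty})=1$. One then approximates $U$ by $\alpha_{1,m}\varphi_{a_{1,m},\lambda_{1,m}}$ from Section \ref{section_bubbles}, absorbing the corrections via Lemma \ref{lem_L_g_0_of_bubble}, and subtracts this bubble to form $w_{m}^{(1)}$.

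The hard part is the termination and separation argument. Since each extracted profile carries a fixed quantum of critical Sobolev mass (the Sobolev best constant), the uniform $H^{1}$-bound on $(u_{m})$ forces the process to terminate after finitely many steps $q$, leaving $v_{m}=w_{m}^{(q)}$ with $\Vert v_{m}\Vert_{H^{1}}\to 0$. The separation $(\varepsilon_{i,j})_{m}\to 0$ for $i\neq j$ is automatic from the concentration-function selection: if $\varepsilon_{i,j}\not\to 0$, then rescaling $w_{m}^{(j-1)}$ about $(a_{j,m},\lambda_{j,m})$ would not yield zero weak limit, contradicting the construction at the $j$-th step. Finally, the sign of $(r_{\infty},k_{\infty})$ matches that enforced on $X$ (resp. $Y$), since the A-B-inequality combined with $J(u_{m})\to c>0$ (resp. $I(u_{m})\to c>0$) forbids the degeneration $|r_{m}|,|k_{m}|\to 0$, as used already in Lemmata \ref{lem_implications_of_A_B_inequality_on_J} and \ref{lem_implications_of_A_B_inequality_on_I}.
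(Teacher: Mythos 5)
Your proposal is correct and follows essentially the same route as the paper: the paper's proof simply invokes the blow-up analysis of \cite{MM1}, observing that the only point to check in the present setting is that one may pass to limits $r_{m}\to r_{\infty}\neq 0$ and $k_{m}\to k_{\infty}\neq 0$, which is exactly the non-degeneration you extract from the A-B-inequality via Lemmata \ref{lem_implications_of_A_B_inequality_on_J} and \ref{lem_implications_of_A_B_inequality_on_I} before running the standard Struwe-type profile decomposition. Your sketch just spells out in-line the concentration-compactness argument that the paper delegates to the cited reference, including the correct coefficient relations for $\alpha_{\infty}$ and $\alpha_{i_{\infty}}$.
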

\begin{proof}
This follows as in \cite{MM1}, which does not rely on the positivity of $L_{g_{0}}$, but on being
able to pass $r_{m}\longrightarrow r_{\infty}\neq 0$ and $k\longrightarrow k_{\infty}\neq 0$,
which we may assume here thanks to Lemmata
\ref{lem_implications_of_A_B_inequality_on_J} and \ref{lem_implications_of_A_B_inequality_on_I}.
\end{proof}
Note, that $L_{g_{0}}u_{\infty}=Ku_{\infty}^{\frac{n+2}{n-2}}$ implies
$r_{u_{\infty}}=k_{u_{\infty}}$, whence $r_{u_{\infty}}=0$ or $k_{u_{\infty}}=0$ are impossible
from the presumed validity of an A-B-inequality.
Moreover note, that necessarily $K(a_{i_{\infty}})>0$, that is, if blow-up happens, then on $\{ K>0 \}$.

In order to have a unique representation of a blow-up scenario,
we perform a standard reduction procedure, see \cite{Mayer_Scalar_Curvature_Flow}, as follows.
\begin{definition}%\label{V_p_e}
For $\varepsilon>0,\; q\in \N$ and $ u\in H^{1}(M)$ let

\begin{enumerate}[label=(\roman*),leftmargin=16pt]
\item
$ _{} $ \vspace{-21pt}
\begin{equation*}
\begin{split}
\hspace{-2pt}
A_{u}(u_{\infty,}q, \varepsilon)
 =
\{
&
(
\alpha ,\alpha_{i}, \lambda_{i}, a_{i})\in \R \times \R^{q}\times \R^{q}_{+} \times M^{q} \; : \;
\underset{i\neq j}{\forall\;}\;
\lambda_{i}^{-1}, \lambda_{j}^{-1}, \varepsilon_{i,j},
\\
&
\vert 1-\frac{r \alpha^{\frac{4}{n-2}} }{k} \vert,
\vert 1-\frac{r\alpha_{i}^{\frac{4}{n-2}}K(a_{i})}{4n(n-1)k}\vert,
\Vert u-\alpha u_{\infty}-\alpha^{i}\varphi_{a_{i}, \lambda_{i}}\Vert
\leq\varepsilon
\}
\end{split}
\end{equation*}
 \item
 $
 V(u_{\infty}, q, \varepsilon)
 =
 \{
 u\in W^{1,2}(M)
 \mid
 A_{u}(u_{\infty},q, \varepsilon)\neq \emptyset
 \}
 $.
\end{enumerate}
Here $\|\cdot\| = \|\cdot\|_{D_{g_0}}$ denotes the Sobolev norm induced by $D_{g_0} = L_{g_0}+2$.
\end{definition}
As shown in the Appendix, given $u \in V(u_{\infty},q,\varepsilon)$
and fixed $(\bar{a}_{i},\bar\lambda_{i})$,
by invertibility of the linear operator $L_{g_{0}}$ we may \textit{uniquely} write
\begin{equation}\label{orthogonalities_v_to_u_infty_and_varphi}
u=\bar{\alpha }u_{\infty}+\bar{\alpha}^{i}\varphi_{ \bar{a}_i,\bar{\lambda}_i}+\bar{v}
\; \text{ with } \;
\bar{v} \perp_{L_{g_{0}}}span\{u_{\infty},\varphi_{ \bar{a}_i,\bar{\lambda}_i}\}
\end{equation}
and then minimize along natural projection $\Pi$ induced by \eqref{orthogonalities_v_to_u_infty_and_varphi}, as follows.
\begin{lemma}\label{lem_optimal_choice} %\\
For every $\varepsilon_{0}>0$  there exists $0<\varepsilon_{2}<\varepsilon_{1}<\varepsilon_{0}$ such,
that  for any
$$u\in V(u_{\infty}, q, \varepsilon_{2})$$
the problem
\begin{equation*}\begin{split}
\inf_
{
(\tilde a_{i}, \tilde\lambda_{i})
\in \Pi_{(a_i, \lambda_i)}A_{u}(u_{\infty},q,2\varepsilon_{1})
}
\int u^{\frac{4}{n-2}}\vert u-\tilde{\alpha}u_{\infty} - \tilde{\alpha}^{i}\varphi_{a_{i},\lambda_{i}} \vert^{2}d\mu_{g_{0}}
\end{split}\end{equation*}
admits a unique minimizer $(a_i, \lambda_i)$ with $(\alpha, \alpha_i, a_i, \lambda_i)\in A_{u}(u_{\infty},q,\varepsilon_{1})$. Setting
\begin{equation*}
\begin{split} %\label{eq:v}
\varphi_{i}=\varphi_{a_{i}, \lambda_{i}},
\;
v=u-\alpha u_{\infty} - \alpha^{i}\varphi_{i},
\end{split}
\end{equation*}
we have in addition to $v \perp_{L_{g_{0}}}span\{u_{\infty},\varphi_{ a_i,\lambda_i}\}$ from  \eqref{orthogonalities_v_to_u_infty_and_varphi}, that
\begin{enumerate}[label=(\roman*)]
\item the quantities
$\langle \lambda_{i}\partial_{\lambda_{i}}\varphi_{a_{i},\lambda_{i}},v\rangle_{L_{g_{0}}}$
and
$\int u^{\frac{4}{n-2}}\lambda_{i}\partial_{\lambda_{i}}\varphi_{a_{i},\lambda_{i}}vd\mu_{g_{0}}$
are of order
\begin{equation*}
\begin{split}
O
(
\sum_{i}
\frac{1}{\lambda_{i}^{n-2}}      &       + \sum_{i\neq j}\varepsilon_{i,j}^{2}  +  \Vert v \Vert^{2}
)
+
O(\sum_{i}
\vert
\langle \varphi_{a_{i},\lambda_{i}},\lambda_{i}\partial_{\lambda_{i}}\varphi_{a_{i},\lambda_{i}}\rangle_{L_{g_{0}}} \vert^{2}
) \\
& +
O(\sum_{i}\Vert
\lambda_{i}\partial_{\lambda_{i}}L_{g_{0}}\varphi_{a_{i},\lambda_{i}}
-
4n(n-1)\lambda_{i}\partial_{\lambda_{i}}\varphi_{a_{i},\lambda_{i}}^{\frac{n+2}{n-2}}
\Vert_{L_{g_{0}}^{2}}^{2}
)
\end{split}
\end{equation*}
and, if $Cond_{n}$ holds at all $a_{i}$, of order
$
o_{\varepsilon_{1}}(\sum_{i}\frac{1}{\lambda_{i}^{\frac{n-2}{2}}} )
+
O( \sum_{i\neq j}\varepsilon_{i,j}^{2} + \Vert v \Vert^{2} )
$

\item the quantities
$\langle \frac{\nabla_{a_{i}}}{\lambda_{i}}\varphi_{a_{i},\lambda_{i}},v\rangle_{L_{g_{0}}}$
and
$\int u^{\frac{4}{n-2}}\frac{\nabla_{a_{i}}}{\lambda_{i}}\varphi_{a_{i},\lambda_{i}}vd\mu_{g_{0}}$
are of order
\begin{equation*}
\begin{split}
O
(
\sum_{i}
\frac{1}{\lambda_{i}^{n-2}}       &        + \sum_{i\neq j}\varepsilon_{i,j}^{2}  +  \Vert v \Vert^{2}
)
+
O(\sum_{i}
\vert
\langle \varphi_{a_{i},\lambda_{i}},\frac{\nabla_{a_{i}}}{\lambda_{i}}\varphi_{a_{i},\lambda_{i}}\rangle_{L_{g_{0}}} \vert^{2}
) \\
& +
O(\sum_{i}\Vert
\lambda_{i}\partial_{\lambda_{i}}L_{g_{0}}\varphi_{a_{i},\lambda_{i}}
-
4n(n-1)\lambda_{i}\partial_{\lambda_{i}}\varphi_{a_{i},\lambda_{i}}^{\frac{n+2}{n-2}}
\Vert_{L_{g_{0}}^{2}}^{2}
)
\end{split}
\end{equation*}
and, if $Cond_{n}$ holds at all $a_{i}$, of order
$
o_{\varepsilon_{1}}(\sum_{i}\frac{1}{\lambda_{i}^{\frac{n-2}{2}}} )
+
O( \sum_{i\neq j}\varepsilon_{i,j}^{2} + \Vert v \Vert^{2} ).
$
\end{enumerate}
\end{lemma}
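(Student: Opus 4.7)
The plan is to treat this as a standard implicit-function-theorem reduction in the spirit of Bahri's blow-up analysis, adapted here so that the parameters $(\alpha,\alpha^i)$ are fixed by the $L_{g_0}$-orthogonality \eqref{orthogonalities_v_to_u_infty_and_varphi} while $(a_i,\lambda_i)$ are selected by minimising a weighted $L^2$ distance. First I would observe that, given any $(\bar a_i,\bar\lambda_i)$ close to a reference point in $A_u(u_\infty,q,\varepsilon_2)$, the coefficients $(\bar\alpha,\bar\alpha^i)$ in \eqref{orthogonalities_v_to_u_infty_and_varphi} exist and are unique, because the Gram matrix of $\{u_\infty,\varphi_{\bar a_i,\bar\lambda_i}\}$ with respect to $L_{g_0}$ is diagonally dominant: Lemma \ref{lem_interactions}(ii) gives diagonal entries of size $c_1 + O(\lambda_i^{-2})$ while the cross entries are bounded by $\varepsilon_{i,j}$ and by $\langle u_\infty,\varphi_j\rangle_{L_{g_0}}=O(\lambda_j^{-(n-2)/2})$. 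Smoothness of $(\bar\alpha,\bar\alpha^i)$ in $(\bar a_i,\bar\lambda_i)$ is then automatic.

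Existence and uniqueness of the minimiser follow from computing the Hessian of the functional
\begin{equation*}
F(\tilde a_i,\tilde\lambda_i)=\int u^{\frac{4}{n-2}}\bigl|u-\tilde\alpha u_\infty-\tilde\alpha^i\varphi_{\tilde a_i,\tilde\lambda_i}\bigr|^2 d\mu_{g_0}
\end{equation*}
in the variables $(\tilde\lambda_i,\tilde a_i)$. At leading order, using Lemma \ref{lem_interactions}(ii) and (iv), the Hessian is block-diagonal with diagonal blocks comparable to $c_2(\alpha^i)^2$ and $c_3(\alpha^i)^2\,\mathrm{id}$, while the coupling to other bubbles is controlled by $\varepsilon_{i,j}^{n/(n-2)}$ and the coupling to $v$ by $\Vert v\Vert$. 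For $\varepsilon_2$ small enough the Hessian is uniformly positive definite on the admissible set, so the implicit function theorem produces a unique critical point, which lies in $A_u(u_\infty,q,\varepsilon_1)$ once $\varepsilon_2\ll\varepsilon_1$.

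The heart of the lemma is the translation of the Euler--Lagrange equations for $F$ into the estimates (i) and (ii). Differentiating $F$ with respect to $\lambda_i$ gives
\begin{equation*}
\int u^{\frac{4}{n-2}}v\cdot\bigl[\,(\partial_{\lambda_i}\tilde\alpha)u_\infty+(\partial_{\lambda_i}\tilde\alpha^j)\varphi_j+\alpha^i\,\partial_{\lambda_i}\varphi_i\,\bigr]\,d\mu_{g_0}=0,
\end{equation*}
and I would extract $\partial_{\lambda_i}\tilde\alpha$ and $\partial_{\lambda_i}\tilde\alpha^j$ by differentiating the linear system defining them from \eqref{orthogonalities_v_to_u_infty_and_varphi}; the resulting bounds on these derivatives, combined with Cauchy--Schwarz against $v$, produce exactly the collected error $O(\sum\lambda_i^{2-n}+\sum_{i\neq j}\varepsilon_{i,j}^2+\Vert v\Vert^2)$. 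This already yields the weighted $L^2$ orthogonality part of (i). To pass to the $L_{g_0}$-inner product I would invoke Lemma \ref{lem_L_g_0_of_bubble} to write
\begin{equation*}
\lambda_i\partial_{\lambda_i}L_{g_0}\varphi_i=4n(n-1)\lambda_i\partial_{\lambda_i}\varphi_i^{\frac{n+2}{n-2}}+E_i,
\end{equation*}
so that
\begin{equation*}
\langle\lambda_i\partial_{\lambda_i}\varphi_i,v\rangle_{L_{g_0}}=\tfrac{4n(n-1)(n+2)}{n-2}\int\varphi_i^{\frac{4}{n-2}}(\lambda_i\partial_{\lambda_i}\varphi_i)\,v\,d\mu_{g_0}+\int E_i v\,d\mu_{g_0},
\end{equation*}
and then replace $\varphi_i^{4/(n-2)}$ by $u^{4/(n-2)}$ on the support of $\varphi_i$: away from $a_i$ the bubble is $O(\lambda_i^{-(n-2)/2})$, while near $a_i$ one has $u\sim\alpha^i\varphi_i$ up to contributions from $u_\infty$, $\varphi_j$ ($j\neq i$) and $v$, whose errors after Hölder are absorbed in $\Vert v\Vert^2$, $\varepsilon_{i,j}^2$ and $\lambda_i^{2-n}$. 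The term $|\langle\varphi_i,\lambda_i\partial_{\lambda_i}\varphi_i\rangle_{L_{g_0}}|^2$ appears because the exact $L_{g_0}$-orthogonality $v\perp\varphi_i$ must be used to suppress the contribution of $\varphi_i$ inside $u^{4/(n-2)}$. Under $Cond_n$ the error $E_i$ and the pairing $\langle\varphi_i,\lambda_i\partial_{\lambda_i}\varphi_i\rangle_{L_{g_0}}$ are both $o_{\varepsilon_1}(\lambda_i^{-(n-2)/2})$ by the sharper form of Lemma \ref{lem_L_g_0_of_bubble}, which yields the improved estimate.

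The case (ii) of the horizontal derivative $\nabla_{a_i}/\lambda_i$ is entirely parallel, with $c_2$ replaced by $c_3$ in the Hessian computation and the same substitution $L_{g_0}\varphi_i \leftrightarrow 4n(n-1)\varphi_i^{(n+2)/(n-2)}$. The main technical obstacle is the accounting of cross terms when differentiating the implicit definition of $(\tilde\alpha,\tilde\alpha^i)$ and when converting the $u^{4/(n-2)}$-weight to a $\varphi_i^{4/(n-2)}$-weight; in particular one must verify that every such error fits into one of the four buckets $\lambda_i^{2-n}$, $\varepsilon_{i,j}^2$, $\Vert v\Vert^2$, or the two explicit self-interaction quantities, and that under $Cond_n$ the residual from Lemma \ref{lem_L_g_0_of_bubble} genuinely produces the $o_{\varepsilon_1}(\lambda_i^{-(n-2)/2})$ refinement rather than only an $O$-bound.
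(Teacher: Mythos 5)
Your handling of the almost-orthogonality estimates (i) and (ii) is essentially the paper's own argument: the first-order conditions of the weighted $L^{2}$ minimization, control of $\lambda_{i}\partial_{\lambda_{i}}\tilde{\alpha}$ and $\lambda_{i}\partial_{\lambda_{i}}\tilde{\alpha}^{j}$ by differentiating the linear system coming from \eqref{orthogonalities_v_to_u_infty_and_varphi}, Cauchy--Schwarz and Young to collect the squared buckets, and Lemma \ref{lem_L_g_0_of_bubble} to convert the weighted pairing into the $L_{g_{0}}$-pairing. One misattribution: the term $\vert\langle \varphi_{a_{i},\lambda_{i}},\lambda_{i}\partial_{\lambda_{i}}\varphi_{a_{i},\lambda_{i}}\rangle_{L_{g_{0}}}\vert^{2}$ does not arise from replacing $u^{\frac{4}{n-2}}$ by $\varphi_{i}^{\frac{4}{n-2}}$; it enters through the bound on $\lambda_{i}\partial_{\lambda_{i}}\alpha_{i}$ obtained by differentiating the orthogonality $\langle v,\varphi_{a_{i},\lambda_{i}}\rangle_{L_{g_{0}}}=0$, cf. \eqref{alpha_i_derivative_tested}--\eqref{alpha_i_derivative_estimated}; this does not change the final buckets, but the bookkeeping should be done as in \eqref{alpha_derivative_tested}--\eqref{alpha_and_alpha_i_derivatives_estimate}.

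Where you genuinely diverge is the existence and uniqueness of the minimizer. The paper never computes second derivatives: it obtains interior attainment by comparing the infimum with the value at the given representation, which forces $A+\sum_{i}(A_{i}+L_{i}+D_{i})=o_{\varepsilon_{2}}(1)$ as in \eqref{minimizer_difference_estimate}, and proves uniqueness via a contraction-type estimate for two critical representations, $A+\sum_{i}(A_{i}+L_{i}+D_{i})=o_{\varepsilon_{1}}(A+\sum_{i}(A_{i}+L_{i}+D_{i}))$. Your Hessian-plus-implicit-function-theorem route can in principle replace this, but as written it has two concrete gaps. First, the uniform positivity of the Hessian of $F$ is asserted rather than proved: since $(\tilde{\alpha},\tilde{\alpha}^{i})$ are implicit functions of $(\tilde{a}_{i},\tilde{\lambda}_{i})$, the Hessian contains first \emph{and second} derivatives of this coefficient map, as well as pairings of the residual with second derivatives of the bubbles; you would need second-order analogues of \eqref{alpha_derivative_tested}--\eqref{alpha_and_alpha_i_derivatives_estimate}, and you would need positivity uniformly over the whole projected set of size $2\varepsilon_{1}$ (not merely near the given parameters), together with the fact that this region sits inside a convex set in scale-invariant coordinates --- positive definiteness of the Hessian only at critical points does not by itself exclude several interior minimizers. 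Second, your claim that the minimizer lies in $A_{u}(u_{\infty},q,\varepsilon_{1})$ once $\varepsilon_{2}\ll\varepsilon_{1}$ is precisely the interior-attainment step and cannot be waved through: one must test the infimum with the given representation and then show that a small value of the weighted $L^{2}$ distance forces parameter closeness; without this the minimizer could a priori lie on the boundary of the $2\varepsilon_{1}$-region, where it need not satisfy the first-order conditions on which your proof of (i) and (ii) rests.
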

\noindent
The proof, postponed to the Appendix, is technically analogous to the case of a positive Yamabe invariant,
cf. Appendix A in \cite{Bahri_Coron_Three_Dimensional_Sphere} and Proposition 3.10 in \cite{Mayer_Scalar_Curvature_Flow},
where thanks to $L_{g_{0}}>0$ we may  minimize over all variables
$$
\inf_{(\alpha,\alpha_{i},a_{i},\lambda_{i})\in A_{u}(u_{\infty},q,2\varepsilon_{1})}
\Vert u-\alpha u_{\infty}-\alpha^{i}\varphi_{a_{i},\lambda_{i}} \Vert_{L_{g_{0}}}^{2},
$$
which is not feasible in our context.
Here the \textit{linear} variables $(\alpha,\alpha_i)$ are chosen for the sake of the $L_{g_{0}}$-orthogonalities in \eqref{orthogonalities_v_to_u_infty_and_varphi},
since for proper estimates and expansions, which we will require in  \cite{MZ2}, at least $\langle u_{\infty},v \rangle_{L_{g_{0}}}=0$ is indispensable.
Fortunately Lemma \ref{lem_optimal_choice} still provides sufficient \textit{almost}-orthogonalities in $(\lambda_{i},a_{i})$.

\begin{remark}
With these notions at hand the lack of zero weak limit blow-ups along flow lines
and in particular of, as they are referred to, pure critical points at infinity becomes {\normalfont{heuristically}} clear.
\begin{figure}[h!]
\centering
\includegraphics[width=\textwidth]{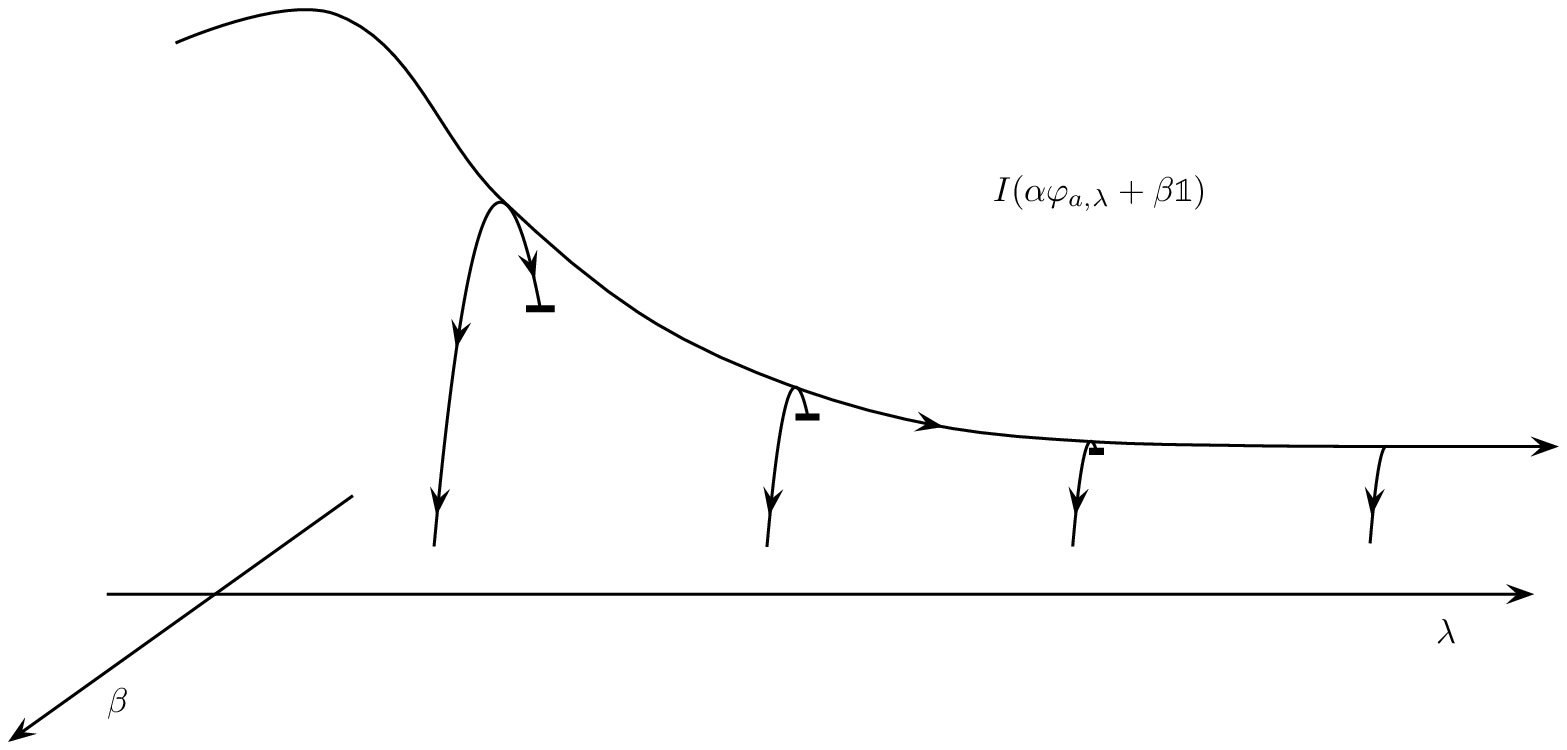}
\caption{}
\label{pic_v_part}
\centering
\end{figure}
Indeed consider $u=\alpha \varphi +v \in Y$ and compute from
Proposition \ref{prop_blow_up_analysis} for instance and up to some $o(\Vert v \Vert^{2})$
\begin{equation*}
\begin{split}
I(u)
= \; &
\frac{r}{k^{\frac{n-2}{n}}}
=
\frac{r_{\varphi}}{k_{\varphi}^{\frac{n-2}{n}}}
+
o_{\frac{1}{\lambda}}(\Vert v \Vert) \\
& +
\frac{1}{k^{\frac{n-2}{n}}}
(
\int L_{g_{0}}vvd\mu_{g_{0}}
-
\frac{n+2}{n-2}
\frac{r\alpha^{\frac{4}{n-2}}}{k}\int K\varphi^{\frac{4}{n-2}}v^{2}d\mu_{g_{0}}
)
\\
= \; &
\frac{r_{\varphi}}{k_{\varphi}^{\frac{n-2}{n}}}
+
o_{\frac{1}{\lambda}}(\Vert v \Vert)
+
\frac{c_{n}}{k^{\frac{n-2}{n}}}
(
\int \frac{L_{g_{0}}vv}{c_{n}}d\mu_{g_{0}}
-
n(n+2)
\int \varphi^{\frac{4}{n-2}}v^{2}d\mu_{g_{0}}
).
\end{split}
\end{equation*}
Thus for $\lambda \gg 1$ the constant functions $\pm \mathbb{1}$ become
principally negative directions, see Figure \ref{pic_v_part}, i.e. directions along which we may decrease energy,
in contrast to the positive Yamabe case, where in a suitable Yamabe metric
$L_{g_{0}}=-c_{n}\Delta_{g_{0}}+1$
instead of, as here,
$L_{g_{0}}=-c_{n}\Delta_{g_{0}}-1$.
That is, in the positive Yamabe case
it is generally opportune to decrease $v$,
while here it is not and we furthermore cannot reasonably
increase $v$ along $-\mathbb{1}$, since then $u>0$ would lose positivity.
Conversely $+\mathbb{1}$ becomes a preferred direction to decrease energy.
On the other hand, if $u=\alpha_{0}u_{\infty} + \alpha \varphi_{a,\lambda} + v$ with $u_{\infty}>0$
these issues clearly do not occur.
\end{remark}

\subsection{Compactness and Existence}\label{sec_compactness_and_existence}
As discussed above, the case $u_{\infty}=0$ does not occur, when flowing,
while non zero weak limit blow-ups may principally occur for $I$ or $J$.
The latter case is ruled out under a smallness assumption on $K$.
\begin{lemma}\label{lem_no_blow_up_for_J}
If an A-B-inequality holds on $H^1(M)$, then the flow generated by
$$
\partial_{t}u=-(\frac{k}{r}R-K)u
$$
is compact on $X$, provided  $ 0\leq \sup_M K $ sufficiently small.
\end{lemma}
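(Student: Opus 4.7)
The plan is to extract a Palais-Smale sequence from any asymptotic piece of the flow line, apply Proposition \ref{prop_blow_up_analysis}, and then use the two matching identities of that proposition together with the volume normalization $\Vert u\Vert_{L^{2^{*}}}=1$ built into $X$ in order to show that every bubbling scenario forces $\sup_M K$ to exceed an explicit threshold depending only on the A-B-constants.

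First I would set up the Palais-Smale extraction. By Lemma \ref{lem_implications_of_A_B_inequality_on_J} the A-B-inequality keeps $-k,-r$ uniformly bounded away from $0$ and $\infty$ along any flow line with bounded initial $J$, whence Proposition \ref{prop_flow_properties}(iii) together with Lemma \ref{lem_long_time_existence} yields long-time existence, and Proposition \ref{prop_flow_properties}(ii) gives $\int_{0}^{\infty}\vert\partial J\vert^{2}(u(t))\,dt<\infty$. Choosing $t_m\to\infty$ with $\vert\partial J\vert(u_m)\to 0$, $u_m:=u(t_m,\cdot)$, Proposition \ref{prop_blow_up_analysis} produces a decomposition
\[
u_m=\alpha_m u_\infty+\sum_{i=1}^{q}\alpha_{i,m}\varphi_{a_{i,m},\lambda_{i,m}}+v_m,\qquad \Vert v_m\Vert\to 0,
\]
together with $r_m\to r_\infty<0$, $k_m\to k_\infty<0$, $a_{i,m}\to a_{i_\infty}\in\{K>0\}$ and, if $u_\infty\not\equiv 0$,
\[
\alpha_\infty^{\frac{4}{n-2}}=\frac{k_\infty}{r_\infty},\qquad \alpha_{i_\infty}^{\frac{4}{n-2}}=\frac{4n(n-1)\,k_\infty}{r_\infty\,K(a_{i_\infty})}.
\]
Compactness of the flow then reduces to showing $q=0$ and $u_\infty\not\equiv 0$, in which case $u_m\to\alpha_\infty u_\infty$ strongly in $H^{1}$.

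The two degenerate scenarios are dispatched at once. If $q=0$ and $u_\infty\equiv 0$ then $u_m\to 0$ in $H^{1}$, contradicting $\Vert u_m\Vert_{L^{2^{*}}}=1$; if instead $q\ge 1$ but $u_\infty\equiv 0$, the concentration of each $\varphi_{a_{i,m},\lambda_{i,m}}$ at $a_{i_\infty}\in\{K>0\}$ forces
\[
k_m=\int_M Ku_m^{2^{*}}d\mu_{g_0}\longrightarrow c_0\sum_{i=1}^{q}\alpha_{i_\infty}^{2^{*}}K(a_{i_\infty})>0,\qquad c_0=\int_{\R^n}(1+|x|^{2})^{-n}\,dx,
\]
contradicting $k_\infty<0$. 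The main case is $u_\infty\not\equiv 0$ with $q\ge 1$. Dividing the two matching identities gives $\alpha_{i_\infty}^{2^{*}}=\bigl(4n(n-1)/K(a_{i_\infty})\bigr)^{n/2}\alpha_\infty^{2^{*}}$, while the $L^{2^{*}}$-expansion of $1=\Vert u_m\Vert_{L^{2^{*}}}^{2^{*}}$, whose cross terms vanish in the limit by Lemma \ref{lem_interactions} and the $L^{\infty}$-boundedness of $u_\infty$, reads
\[
1=\alpha_\infty^{2^{*}}\Bigl(\Vert u_\infty\Vert_{L^{2^{*}}}^{2^{*}}+c_0\sum_{i=1}^{q}\Bigl(\tfrac{4n(n-1)}{K(a_{i_\infty})}\Bigr)^{n/2}\Bigr)+o(1),
\]
hence $\alpha_\infty^{2^{*}}\le (\sup_M K)^{n/2}\big/\bigl(c_0\,(4n(n-1))^{n/2}\bigr)+o(1)$. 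On the other hand the first matching together with Lemma \ref{lem_implications_of_A_B_inequality_on_J} yields the lower bound
\[
\alpha_\infty^{\frac{4}{n-2}}=\frac{-k_\infty}{-r_\infty}\ge \frac{\inf_X(-k)}{\sup_X(-r)}=:c_\star>0,
\]
i.e.\ $\alpha_\infty^{2^{*}}\ge c_\star^{n/2}$. These two estimates become incompatible as soon as $\sup_M K<c_\star\,c_0^{2/n}\,4n(n-1)$, ruling out $q\ge 1$ and establishing compactness.

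The hard part will be the control of the cross terms in the $L^{2^{*}}$-expansion of $\Vert u_m\Vert_{L^{2^{*}}}^{2^{*}}$ uniformly in $m$; this rests on the almost-orthogonality estimates in Lemma \ref{lem_interactions} combined with the global $L^{\infty}$-bound on $u_\infty$ by standard elliptic regularity, which together give $\int_M u_\infty^{a}\varphi_{i,m}^{b}d\mu_{g_0}\to 0$ whenever $a+b=2^{*}$ and $b>0$. A subsidiary point is the upgrade from $\int_0^{\infty}\vert\partial J\vert^{2}dt<\infty$ to a sequence with $\vert\partial J\vert(u_m)\to 0$ in $W^{-1,2}$ as required by Proposition \ref{prop_blow_up_analysis}, but this follows from the second inequality in Proposition \ref{prop_flow_properties}(ii) once the uniform controls on $-r$ and $\Vert u\Vert_{L^{2^{*}}}$ along the flow are in hand.
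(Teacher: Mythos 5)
Your argument is correct, and it follows the paper's skeleton (Palais--Smale extraction along the flow, Proposition \ref{prop_blow_up_analysis}, the two matching identities, smallness of $\sup_M K$), but the final contradiction is extracted from a different constraint. The paper tests the blow-up against the sign constraint built into $X$: expanding the \emph{quadratic} quantity $r_{u_m}<0$ via Lemma \ref{lem_interactions} gives $0>r_{u_m}=(\tfrac{k_\infty}{r_\infty})^{\frac{n-2}{2}}\bigl(r_{u_\infty}+\sum_i c_0(4n(n-1))^{\frac{n-2}{2}}K(a_i)^{-\frac{n-2}{2}}\bigr)+o(1)$, and the lower bound on $r_{u_\infty}$ comes from $J(u_\infty/\Vert u_\infty\Vert_{L^{2^*}})\geq\inf_X J$, so the threshold is expressed through $\inf_X J$; moreover the zero-weak-limit case never enters, since positivity preservation along the flow (Proposition \ref{prop_flow_properties}(iii)) excludes it a priori. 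You instead test against the volume normalization $\Vert u_m\Vert_{L^{2^*}}=1$, which requires the nonlinear $L^{2^*}$-expansion with cross-term control (your reduction to $\int u_\infty^{a}\varphi_i^{b}\to0$ for $0<b<2^*$ via the $L^\infty$-bound on $u_\infty$ is the right way to do this), and you bound $\alpha_\infty$ below by $\inf_X(-k)/\sup_X(-r)$ from Lemma \ref{lem_implications_of_A_B_inequality_on_J}; you also dispose of the $u_\infty\equiv0$ cases directly by the sign of $k$ rather than by flow positivity, which makes your argument slightly more robust (it only uses the Palais--Smale structure) at the cost of a somewhat longer expansion than the paper's quadratic one. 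Both routes give an explicit threshold depending only on the A-B-constants. One cosmetic caveat: your $c_0=\int_{\R^n}(1+|x|^2)^{-n}dx$ is the paper's $c_1$, while the paper reserves $c_0$ for $4n(n-1)c_1$ (the constant appearing in the expansion of $r$), so the notation should be aligned if this is inserted.
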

\begin{proof}
If blow-up occurs for $J$ along a flow line for a sequence in time,
i.e. for some
$u_{m}=u_{t_{m}}\in X$, then from Proposition \ref{prop_blow_up_analysis}
and Lemma \ref{lem_interactions}
\begin{equation}\label{r_on_X_no_blow_up}
\begin{split}
0>r_{u_{m}}
= & \;
\int L_{g_{0}}u_{m}u_{m}d\mu_{g_{0}}
=
\alpha_{\infty}^{2}r_{u_{\infty}}
+
c_{0}\sum_{i}\alpha_{i}^{2}
+
o(1) \\
= \; &
(\frac{k_{\infty}}{r_{\infty}})^{\frac{n-2}{2}}
(r_{u_{\infty}}+\sum_{i}
\frac{c_{0}(4n(n-1))^{\frac{n-2}{2}}}{K^{\frac{n-2}{2}}(a_{i})})
+o(1)
.
\end{split}
\end{equation}
On the other hand  $r_{u_{\infty}}<0$ by lower semicontinuity, whence for suitable $\alpha>0$
$$
\alpha u_{\infty} \in X
\; \text{ and } \;
0<\inf_{X}J\leq J(\alpha u_{\infty})
=
\frac{-k_{u_{\infty}}}{(-r_{u_{\infty}})^{\frac{n}{n-2}}}
=
(-r_{u_{\infty}})^{-\frac{2}{n}}.
$$
Thus
$r_{u_{\infty}}>- (\inf_{X}J)^{-\frac{n}{2}}$ in contradiction to
\eqref{r_on_X_no_blow_up} for $0<\sup_M K \ll 1$.
\end{proof}
\begin{remark}
The required smallness of $\sup_M K>0$ in Lemma \ref{lem_no_blow_up_for_J}
is determined by $\inf_{X}J$. On the other hand,
since we assume the validity of an A-B-inequality,
this infimum is lower bounded as follows. From
$$
1=\Vert u \Vert_{L^{\frac{2n}{n-2}}}
\leq Ar + B\vert k \vert^{\frac{n-2}{n}}
\leq B\vert k \vert^{\frac{n-2}{n}}
$$
we find
$$J=\frac{-k}{(-r)^{\frac{n}{n-2}}}\geq (-Br)^{-\frac{n}{n-2}},\ \text{while}\ r\geq -\Vert u \Vert_{L^{2}}^{2}\geq -c\Vert u \Vert_{L^{\frac{2n}{n-2}}}^{\frac{n-2}{n}}=-c$$
and so
$\inf_{X}J\geq \gamma B^{-\frac{n}{n-2}}$, $\gamma=\gamma(M)$.
\end{remark}

Similarly from Proposition \ref{prop_blow_up_analysis}
we have the following rough energy estimate.
\begin{proposition}\label{prop_rough_energy}
Suppose, that an A-B-inequality holds on $H^1(M)$.
Then for $0<\sup_M K$  sufficiently small, if a flow line for $I$ along a sequence in time
blows up as
$u=\alpha u_{\infty}+\alpha^{i}\varphi_{i}+v$,
there holds
$$
I(u)\longrightarrow
(
E(u_{\infty})
+
c_{0}\sum (\frac{4n(n-1)}{K(a_{i_{\infty}})})^{\frac{n-2}{2}}
)^{\frac{2}{n}},
$$
where
\begin{enumerate}[label=(\roman*)]
\item $E(u_{\infty})=-J^{-\frac{n-2}{2}}(u_{\infty}/\Vert u_{\infty} \Vert_{L^{\frac{2n}{n-2}}})$,
if $u_{\infty}/\Vert u_{\infty} \Vert_{L^{\frac{2n}{n-2}}}\in X$
\item $E(u_{\infty})=I^{\frac{n}{2}}(u_{\infty}/\Vert u_{\infty} \Vert_{L^{\frac{2n}{n-2}}})$,
if $u_{\infty}/\Vert u_{\infty} \Vert_{L^{\frac{2n}{n-2}}}\in Y$.
\end{enumerate}

\end{proposition}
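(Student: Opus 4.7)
The plan is to combine the blow-up decomposition of Proposition \ref{prop_blow_up_analysis} with the bubble integral expansions of Lemma \ref{lem_interactions}, and then use the algebraic constraints on the limiting coefficients $\alpha_\infty,\alpha_{i_\infty}$ to rewrite $\lim r_m/k_m^{(n-2)/n}$ in the form claimed.

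First I would apply Proposition \ref{prop_blow_up_analysis} to the blow-up sequence $u_m$, obtaining the decomposition $u_m=\alpha_m u_\infty+\sum_i \alpha_{i,m}\varphi_{a_{i,m},\lambda_{i,m}}+v_m$ with $\|v_m\|\to 0$, $\varepsilon_{i,j}\to 0$ between distinct bubbles, $r_m\to r_\infty>0$, $k_m\to k_\infty>0$, together with the algebraic identities $r_\infty\alpha_\infty^{4/(n-2)}/k_\infty=1$ (when $u_\infty\not\equiv 0$) and $r_\infty K(a_{i_\infty})\alpha_{i_\infty}^{4/(n-2)}/(4n(n-1)k_\infty)=1$. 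Note also that testing $L_{g_0} u_\infty=K u_\infty^{(n+2)/(n-2)}$ against $u_\infty$ yields $r_{u_\infty}=k_{u_\infty}$.

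Next I would expand $r_m=\int L_{g_0} u_m \cdot u_m \, d\mu_{g_0}$ by bilinearity. The cross terms $\int L_{g_0} u_\infty\cdot \varphi_i$ and $\int L_{g_0} \varphi_i\cdot \varphi_j$ for $i\ne j$ vanish in the limit by Lemma \ref{lem_interactions}(iii),(v),(vi) together with $\lambda_{i,m}\to\infty$ and $\varepsilon_{i,j}\to 0$; the terms involving $v_m$ vanish by $\|v_m\|\to 0$; and each diagonal bubble term $\int L_{g_0}\varphi_i\cdot \varphi_i$ is computed via Lemma \ref{lem_L_g_0_of_bubble} as $4n(n-1)\int \varphi_i^{2n/(n-2)}+o(1)=c_0+o(1)$. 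This yields $r_\infty=\alpha_\infty^2 r_{u_\infty}+c_0\sum \alpha_{i_\infty}^2$. A parallel analysis of $k_m$, localizing $K$ to its value at each concentration point, gives $k_\infty=\alpha_\infty^{2n/(n-2)} k_{u_\infty}+(c_0/(4n(n-1)))\sum K(a_{i_\infty})\alpha_{i_\infty}^{2n/(n-2)}$.

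Substituting $\alpha_\infty^2=(k_\infty/r_\infty)^{(n-2)/2}$ and $\alpha_{i_\infty}^2=(k_\infty/r_\infty)^{(n-2)/2}(4n(n-1)/K(a_{i_\infty}))^{(n-2)/2}$ into the expression for $r_\infty$ factors out $(k_\infty/r_\infty)^{(n-2)/2}$ and rearranges to $r_\infty^{n/2}/k_\infty^{(n-2)/2}=r_{u_\infty}+c_0\sum (4n(n-1)/K(a_{i_\infty}))^{(n-2)/2}$, which is precisely $\lim I(u_m)^{n/2}$. It remains to identify $r_{u_\infty}$ with $E(u_\infty)$: using $r_{u_\infty}=k_{u_\infty}$, direct computation gives $-J^{-(n-2)/2}(\bar u_\infty)=r_{u_\infty}$ when $\bar u_\infty\in X$ and $I^{n/2}(\bar u_\infty)=r_{u_\infty}$ when $\bar u_\infty\in Y$; the degenerate case $u_\infty\equiv 0$ is handled by $E(0)=0$ with $\alpha_\infty=0$ and no constraint on $\alpha_\infty$ in Proposition \ref{prop_blow_up_analysis}. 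The principal obstacle is a purely bookkeeping one, namely to carefully control all self-, cross-, and bubble-tower interactions in the expansions of $r_m$ and $k_m$ so that they contribute only $o(1)$, which rests on the sharp decay rates of Lemma \ref{lem_interactions} played against $\|v_m\|_{H^1}\to 0$, $\varepsilon_{i,j}\to 0$, and $\lambda_{i,m}\to\infty$.
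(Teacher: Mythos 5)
Your proposal is correct and follows essentially the same route as the paper: expand $r_m$ and $k_m$ via Proposition \ref{prop_blow_up_analysis} and Lemmata \ref{lem_L_g_0_of_bubble}, \ref{lem_interactions}, substitute the algebraic constraints on $\alpha_\infty,\alpha_{i_\infty}$, and identify $E(u_\infty)$ through $r_{u_\infty}=k_{u_\infty}$. The only (cosmetic) difference is that you feed the constraints into the $r_\infty$-expansion alone and then use $I^{n/2}=r^{n/2}/k^{(n-2)/2}$, whereas the paper substitutes into the full ratio and cancels via $c_0=4n(n-1)c_1$; both yield the stated limit.
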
	
\begin{proof}
From Proposition \ref{prop_blow_up_analysis} we find
\begin{equation*}
\begin{split}
I(u)
= \; &
\frac{\alpha^{2}\int L_{g_{0}}u_{\infty}u_{\infty}d\mu_{g_{0}} + c_{0}\sum \alpha_{i}^{2} }
{
(
\alpha^{\frac{2n}{n-2}}\int Ku_{\infty}^{\frac{2n}{n-2}}d\mu_{g_{0}}
+
c_{1}K(a_{i})\alpha_{i}^{\frac{2n}{n-2}})^{\frac{n-2}{n}}
}
+
o(1).
\end{split}
\end{equation*}
where
\begin{equation}\label{constants_c_1_c_0}
c_{1}=\int_{\R^{n}}\frac{dx}{(1+r^{2})^{n}}
\; \text{ and } \;
c_{0}=4n(n-1)c_{1},
\end{equation}
cf. Lemmata \ref{lem_L_g_0_of_bubble}, \ref{lem_interactions}. As
$L_{g_{0}}u_{\infty}=Ku_{\infty}^{\frac{n+2}{n-2}}$, see Proposition \ref{prop_blow_up_analysis},
we may write
\begin{equation*}
\begin{split}
I(u)
= \; &
\frac{\alpha^{2}k_{u_{\infty}} + c_{0}\sum \alpha_{i}^{2} }
{
(
\alpha^{\frac{2n}{n-2}}k_{u_{\infty}}
+
c_{1} \sum K(a_{i})\alpha_{i}^{\frac{2n}{n-2}})^{\frac{n-2}{n}}
}
+
o(1)
\end{split}
\end{equation*}
and obtain, again from Proposition \ref{prop_blow_up_analysis},
\begin{equation*}
\begin{split}
I(u)
= \; &
\frac{
(\frac{k_{\infty}}{r_{\infty}})^{\frac{n-2}{2}}k_{u_{\infty}}
+
c_{0}\sum (\frac{4n(n-1)k_{\infty}}{r_{\infty}K(a_{i_{\infty}})})^{\frac{n-2}{2}}
}
{
(
(\frac{k_{\infty}}{r_{\infty}})^{\frac{n}{2}}k_{u_{\infty}}
+
c_{1}\sum K(a_{i_{\infty}})(\frac{4n(n-1)k_{\infty}}{r_{\infty}K(a_{i_{\infty}})})^{\frac{n}{2}})^{\frac{n-2}{n}}
}
+
o(1) \\
= \; &
\frac{
k_{u_{\infty}}
+
c_{0}\sum (\frac{4n(n-1)}{K(a_{i_{\infty}})})^{\frac{n-2}{2}}
}
{
(
k_{u_{\infty}}
+
c_{1}\sum K(a_{i_{\infty}})(\frac{4n(n-1)}{K(a_{i_{\infty}})})^{\frac{n}{2}})^{\frac{n-2}{n}}
}
+
o(1),
\end{split}
\end{equation*}
which due to $c_{0}=4n(n-1)c_{1}$ simplifies to
$$
I(u)
=
(
k_{u_{\infty}}
+
c_{0}\sum (\frac{4n(n-1)}{K(a_{i_{\infty}})})^{\frac{n-2}{2}}
)^{\frac{2}{n}}
+
o(1).
$$
Note, that
$r_{u_{\infty}}=k_{u_{\infty}}$ and thus
$u_{\infty}/\Vert u_{\infty} \Vert_{L^{\frac{2n}{n-2}}}$ is either in $X$ or $Y$, whence
\begin{enumerate}[label=(\roman*)]
\item $J(u_{\infty}/\Vert u_{\infty} \Vert_{L^{\frac{2n}{n-2}}})=(-k_{u_{\infty}})^{-\frac{2}{n-2}}$,
if 	$k_{u_{\infty}}<0$, i.e. $u_{\infty}/\Vert u_{\infty} \Vert_{L^{\frac{2n}{n-2}}} \in X$
\item
$I(u_{\infty}/\Vert u_{\infty} \Vert_{L^{\frac{2n}{n-2}}})=k_{u_{\infty}}^{\frac{2}{n}}$,
if $k_{u_{\infty}} >0$, i.e. $u_{\infty}/\Vert u_{\infty} \Vert_{L^{\frac{2n}{n-2}}} \in Y$.
\end{enumerate}
The proof is complete.
\end{proof}
Note,
that from Proposition \ref{prop_rough_energy} evidently the least possible blow-up energy $I_{\infty}$ for $I$
occurs in the single bubbling case
$u=\alpha_{\infty}u_{\infty}+\alpha \varphi_{a,\lambda}+v$,
and, when $K(a)=\max K>0$.
On the other hand,
in order to show the existence of a second solution besides a global minimizer
$$u_{0}\in X
\; \text{ with } \;
\inf_{X}J=\min_{X}J=J(u_{0}),
$$
which by Theorem \ref{thm_minimize_J} exists, we may argue by contradiction and assume,
that $u_{0}$ is the only
solution to
$$L_{g_{0}}u=\frac{1}{\beta} Ku^{\frac{n+2}{n-2}}, \beta>0.$$
Then in return by
Proposition \ref{prop_blow_up_analysis} the only possible simple
blow-up scenario for $I$ is a mixed bubbling of type
$
u=\alpha u_{0}+\alpha_{1}\varphi_{a_{1},\lambda_{1}}+v$,
where
$u_{\infty}=u_{0}$.
\begin{proposition}\label{prop_below_I_infty}
Under the assumptions of Theorem \ref{thm_second_solution} for suitable
$$a\in \{ K=\max K \},$$
choices $\alpha_{0},\alpha_{1}>0$ and
$u=\alpha_{0}u_{0}+\alpha_{1}\varphi_{a,\lambda}$
there holds
$$
I(u/\Vert u \Vert_{L^{\frac{2n}{n-2}}})< I_{\infty}
,
$$
where $I_{\infty}$ denotes the least possible blow-up energy of $I$.
\end{proposition}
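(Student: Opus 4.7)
The plan is to construct an explicit trial function of the form $v_\lambda = u_0 + \alpha_\star \varphi_{a,\lambda}$ and show that $I(v_\lambda) < I_\infty$ at order $\lambda^{-(n-2)/2}$ with a strictly negative coefficient proportional to $u_0(a)$. Fix $a \in \{K = \max K\}$ satisfying $Cond_n$ and the vanishing hypothesis, and set $p = 2n/(n-2)$ together with $\alpha_\star = (4n(n-1)/K(a))^{(n-2)/4}$; this is the unique value dictated by the critical-point-at-infinity identities in Proposition \ref{prop_blow_up_analysis}, equivalent to $\alpha_\star^{p-2}K(a) = 4n(n-1)$. By Lemmas \ref{lem_L_g_0_of_bubble} and \ref{lem_interactions} the self-actions combine to produce a common value $E := k_{u_0} + c_0(4n(n-1)/K(a))^{(n-2)/2}$ for both $r_{v_\lambda}$ and $k_{v_\lambda}$ as $\lambda \to \infty$, thereby matching the formula $I_\infty = E^{2/n}$ of Proposition \ref{prop_rough_energy}. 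The smallness of $\sup K$ combined with the $A$--$B$-inequality bound on $|k_{u_0}|$ ensures $E > 0$, so that $v_\lambda/\Vert v_\lambda\Vert_{L^{2^*}} \in Y$ for all sufficiently large $\lambda$.

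Next, I extract the $\lambda^{-(n-2)/2}$-order correction, which comes entirely from the $u_0$-$\varphi$ interactions. Using self-adjointness of $L_{g_0}$, the equation $L_{g_0}u_0 = Ku_0^{(n+2)/(n-2)}$, Lemma \ref{lem_L_g_0_of_bubble}, and the sharp concentration of $\varphi^{(n+2)/(n-2)}$,
\begin{equation*}
\int_M u_0\,L_{g_0}\varphi_{a,\lambda}\,d\mu_{g_0} = 4n(n-1)\,b_1\,u_0(a)\,\lambda^{-(n-2)/2} + o(\lambda^{-(n-2)/2}),
\end{equation*}
and analogously $\int_M Ku_0\varphi^{p-1}d\mu_{g_0} = K(a)b_1 u_0(a)\lambda^{-(n-2)/2} + o(\lambda^{-(n-2)/2})$. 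Binomially expanding $k_{v_\lambda}$, keeping the two cross terms of order $\lambda^{-(n-2)/2}$, and applying the critical-point identity $\alpha_\star\cdot 4n(n-1) = \alpha_\star^{p-1}K(a)$ gives
\begin{equation*}
r_{v_\lambda} = E + 2\alpha_\star\cdot 4n(n-1)\,b_1 u_0(a)\,\lambda^{-(n-2)/2} + o(\lambda^{-(n-2)/2}),
\end{equation*}
\begin{equation*}
k_{v_\lambda} = E + 2p\alpha_\star\cdot 4n(n-1)\,b_1 u_0(a)\,\lambda^{-(n-2)/2} + o(\lambda^{-(n-2)/2}).
\end{equation*}
Taylor expanding $r/k^{(n-2)/n}$ at $E$ and using $(n-2)p/n = 2$ produces a clean cancellation and leaves
\begin{equation*}
I(v_\lambda) = I_\infty - \frac{2\alpha_\star\cdot 4n(n-1)\,b_1 u_0(a)}{E^{(n-2)/n}}\,\lambda^{-(n-2)/2} + o(\lambda^{-(n-2)/2}).
\end{equation*}
Since $u_0 > 0$ strictly by the weak Harnack inequality appearing in the proof of Theorem \ref{thm_minimize_J}, in particular $u_0(a) > 0$, this principal correction is strictly negative, giving $I(v_\lambda) < I_\infty$ for $\lambda$ sufficiently large.

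The main technical obstacle is verifying rigorously that every discarded term is truly $o(\lambda^{-(n-2)/2})$. Three categories of error must be handled: (a) the residual $L_{g_0}\varphi - 4n(n-1)\varphi^{(n+2)/(n-2)}$, which is $o(\lambda^{-(n-2)/2})$ in $W^{-1,2}$ precisely in the $Cond_n$ regime by Lemma \ref{lem_L_g_0_of_bubble}; (b) the Taylor remainder $\int_M(K - K(a))\varphi^p d\mu_{g_0}$, of size $O(\lambda^{-\lceil n/2\rceil})$ under the vanishing condition $\nabla^l K(a) = 0$ for $l \le (n-2)/2$, hence strictly smaller than $\lambda^{-(n-2)/2}$; and (c) the middle binomial terms of $(u_0 + \alpha_\star\varphi)^p$ beyond the two cross terms, controlled by the pointwise inequality $|(a+b)^p - a^p - b^p - pa^{p-1}b - pab^{p-1}| \lesssim a^{p-2}b^2 + a^2 b^{p-2}$, whose integration against $K$ yields $O(\lambda^{-n/2}\log\lambda)$ or better, depending on the dimension. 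The dimensional dependence of these bounds is precisely what dictates the form of $Cond_n$ and the vanishing hypothesis in Theorem \ref{thm_second_solution}, consistent with the table following that statement.
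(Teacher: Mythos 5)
Your strategy coincides with the paper's: the test function $u_0+\alpha_\star\varphi_{a,\lambda}$ with the balanced choice $\alpha_\star^{4/(n-2)}=4n(n-1)/K(a)$, expansion of $r$ and $k$ to order $\lambda^{-(n-2)/2}$, and comparison with the mixed single-bubble blow-up threshold of Proposition \ref{prop_rough_energy}; your final expansion of $I(v_\lambda)$ is also numerically correct. However, one intermediate step is false as stated: the asymptotic $\int_M u_0\,L_{g_0}\varphi_{a,\lambda}\,d\mu_{g_0}=4n(n-1)\,b_1\,u_0(a)\,\lambda^{-(n-2)/2}+o(\lambda^{-(n-2)/2})$ does not hold and does not follow from Lemma \ref{lem_L_g_0_of_bubble}. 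The error $O(\chi_{B_{2\epsilon}(a)\setminus B_{\epsilon}(a)}\lambda^{-\frac{n-2}{2}})$ in that lemma is negligible only when tested against concentrating functions (as in the remark after the lemma, where it is paired with $\theta_{a,\lambda}$); paired with the fixed positive function $u_0$ it contributes at the full order $\lambda^{-(n-2)/2}$. Equivalently, by self-adjointness $\int u_0L_{g_0}\varphi_{a,\lambda}=\int Ku_0^{\frac{n+2}{n-2}}\varphi_{a,\lambda}$, and the linear pairing of a bubble against a fixed smooth function has exact order $\lambda^{-(n-2)/2}$ with a \emph{nonlocal} coefficient (essentially $4n(n-1)\omega_n\int Ku_0^{\frac{n+2}{n-2}}\eta_a u_a G_a\,d\mu_{g_0}$ plus cut-off contributions), not the local value $4n(n-1)b_1u_0(a)$. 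Consequently your displayed formulas for $r_{v_\lambda}$ and $k_{v_\lambda}$ are each wrong at the very order you are working at, and point (a) of your error discussion is incorrect.

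Your conclusion survives only because of a cancellation that your write-up does not exhibit: setting $T=\int u_0L_{g_0}\varphi_{a,\lambda}$ and $p=\frac{2n}{n-2}$, the cross term enters the numerator as $2\alpha_\star T$ and the denominator base as $p\alpha_\star T$, and since $\frac{n-2}{n}p=2$ while your choice of $\alpha_\star$ makes the leading numerator and denominator both equal to $E$, the $T$-contributions cancel identically in $r/k^{\frac{n-2}{n}}$; what remains is exactly $-2\alpha_\star^{p-1}K(a)b_1u_0(a)E^{-\frac{n-2}{n}}\lambda^{-\frac{n-2}{2}}=-2\alpha_\star\,4n(n-1)b_1u_0(a)E^{-\frac{n-2}{n}}\lambda^{-\frac{n-2}{2}}$, coming from the genuinely expandable integral $\int Ku_0\varphi_{a,\lambda}^{\frac{n+2}{n-2}}$, whose kernel concentrates. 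This is precisely the device of the paper, which never expands $\int L_{g_0}u_0\varphi_{a_1,\lambda_1}$ and instead chooses $\alpha_0,\alpha_1$ so that the bracket multiplying it vanishes. So repair your argument by carrying $T$ symbolically (only $T=O(\lambda^{-\frac{n-2}{2}})$ is needed) and verifying this cancellation; with that change your proof becomes correct and is essentially the paper's. A secondary point: your remainder bound in (c), in the additive form $a^{p-2}b^2+a^2b^{p-2}$, only yields $O(\lambda^{-2})$ after integration, which is not $o(\lambda^{-\frac{n-2}{2}})$ for $n\geq 6$; you need the region-split (minimum) version of that inequality, which gives $O(\lambda^{-\frac{n}{2}})$ up to logarithms and suffices in all dimensions.
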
	
In particular Theorem \ref{thm_second_solution} follows,
since by Proposition \ref{prop_below_I_infty}
we may consider a minimizing sequence
$(u_{k})\subset\{ I<I_{\infty} \}$,
which by virtue of Proposition \ref{prop_blow_up_analysis} then leads to a minimizing sequence
$(w_{k})\subset\{ I<I_{\infty} \}$ of solutions $\partial I(w_{k})=0$,
which again by Proposition \ref{prop_blow_up_analysis} converges to a minimizer of $I$.
\begin{proof}[Proof of Proposition \ref{prop_below_I_infty}]
We consider
$$u=\gamma(\alpha_{0} u_{0}+\alpha_{1}\varphi_{a_{1},\lambda_{1}})\in Y$$
for a suitable choice of $\gamma>0$ and have
\begin{equation*}
\begin{split}
I(u)
= \; &
\frac{
\int L_{g_{0}}(\alpha_{0}u_{0}+\alpha_{1}\varphi_{a_{1},\lambda_{1}})
(\alpha_{0}u_{0}+\alpha_{1}\varphi_{a_{1},\lambda_{1}})d\mu_{g_{0}}
}
{(\int K(\alpha_{0}u_{0}+\alpha_{1}\varphi_{a_{1},\lambda_{1}})^{\frac{2n}{n-2}}d\mu_{g_{0}})^{\frac{n-2}{n}}}
=
\frac{N}{D}.
\end{split}
\end{equation*}
We first consider the case
$3\leq n \leq 5$.
From Lemma \ref{lem_L_g_0_of_bubble} we then find
\begin{equation*}
\begin{split}
N
= \; &
\alpha_{0}^{2}r_{u_{0}}
+
2\alpha_{0}\alpha_{1} \int L_{g_{0}}u_{0}\varphi_{a_{1},\lambda_{1}}
+
\alpha_{1}^{2}\int L_{g_{0}}\varphi_{a_{1},\lambda_{1}}\varphi_{a_{1},\lambda_{1}}
d\mu_{g_{0}} \\
= \; &
\alpha_{0}^{2}r_{u_{0}}
+
2\alpha_{0}\alpha_{1} \int L_{g_{0}}u_{0}\varphi_{a_{1},\lambda_{1}}
+
4n(n-1)\alpha_{1}^{2}\int \varphi_{a_{1},\lambda_{1}}^{\frac{2n}{n-2}}d\mu_{g_{0}}
+
o(\frac{1}{\lambda_{1}^{\frac{n-2}{2}}})
\end{split}
\end{equation*}
and thus from \eqref{Expansion_Green_Function}-\eqref{Bubble_Definition}  by expansion
\begin{equation*}
\begin{split}
N
= \; &
\alpha_{0}^{2}r_{u_{0}}
+
2\alpha_{0}\alpha_{1} \int L_{g_{0}}u_{0}\varphi_{a_{1},\lambda_{1}}
+
c_{0}\alpha_{1}^{2}
+
o(\frac{1}{\lambda_{1}^{\frac{n-2}{2}}}),
\end{split}
\end{equation*}
where $c_{0}=4n(n-1)c_{1}$, cf. \eqref{constants_c_1_c_0}.
On the other hand
$$
L_{g_{0}}u_{0}=\frac{1}{\beta}Ku_{0}^{\frac{n+2}{n-2}}
\; \text{ for some } \;
\beta>0
$$
and again  we derive by expansion
up to some $o(\frac{1}{\lambda_{1}^{\frac{n-2}{2}}})$
\begin{equation}\label{estimate_on_D}
\begin{split}
D^{\frac{n}{n-2}}
= \; &
\int K
(
\alpha_{0}^{\frac{2n}{n-2}}u_{0}^{\frac{2n}{n-2}}
+
\alpha_{1}^{\frac{2n}{n-2}}\varphi_{a_{1},\lambda_{1}}^{\frac{2n}{n-2}} \\
& \quad\quad\;\;+
\frac{2n\alpha_{1}\alpha_{0}^{\frac{n+2}{n-2}}}{n-2}u_{0}^{\frac{n+2}{n-2}}\varphi_{a_{1},\lambda_{1}}
+
\frac{2n\alpha_{0}\alpha_{1}^{\frac{n+2}{n-2}}}{n-2}\varphi_{a_{1},\lambda_{1}}^{\frac{n+2}{n-2}}u_{0}
)
d\mu_{g_{0}} \\
= \; &
\beta\alpha_{0}^{\frac{2n}{n-2}}r_{u_{0}}
+
c_{1}\alpha_{1}^{\frac{2n}{n-2}}(K(a_{1})+O(\frac{1}{\lambda_{1}^{2}}))
+
\frac{2nc_{4}}{n-2}\alpha_{0}\alpha_{1}^{\frac{n+2}{n-2}}\frac{u_{0}(a_{1})}{\lambda_{1}^{\frac{n-2}{2}}}
\\
& \; +
\frac{2n}{n-2}\beta\alpha_{1}\alpha_{0}^{\frac{n+2}{n-2}}\int L_{g_{0}}u_{0}\varphi_{a_{1},\lambda_{1}}d\mu_{g_{0}}
,
\end{split}
\end{equation}
where
$c_{4}=\int_{\R^{n}}\frac{dx}{(1+r^{2})^{\frac{n+2}{2}}}$.
Moreover
\begin{equation*}%\label{estimate_low_dimension}
\frac{1}{\lambda_{1}^{2}}=o(\frac{1}{\lambda_{1}^{\frac{n-2}{2}}}),
\; \text{ as }
\; 3\leq n \leq 5,
\end{equation*}
and, since
$\int L_{g_{0}}u_{0}\varphi_{a_{1},\lambda_{1}}d\mu_{g_{0}}=O(\frac{1}{\lambda_{1}^{\frac{n-2}{2}}})$,
we obtain up to some $o(\frac{1}{\lambda_{1}^{\frac{n-2}{2}}})$
\begin{equation*}%\label{I_expansion}
\begin{split}
I(u)
= \; &
\frac{
\alpha_{0}^{2}r_{u_{0}}
+
c_{0}\alpha_{1}^{2}
}
{
(\beta\alpha_{0}^{\frac{2n}{n-2}}r_{u_{0}}
+
c_{1}K(a_{1})\alpha_{1}^{\frac{2n}{n-2}})^{\frac{n-2}{n}}
}
\\
& -
\frac{
2c_{4}
(
\alpha_{0}^{2}r_{u_{0}}
+
c_{0}\alpha_{1}^{2}
)
\alpha_{0}\alpha_{1}^{\frac{n+2}{n-2}}
}
{
(\beta\alpha_{0}^{\frac{2n}{n-2}}r_{u_{0}}
+
c_{1}K(a_{1})\alpha_{1}^{\frac{2n}{n-2}})^{\frac{n-2}{n}+1}
}
\frac{u_{0}(a_{1})}{\lambda_{1}^{\frac{n-2}{2}}}
\\
& +
\frac
{
2\alpha_{0}\alpha_{1}\int L_{g_{0}}u_{0}\varphi_{a_{1},\lambda_{1}}d\mu_{g_{0}}
}
{(\beta\alpha_{0}^{\frac{2n}{n-2}}r_{u_{0}}
+
c_{1}K(a_{1})\alpha_{1}^{\frac{2n}{n-2}})^{\frac{n-2}{n}}
}
[1-\frac{
(
\alpha_{0}^{2}r_{u_{0}}
+
c_{0}\alpha_{1}^{2}
)
\alpha_{0}^{\frac{4}{n-2}}\beta
}
{
\beta\alpha_{0}^{\frac{2n}{n-2}}r_{u_{0}}
+
c_{1}K(a_{1})\alpha_{1}^{\frac{2n}{n-2}}
}
].
\end{split}
\end{equation*}
Choosing e.g.
$$
\alpha_{0}^{\frac{4}{n-2}}=1/\beta
\; \text{ and } \;
\alpha_{1}^{\frac{4}{n-2}}=\frac{c_{0}}{c_{1}K(a_{1})}=\frac{4n(n-1)}{K(a_{1})},
$$
the latter summand vanishes and, as $u_{0}>0$, we obtain
\begin{equation*}
\begin{split}
I(u)
= \; &
(\beta^{-\frac{n-2}{2}}	r_{u_{0}}
+
c_{0}(\frac{4n(n-1)}{K(a_{i})})^{\frac{n-2}{2}}
)^{\frac{2}{n}}
-
O^{+}(\frac{1}{\lambda_{1}^{\frac{n-2}{2}}}).
\end{split}
\end{equation*}
Recalling  $L_{g_{0}}u_{0}=\frac{1}{\beta}Ku_{0}^{\frac{n+2}{n-2}}$ and thus
\begin{equation*}
\begin{split}
J(\frac{u_{0}}{\Vert u_{0} \Vert_{L^{\frac{2n}{n-2}}}})
= \; &
\frac{-k_{u_{0}}}{(-r_{u_{0}})^{\frac{n}{n-2}}}
=
\frac{-\beta r_{u_{0}}}{(-r_{u_{0}})^{\frac{n}{n-2}}}
=
\frac{\beta}{(-r_{u_{0}})^{\frac{2}{n-2}}}
=
(-\beta^{-\frac{n-2}{2}}r_{u_{0}})^{-\frac{2}{n-2}},
\end{split}
\end{equation*}
we finally derive
\begin{equation*}
\begin{split}
I(u)
= \; &
(-J^{-\frac{n-2}{2}}(u_{0})+c_{0}(\frac{4n(n-1)}{K(a_{1})}))^{\frac{2}{n}}
-
O^{+}(\frac{1}{\lambda_{1}^{\frac{n-2}{2}}}).
\end{split}
\end{equation*}
Recalling Proposition \ref{prop_rough_energy} (i), the assertion follows for $\lambda \gg 1$
in case $3\leq n\leq 5$. The case $n\geq 6$ with local flatness around $a\in \{ K=\max K \} $
and
$$
\nabla^{l}K(a)=0 \; \text{ for all } \; \frac{n-2}{2} \geq l \in \N
$$
then follows as when $3\leq n \leq 5$ upon replacing in \eqref{estimate_on_D}
$$O(\frac{1}{\lambda^{2}}) \; \text{ by } \; O(\frac{1}{\lambda^{l+1}}),$$
noticing $\frac{1}{\lambda^{l+1}}=o_{\frac{1}{\lambda}}(\frac{1}{\lambda^{\frac{n-2}{2}}})$
and recalling $H_{s,a}\equiv 0$, when expanding.
\end{proof}

\begin{remark}\label{rem_critcal_points_at_infinity}
The argument of the proof reflects the question of existence or non existence
of mixed type critical points at infinity \cite{Bahri_High_Dimensions},
depending on the usual non degeneracy and flatness conditions.
Although here pure type critical points at infinity do not exist,
mixed type ones will generally occur,
provided of course, that classical solutions exist in the first place.
\end{remark}	

\section{Appendix}
First note, that we may write any
$
u
=
\tilde{\alpha} u_{\infty}
+
\tilde{\alpha}^{i}\varphi_{a_{i},\lambda_{i}}
+
\tilde{v}
\in
V(u_{\infty},q,\varepsilon_2)
$
as
\begin{equation*}
u=\alpha u_{\infty}+\alpha^{i}\varphi_{ a_i,\lambda_i}+v
\; \text{ with } \;
v \perp_{L_{g_{0}}}span\{u_{\infty},\varphi_{ a_i,\lambda_i}\}
\end{equation*}
by solving in the $(\alpha,\alpha_{i})$-variables the linear system
\begin{equation*}\label{alpha_alpha_i_system}
\left\{\begin{matrix*}[c]
\quad
\langle u, u_\infty\rangle_{ L_{g_0}} \;\;\;\;\,
& = &
\alpha\langle u_\infty, u_\infty\rangle_{L_{g_0}} \;\;\;\;\,
& + &
\alpha^i\langle \varphi_{a_i, \lambda_i}, u_\infty\rangle_{L_{g_0}} \;\;\;\;\,
\\
\quad
\langle u, \varphi_{a_1, \lambda_1}\rangle_{L_{g_0}}
& = &
\alpha\langle u_\infty, \varphi_{a_1,\lambda_1}\rangle_{L_{g_0}}
& + &
\alpha^i\langle \varphi_{a_i, \lambda_i}, \varphi_{a_1, \lambda_1}\rangle_{L_{g_0}}
\\
\quad
\vdots &&\vdots&& \vdots
\\
\quad
\langle u, \varphi_{a_q, \lambda_q}\rangle_{L_{g_0}}
& = &
\alpha\langle u_\infty, \varphi_{a_q, \lambda_q}\rangle_{L_{g_0}}
& + &
\alpha^i\langle \varphi_{a_i, \lambda_i}, \varphi_{a_q, \lambda_q}\rangle_{L_{g_0}}
\end{matrix*}
\right.	
\end{equation*}
which due to
$
\vert \langle u_\infty, u_\infty\rangle_{L_{g_0}} \vert, \langle \varphi_{a_i,\lambda_i}, \varphi_{a_i, \lambda_i}\rangle_{L_{g_0}}
\geq
c>0
$
and
\begin{equation*}%\label{alpha_alpha_i_system_off_diagonal_entries}
\langle \varphi_{a_i, \lambda_i}, u_\infty\rangle_{L_{g_0}}, \langle L_{g_0}\varphi_{a_i, \lambda_i}, \varphi_{a_j, \lambda_j}\rangle_{L_{g_0}}
=
o_{\varepsilon_2}(1)
\; \text{ for } \;
j\neq i
\end{equation*}
clearly admits a unique solution $(\alpha, \alpha_i)$,
whose dependence on $(a_{i},\lambda_{i})$ we clarify as follows.
From the above let us write some \textit{fixed} $u\in V(u_{\infty},q,\varepsilon_2)$ as
$$
u=\alpha u_{\infty}+\alpha^{i}\varphi_{a_i, \lambda_i}+v,\; v \perp_{L_{g_{0}}} span\{ u_{\infty},\varphi_{a_i, \lambda_i} \}.
$$
Varying in $(a_{i},\lambda_{i})$ this representation of $u$, due to $\langle v,u_{\infty} \rangle_{L_{g_{0}}}=0$ we have
\begin{equation}\label{alpha_derivative_tested}
\begin{split}
0
= \; &
\lambda_{i}\partial_{\lambda_{i}}
\langle u,u_{\infty} \rangle_{L_{g_{0}}}
=
\lambda_{i}\partial_{\lambda_{i}}
\langle \alpha u_{\infty}+\alpha^{j}\varphi_{a_j, \lambda_j}	,u_{\infty} \rangle_{L_{g_{0}}} \\
= \; &
\lambda_{i}\partial_{\lambda_{i}}\alpha \langle u_{\infty},u_{\infty} \rangle_{L_{g_{0}}} \\
&  +
\lambda_{i}\partial_{\lambda_{i}}\alpha^{j}
\langle \varphi_{a_{j},\lambda_{j}},u_{\infty}\rangle_{L_{g_{0}}}
+
\alpha_{i}\langle \lambda_{i}\partial_{\lambda_{i}}\varphi_{a_{i},\lambda_{i}},u_{\infty}
\rangle_{L_{g_{0}}},
\end{split}
\end{equation}
whence
\begin{equation}\label{alpha_derivative_estimated}
\lambda_{i}\partial_{\lambda_{i}}\alpha \langle u_{\infty},u_{\infty} \rangle_{L_{g_{0}}}
=
o_{\varepsilon_2}(\sum_{i,j}\vert \lambda_{i}\partial_{\lambda_{i}}\alpha_{j} \vert )
+
O(\frac{1}{\lambda_{i}^{\frac{n-2}{2}}}).
\end{equation}
Likewise for $i\neq j$ and due to
$\langle v,\varphi_{a_{j},\lambda_{j}}\rangle_{L_{g_{0}}}=0$
\begin{equation}\label{alpha_i_mixed_derivative_tested}
\begin{split}
0
= \; &
\lambda_{i}\partial_{\lambda_{i}}
\langle u,\varphi_{a_{j},\lambda_{j}}\rangle_{L_{g_{0}}}
=
\lambda_{i}\partial_{\lambda_{i}}
\langle \alpha u_{\infty}+\alpha^{k}\varphi_{a_k, \lambda_k},
\varphi_{a_{j},\lambda_{j}} \rangle_{L_{g_{0}}} \\
= \; &
\lambda_{i}\partial_{\lambda_{i}}\alpha_{j}
\langle \varphi_{a_{j},\lambda_{j}},\varphi_{a_{j},\lambda_{j}}\rangle_{L_{g_{0}}}
+
\alpha_{i}\langle \lambda_{i}\partial_{\lambda_{i}}\varphi_{a_{i},\lambda_{i}},\varphi_{a_{j},\lambda_{j}} \rangle_{L_{g_{0}}}
 \\
& +
\lambda_{i}\partial_{\lambda_{i}}\alpha
\langle u_{\infty},\varphi_{a_{j},\lambda_{j}} \rangle_{L_{g_{0}}}
+
\sum_{j\neq k=1}^{q} \lambda_{i}\partial_{\lambda_{i}}\alpha _{k}
\langle \varphi_{a_{k},\lambda_{k}},\varphi_{a_{j},\lambda_{j}} \rangle_{L_{g_{0}}},
\end{split}
\end{equation}
whence
\begin{equation}\label{alpha_i_mixed_derivative_estimated}
\begin{split}
\lambda_{i}\partial_{\lambda_{i}}\alpha_{j}
\langle \varphi_{a_{j},\lambda_{j}},\varphi_{a_{j},\lambda_{j}}\rangle_{L_{g_{0}}}
=
o_{\varepsilon_2}
(\vert \lambda_{i}\partial_{\lambda_{i}}\alpha  \vert
+
\sum_{j\neq k=1}^{q}\vert \lambda_{i}\partial_{\lambda_{i}}\alpha_{k} \vert
)
+
O(\varepsilon_{i,j})
,
\end{split}
\end{equation}
cf. Lemma \ref{lem_interactions}.
Similarly we compute on one hand
\begin{equation*}
\begin{split}
\lambda_{i}\partial_{\lambda_{i}}\langle u,\varphi_{a_i, \lambda_i} \rangle_{L_{g_{0}}}
= \; &
\langle u,\lambda_{i}\partial_{\lambda_{i}} \varphi_{a_i, \lambda_i}\rangle_{L_{g_{0}}}\\
=\; &
\langle \alpha u_{\infty}+\alpha^{j}\varphi_{a_j, \lambda_j}+v,
\lambda_{i}\partial_{\lambda_{i}} \varphi_{a_i, \lambda_i}\rangle_{L_{g_{0}}},
\end{split}
\end{equation*}
while on the other due $\langle v , \varphi_{a_i, \lambda_i} \rangle_{L_{g_{0}}}=0$
\begin{equation*}
\begin{split}
\lambda_{i}\partial_{\lambda_{i}} & \langle u  ,  \varphi_{a_i, \lambda_i} \rangle_{L_{g_{0}}}
= \;
\lambda_{i}\partial_{\lambda_{i}}
\langle
\alpha u_{\infty}+\alpha^{j}\varphi_{a_j, \lambda_j},
\varphi_{a_i, \lambda_i} \rangle_{L_{g_{0}}}  \\
= \; &
\langle \alpha u_{\infty} + \alpha^{j}\varphi_{a_{j},\lambda_{j}},\lambda_{i}\partial_{\lambda_{i}}\varphi_{a_{i},\lambda_{i}}\rangle_{L_{g_{0}}} \\
& +
\lambda_{i}\partial_{\lambda_{i}}\alpha_{i} \langle \varphi_{a_i, \lambda_i},\varphi_{a_i, \lambda_i}\rangle_{L_{g_{0}}}
+
\alpha_{i}\langle \lambda_{i}\partial_{\lambda_{i}}\varphi_{a_{i},\lambda_{i}},\varphi_{a_{i},\lambda_{i}}\rangle_{L_{g_{0}}}
\\
& +
\lambda_{i}\partial_{\lambda_{i}}\alpha
\langle u_{\infty},\varphi_{a_{i},\lambda_{i}}\rangle_{L_{g_{0}}}
+
\sum_{i\neq j=1}^{q}\lambda_{i}\partial_{\lambda_{i}}\alpha_{j}
\langle \varphi_{a_{j},\lambda_{j}},\varphi_{a_{i},\lambda_{i}}
\rangle_{L_{g_{0}}},
\end{split}
\end{equation*}
and so we conclude by subtracting
\begin{equation}\label{alpha_i_derivative_tested}
\begin{split}
\lambda_{i}\partial_{\lambda_{i}}\alpha_{i}
&
\langle \varphi_{a_i, \lambda_i}  ,  \varphi_{a_i, \lambda_i}\rangle_{L_{g_{0}}} \\
= \; &-
\lambda_{i}\partial_{\lambda_{i}}\alpha
\langle u_{\infty},\varphi_{a_{i},\lambda_{i}}\rangle_{L_{g_{0}}}
-
\sum_{i\neq j=1}^{q}\lambda_{i}\partial_{\lambda_{i}}\alpha_{j}
\langle \varphi_{a_{j},\lambda_{j}},\varphi_{a_{i},\lambda_{i}}
\rangle_{L_{g_{0}}}\\
& \;
-
\alpha_{i}\langle \varphi_{a_{i},\lambda_{i}},\lambda_{i}\partial_{\lambda_{i}}\varphi_{a_{i},\lambda_{i}}\rangle_{L_{g_{0}}}
+
\langle v,\lambda_{i}\partial_{\lambda_{i}}\varphi_{a_{i},\lambda_{i}}
\rangle_{L_{g_{0}}},
\end{split}	
\end{equation}
which we estimate as
\begin{equation}\label{alpha_i_derivative_estimated}
\begin{split}
\lambda_{i}\partial_{\lambda_{i}}\alpha_{i}
\langle \varphi_{a_i, \lambda_i} , & \varphi_{a_i, \lambda_i}\rangle_{L_{g_{0}}}
=
o_{\varepsilon_2}(\vert \lambda_{i}\partial_{\lambda_{i}}\alpha \vert
+
\sum_{i\neq j =1}^{q}\vert \lambda_{i}\partial_{\lambda_{i}}\alpha_{j} \vert )\\
& +
\langle v,\lambda_{i}\partial_{\lambda_{i}}\varphi_{a_{i},\lambda_{i}}
\rangle_{L_{g_{0}}}
-
\alpha_{i}\langle \varphi_{a_{i},\lambda_{i}},\lambda_{i}\partial_{\lambda_{i}}\varphi_{a_{i},\lambda_{i}}\rangle_{L_{g_{0}}}.
\end{split}
\end{equation}
Since
$$
\langle u_{\infty},u_{\infty}\rangle_{L_{g_{0}}}=r_{u_{\infty}}\neq 0
\; \text{ and } \;
\langle \varphi_{a_i, \lambda_i},\varphi_{a_i, \lambda_i}\rangle_{L_{g_{0}}} \neq o(1),
$$
we then find from
\eqref{alpha_derivative_estimated},
\eqref{alpha_i_mixed_derivative_estimated}
and \eqref{alpha_i_derivative_estimated}
\begin{equation}\label{alpha_and_alpha_i_derivatives_estimate}
\begin{split}
\sum_{i}
(
\vert \lambda_{i}\partial_{\lambda_{i}}\alpha \vert
+
\sum_{j}\vert \lambda_{i}\partial_{\lambda_{i}}\alpha_{j} \vert
)
= \; &
O(\sum_{i}\frac{1}{\lambda_{i}^{\frac{n-2}{2}}}+\sum_{i\neq j}\varepsilon_{i,j} + \Vert v \Vert) \\
& +
O(\sum_{i}
\vert
\langle \varphi_{a_{i},\lambda_{i}},\lambda_{i}\partial_{\lambda_{i}}\varphi_{a_{i},\lambda_{i}}\rangle_{L_{g_{0}}} \vert
) \\
= \; &
o_{\varepsilon_2}(1)
\end{split}
\end{equation}
and therefore, if $Cond_{n}$ holds at all $a_{i}, \, i=1,\ldots,q$,
then
\begin{equation}\label{alpha_and_alpha_i_derivatives_estimate_with_flatness}
\begin{split}
\sum_{i}
(
\vert \lambda_{i}\partial_{\lambda_{i}}\alpha \vert
+
\sum_{j}\vert \lambda_{i}\partial_{\lambda_{i}}\alpha_{j} \vert
)
= \; &
O(\sum_{i}\frac{1}{\lambda_{i}^{\frac{n-2}{2}}}+\sum_{i\neq j}\varepsilon_{i,j} + \Vert v \Vert),
\end{split}
\end{equation}
as follows from Lemma \ref{lem_L_g_0_of_bubble}, see Lemma \ref{lem_interactions} (iv).
Finally and by the same reasoning \eqref{alpha_and_alpha_i_derivatives_estimate} and \eqref{alpha_and_alpha_i_derivatives_estimate_with_flatness}
hold for $\lambda_{i}\partial_{\lambda_{i}}$ replaced by $\sfrac{\nabla_{a_{i}}}{\lambda_{i}}$.

 \

With this in mind the proof of Lemma \ref{lem_optimal_choice} decomposes into three steps, first showing that
the infimum is attained in the interior, secondly that the resulting minimum is unique and finally justifying the
estimates yielding  sufficient \textit{almost}-orthogonalities for the $v$-part.

\begin{proof}[\textbf{Proof of Lemma \ref{lem_optimal_choice}}]
Consider some fixed
$$
u=\hat{\alpha}u_{\infty}
+
\hat{\alpha}^{i}\varphi_{\hat{a}_{i},\hat{\lambda}_{i}}+\hat{v}
\in V(u_\infty, q,\varepsilon_{2}).
$$
Then at some $(\alpha,\alpha_{i},\lambda_{i},a_{i})\in A_{u}(u_{\infty},q,2 \varepsilon_{1})$
\begin{equation*}
\begin{split}
\inf_
{
(
\tilde a_{i}, \tilde\lambda_{i})
\in \Pi_{(a_i, \lambda_i)}A_{u}(u_{\infty},q,2\varepsilon_{1})
}
\int u^{\frac{4}{n-2}}\vert u- \tilde{\alpha }u_{\infty}-\tilde{\alpha}^{i}\varphi_{\tilde{a}_{i},\tilde{\lambda}_{i}} \vert^{2} d\mu_{g_{0}}
\end{split}
\end{equation*}
is attained and, since $\Vert \hat{v} \Vert\leq \varepsilon_{2}$, there holds
\begin{equation*}
\begin{split}
o_{\varepsilon_{2}}(1)
=  &
\int u^{\frac{4}{n-2}}\vert u- \alpha  u_{\infty}-\alpha^{i}\varphi_{a_{i},\lambda_{i}} \vert^{2}d\mu_{g_{0}}
\\
= \; &
o_{\varepsilon_{2}}(1)
+
\int
\vert \hat{\alpha}u_{\infty}
+
\hat{\alpha}^{i}\varphi_{\hat{a}_{i},\hat{\lambda_{i}}}
\vert^{\frac{4}{n-2}}
\\  & \quad\quad\quad\quad\quad\quad\quad\quad \;
(
\hat{\alpha}u_{\infty} -\alpha u_{\infty}
+
\hat{\alpha}^{i}\varphi_{\hat{a}_{i},\hat{\lambda}_{i}}
-
\alpha^{j}\varphi_{a_{j},\lambda_{j}}
)^2d\mu_{g_{0}}
,
\end{split}	
\end{equation*}
whence after possibly relabelling the indices $j=1,\ldots,q$ for
\begin{equation*}
\begin{split}
&A=\vert \alpha	-\hat{\alpha}\vert,
\, A_i=\vert \alpha_i-\hat{\alpha}_i\vert,
\,
L_i=\vert\frac{\hat{\lambda}_i}{\lambda_i}-1\vert,
\,
D_i=\sqrt{\lambda_i\hat{\lambda}_i d_{g_{0}}^2(a_i, \hat{a}_i)}
\end{split}
\end{equation*}
we necessarily have
\begin{equation}\label{minimizer_difference_estimate}
A+\sum_{i}(A_{i}+L_{i}+D_{i})=o_{\varepsilon_{2}}(1).
\end{equation}
In particular, since
\begin{equation}\label{v_hat_v_difference}
v=u-\alpha u_{\infty}-\alpha^{i}\varphi_{a_{i},\lambda_{i}}
=
\hat{\alpha} u_{\infty} - \alpha u_{\infty}
+
\hat{\alpha}^{i}\varphi_{\hat{a}_{i},\hat{\lambda}_{i}}
-
\alpha^{i}\varphi_{a_{i},\lambda_{i}}
+
\hat{v},
\end{equation}
we find $\Vert v \Vert = o_{\varepsilon_{2}}(1)$ and thus for
$\varepsilon_{2}$ sufficiently small with $o_{\varepsilon_{2}}(1) < \varepsilon_{1}$, that
$$
(\alpha,\alpha_{i},\lambda_{i},a_{i})
\in
A_{u}(u_{\infty},q,o_{\varepsilon_{2}}(1))
\subset
A_{u}(u_{\infty},q,\varepsilon_{1})
\subset
A_{u}(u_{\infty},q,2\varepsilon_{1})
,
$$
i.e. the infimum is attained as an interior minimum.
To show uniqueness as a critical point of an interior minimizer, assume there are two, say
\begin{equation}\label{two_representations_of_u}
u=
\hat{\alpha}u_{\infty}
+
\hat{\alpha}^{i}\varphi_{\hat{a}_{i},\hat{\lambda_{i}}}+\hat{v}
=
\alpha u_{\infty}+\alpha^{i}\varphi_{a_i, \lambda_i}+v
\in
V(u_{\infty},q,\varepsilon_{1}).
\end{equation}
Then \eqref{minimizer_difference_estimate} holds, by construction we have
$$
\hat{v}\perp_{L_{g_{0}}} span\{u_{\infty},\varphi_{\hat{a}_{i},\hat{\lambda}_{i}}\},
\;
v\perp_{L_{g_{0}}} span \{ u_{\infty},\varphi_{a_{i},\lambda_{i}} \}
$$
and due to minimality,
taking the derivatives in $(\hat{\lambda}_{i},\hat{a}_{i})$ or $(\lambda_{i},a_{i})$,
\begin{equation}\label{lambda_hat_lambda_orthogonalities}
\begin{split}
0
= \; &
\int u^{\frac{4}{n-2}}\hat{v}\hat{\lambda}_{i}\partial_{\hat{\lambda}_{i}}(\hat{\alpha} u_{\infty}+\hat{\alpha}^{j}\varphi_{\hat{a}_{j},\hat{\lambda}_{j}})d\mu_{g_{0}} \\
= \; &
\int u^{\frac{4}{n-2}}v \lambda_{i}\partial_{\lambda_{i}}(\alpha u_{\infty}+\alpha^{j}\varphi_{a_{i},\lambda_{i}})d\mu_{g_{0}}
\end{split}	
\end{equation}
and
\begin{equation}\label{a_hat_a_orthogonalities}
\begin{split}
0
= \; &
\int u^{\frac{4}{n-2}}\hat{v}\frac{\nabla_{\hat{a}_{i}}}{\hat{\lambda}_{i}}(\hat{\alpha} u_{\infty}+\hat{\alpha}^{j}\varphi_{\hat{a}_{j},\hat{\lambda}_{j}})d\mu_{g_{0}} \\
= \; &
\int u^{\frac{4}{n-2}}v \frac{\nabla_{a_{i}}}{\lambda_{i}}(\alpha u_{\infty}+\alpha^{j}\varphi_{a_{j},\lambda_{j}})d\mu_{g_{0}}.
\end{split}
\end{equation}
Uniqueness then follows from
\begin{equation}\label{uniqueness_condition}
A+\sum_{i}(A_i+L_i+D_i)=o_{\varepsilon_{1}}(A+\sum_{i}( A_i+L_i+D_i)).
\end{equation}
To show \eqref{uniqueness_condition}, we start with using
$\langle \hat{v},u_{\infty}\rangle_{L_{g_{0}}}
=0=
\langle v,u_{\infty}\rangle_{L_{g_{0}}}=0$
to get
\begin{equation}\label{alpha_difference_estimate}
\begin{split}
0
= \; &
\int L_{g_0}u_\infty \hat{v}d\mu_{g_{0}}
=
\int L_{g_0}u_\infty
(u-\hat{\alpha}u_\infty-\hat{\alpha}^i\varphi_{\hat{a}_i, \hat{\lambda}_i})
d\mu_{g_{0}}  \\
= \; &
\int L_{g_0}u_\infty
(
\alpha u_\infty+\alpha^i\varphi_{a_i,\lambda_i}
-\hat{\alpha}u_\infty-\hat{\alpha}^i\varphi_{\hat{a}_i, \hat{\lambda}_i})
d\mu_{g_{0}}  \\
=
\; &
(\alpha-\hat{\alpha})\int L_{g_0}u_\infty u_\infty d\mu_{g_{0}}
+
o_{\varepsilon_{1}}(\sum_{i}( A_i+L_i+D_i)),
\end{split}
\end{equation}
and likewise from
$
\langle \hat{v},\varphi_{\hat{a}_{i},\hat{\lambda}_{i}}\rangle_{L_{g_{0}}}
=
0
=
\langle v,\varphi_{a_{i},\lambda_{i}}\rangle_{L_{g_{0}}}=0
$
we obtain
\begin{equation}\label{alpha_i_difference_estimate}
\begin{split}
0
= \; &
\int L_{g_0}\varphi_{\hat{a}_j, \hat{\lambda}_j}\hat{v} d\mu_{g_{0}}
=
\int L_{g_0}\varphi_{\hat{a}_j, \hat{\lambda}_j}
(u-\hat{\alpha}u_\infty-\hat{\alpha}^i\varphi_{\hat{a}_i, \hat{\lambda}_i}) d\mu_{g_{0}}
\\
= \; &
\int L_{g_0}\varphi_{\hat{a}_j, \hat{\lambda}_j}
(
\alpha u_\infty +\alpha^i\varphi_{a_i, \lambda_i}+v
-\hat{\alpha}u_\infty-\hat{\alpha}^i\varphi_{\hat{a}_i, \hat{\lambda}_i})d\mu_{g_{0}}
\\
= \; &
(\alpha_j-\hat{\alpha}_j)
\int L_{g_0}\varphi_{\hat{a}_j, \hat{\lambda}j}
\varphi_{\hat{a}_j, \hat{\lambda}_j}d\mu_{g_{0}}
\\
& +
\alpha_j\int L_{g_0}\varphi_{\hat{a}_j, \hat{\lambda}_j}
(\varphi_{a_j, \lambda_j}-\varphi_{\hat{a}_j, \hat{\lambda}_j})d\mu_{g_{0}} \\
& +
o_{\varepsilon_{1}}(A+\sum_{j\neq i=1}^{q} A_i+\sum_{i=1}^q( L_i+D_i))\\
= \; &
(\alpha_j-\hat{\alpha}_j)
\int L_{g_0}\varphi_{\hat{a}_j, \hat{\lambda}_j}\varphi_{\hat{a}_j,\hat{\lambda}_j}
d\mu_{g_{0}}
+
o_{\varepsilon_{1}}(A+\sum_{i}(A_i+ L_i+D_i)),
\end{split}
\end{equation}
where we made use of Lemma \ref{lem_interactions} and
to treat the term
$$\int L_{g_0}\varphi_{\hat{a}_j, \hat{\lambda}_j}(\varphi_{{a}_j, {\lambda}_j}-\varphi_{\hat{a}_j, \hat{\lambda}_j})d\mu_{g_{0}}$$
also
of \eqref{minimizer_difference_estimate} and
\begin{equation}\label{bubble_difference}
\begin{split}
\varphi_{\hat{a}_i, \hat{\lambda}_i}
-
\varphi_{a_i,\lambda_i}
= \; &
\begin{pmatrix} 
\frac{1}{\lambda_{i}}\nabla_{a_{i}} \\
\lambda_{i}\partial_{\lambda_{i}}
\end{pmatrix}
\varphi_{a_{i},\lambda_{i}}
\begin{pmatrix}
\lambda_i(\hat{a}_i-a_i) \\
\frac{\hat{\lambda}_i}{{\lambda}_i}-1
\end{pmatrix}
%& +
%\frac{1}{2}
%\begin{pmatrix}
%\frac{1}{\lambda_{i}^2}\nabla_{a_{i}}^{2} & \nabla_{a_{i}}\partial_{\lambda_{i}}
%\\
%\partial_{\lambda_{i}}\nabla_{a_{i}}   &  \lambda_{i}^2\partial_{\lambda_{i}^2}
%\end{pmatrix}
%\varphi_{a_{i},\lambda_{i}}
%\begin{pmatrix}
%\lambda_i(\bar{a}_i-\hat{a}_i ) \\
%\frac{\bar{\lambda}_i}{\hat{\lambda}_i}-1
%\end{pmatrix}^{2}
+
o_{L_{i}+D_{i}}(L_{i}+D_{i})
\end{split}
\end{equation}
as well as
\begin{equation*}%\label{inv}
\lambda\partial_\lambda\int_{\mathbb{R}^n}\delta_{a,\lambda}^{\frac{2n}{n-2}}
=
\frac1\lambda\nabla_a\int_{\mathbb{R}^n}\delta_{a,\lambda}^{\frac{2n}{n-2}}
=
0
\;\; \text{ for } \;\;
\delta_{a,\lambda}=(\frac{\lambda}{1+\lambda^{2}\vert \cdot-a \vert^{2} })^{\frac{n-2}{2}}.
\end{equation*}
Combining \eqref{alpha_difference_estimate} and \eqref{alpha_i_difference_estimate} we conclude
\begin{equation}\label{consm}
A + \sum_{i}A_{i} =  o_{\varepsilon_{1}}(A+\sum_{i} ( A_i+L_i+D_i)).
\end{equation}
We proceed by employing \eqref{lambda_hat_lambda_orthogonalities}.
First note, that due to \eqref{v_hat_v_difference}
\begin{equation}\label{v_difference_estimate}
\Vert v -\hat{v}\Vert
=
O(A+\sum_{i}( A_i+L_i+D_i)).
\end{equation}
Secondly \eqref{alpha_derivative_tested}, \eqref{alpha_i_mixed_derivative_tested}, \eqref{alpha_i_derivative_tested} and \eqref{alpha_and_alpha_i_derivatives_estimate}
hold for both representations of $u$ in \eqref{two_representations_of_u},
whence by subtracting the corresponding versions of \eqref{alpha_derivative_tested}, \eqref{alpha_i_mixed_derivative_tested} and \eqref{alpha_i_derivative_tested} we find
\begin{equation}\label{alpha_alpha_i_derivatives_difference}
\begin{split}
\sum_{i}
(\vert \lambda_{i}\partial_{\lambda_{i}}\alpha - \hat{\lambda}_{i}\partial_{\hat{\lambda}_{i}}\hat{\alpha} \vert
+
\sum_{j}
\vert
\lambda_{i}\partial_{\lambda_{i}}\alpha_{j}   &   - \hat{\lambda}_{i}\partial_{\hat{\lambda}_{i}}\hat{\alpha}_{j}
\vert
) \\
= \; &
O(A+\sum_{i}( A_i+L_i+D_i)).
\end{split}	
\end{equation}
Thus, when subtracting in \eqref{lambda_hat_lambda_orthogonalities}, the estimates
\eqref{alpha_and_alpha_i_derivatives_estimate}, \eqref{v_difference_estimate} and \eqref{alpha_alpha_i_derivatives_difference} yield
\begin{equation*}
\begin{split}
\int u^{\frac{4}{n-2}}
(
\alpha_{i}\lambda_{i}\partial_{\lambda_{i}}\varphi_{a_{i},\lambda_{i}} v
-
\hat{\alpha}_{i}\hat{\lambda}_{i}\partial_{\hat{\lambda}_{i}}\varphi_{\hat{a}_{i},\hat{\lambda}_{i}} \hat{v}
)d\mu_{g_{0}}
=
o_{\varepsilon_{1}}(A+\sum_{i}( A_i+L_i+D_i)),
\end{split}
\end{equation*}
whence, as $\vert \alpha_{i}-\hat{\alpha}_{i}\vert=A_{i}$ and $\Vert v \Vert,\Vert \hat{v} \Vert=o_{\varepsilon_{1}}(1)$,  and due to
$$
\Vert
\lambda_{i}\partial_{\lambda_{i}}\varphi_{a_{i},\lambda_{i}}
-
\hat{\lambda}_{i}\partial_{\hat{\lambda}_{i}}\varphi_{\hat{a}_{i},\hat{\lambda}_{i}}
\Vert
=
O(L_{i}+D_{i})
$$
we obtain
$$
\int u^{\frac{4}{n-2}}
\lambda_{i}\partial_{\lambda_{i}}\varphi_{a_{i},\lambda_{i}}(v-\hat{v})d\mu_{g_{0}}
=
o_{\varepsilon_{1}}(A+\sum_{i}( A_i+L_i+D_i)).
$$
Recalling \eqref{two_representations_of_u} and estimating as above, we then get from \eqref{bubble_difference}
\begin{equation*}
\begin{split}
o_{\varepsilon_{1}}(A   &   +\sum_{i}( A_i+L_i+D_i))
=
\int u^{\frac{4}{n-2}}
\lambda_{i}\partial_{\lambda_{i}}\varphi_{a_{i},\lambda_{i}}
(
\varphi_{\hat{a}_{i},\hat{\lambda}_{i}}
-
\varphi_{a_{i},\lambda_{i}}
) d\mu_{g_{0}}
\\
= \; &
(\frac{\hat{\lambda}_{i}}{\lambda_{i}}-1)
\int u^{\frac{4}{n-2}}
\vert \lambda_{i}\partial_{\lambda_{i}}\varphi_{a_{i},\lambda_{i}}\vert^{2}d\mu_{g_{0}}
+
o_{\varepsilon_{1}}(A+\sum_{i}( A_i+L_i+D_i)).
\end{split}
\end{equation*}
By by simple expansions of $u^{\frac{4}{n-2}}$ for $u=\alpha u_{\infty} + \alpha^{i}\varphi_{a_{i},\lambda_{i}}+v$ we thus obtain
\begin{equation*}
L_i=o_{\varepsilon_{1}}(A+\sum_{i} (A_i+L_i+D_i))
\end{equation*}
and by analogous arguments, employing \eqref{a_hat_a_orthogonalities} instead of \eqref{lambda_hat_lambda_orthogonalities}, also
\begin{equation*}
D_i=o_{\varepsilon_{1}}(A+\sum_{i} (A_i+L_i+D_i)).
\end{equation*}
Combining these estimates with \eqref{consm} establishes \eqref{uniqueness_condition} and therefore the desired uniqueness of an interior minimizer
as a critical point.
We finally turn to proving the \textit{almost}-orthogonalities (i) and (ii).
From \eqref{alpha_and_alpha_i_derivatives_estimate} and \eqref{lambda_hat_lambda_orthogonalities} we find
\begin{equation*}
\begin{split}
\int u^{\frac{4}{n-2}} & v \lambda_{i}\partial_{\lambda_{i}}\varphi_{a_{i},\lambda_{i}}d\mu_{g_{0}} \\
= \; &
O(\sum_{i}\frac{1}{\lambda_{i}^{n-2}}+\sum_{i\neq j}\varepsilon_{i,j}^{2} + \Vert v \Vert^{2})
+
O(\sum_{i}
\vert
\langle \varphi_{a_{i},\lambda_{i}},\lambda_{i}\partial_{\lambda_{i}}\varphi_{a_{i},\lambda_{i}}\rangle_{L_{g_{0}}} \vert^{2}
)
\end{split}	
\end{equation*}
and by simple expansions of $u^{\frac{4}{n-2}}$ for $u=\alpha u_{\infty} + \alpha^{i}\varphi_{a_{i},\lambda_{i}}+v$, that also
\begin{equation*}
\begin{split}
\int \varphi_{a_{i},\lambda_{i}}^{\frac{4}{n-2}} &  \lambda_{i}\partial_{\lambda_{i}}\varphi_{a_{i},\lambda_{i}} v d\mu_{g_{0}} \\
= \; &
O(\sum_{i}\frac{1}{\lambda_{i}^{n-2}}+\sum_{i\neq j}\varepsilon_{i,j}^{2} + \Vert v \Vert^{2})
+
O(\sum_{i}
\vert
\langle \varphi_{a_{i},\lambda_{i}},\lambda_{i}\partial_{\lambda_{i}}\varphi_{a_{i},\lambda_{i}}\rangle_{L_{g_{0}}} \vert^{2}
).
\end{split}	
\end{equation*}
Now assertion (i) of Lemma \ref{lem_L_g_0_of_bubble} follows from writing
\begin{equation*}
\begin{split}
\langle \lambda_{i}\partial_{\lambda_{i}}\varphi_{a_{i},\lambda_{i}},v\rangle_{L_{g_{0}}}
= \; &
4n(n-1)
\int \varphi_{a_{i},\lambda_{i}}^{\frac{4}{n-2}} \lambda_{i}\partial_{\lambda_{i}}\varphi_{a_{i},\lambda_{i}} v d\mu_{g_{0}} \\
& +
\int
(
\lambda_{i}\partial_{\lambda_{i}}L_{g_{0}}\varphi_{a_{i},\lambda_{i}}
-
4n(n-1)\lambda_{i}\partial_{\lambda_{i}}\varphi_{a_{i},\lambda_{i}}^{\frac{n+2}{n-2}}
)
v  d\mu_{g_{0}},
\end{split}
\end{equation*}
while assertion (ii) follows analogously, relying on \eqref{a_hat_a_orthogonalities} instead of
\eqref{lambda_hat_lambda_orthogonalities}.
\end{proof}

\extrafootertext{The authors have no conflict of interest to declare.
Data sharing is not applicable to this article as no datasets were generated or analysed during the current study.}

\end{document}